\newtheorem{theorem}{Theorem}[section]
\newtheorem{proposition}[theorem]{Proposition}
\newtheorem{lemma}[theorem]{Lemma}
\newtheorem{corollary}[theorem]{Corollary}
\newtheorem{remark}[theorem]{Remark}
\title[Maximal regularity in $\rm BMO$]
{Global-in-time maximal regularity for the Cauchy problem \\of the heat equation in $\rm BMO$ and applications}
\author{Xuan Thinh Duong}
\address{Xuan Thinh Duong, School of Mathematical and Physical Sciences, Macquarie University, Sydney, NSW 2019, Australia }
\email{xuan.duong@mq.edu.au}
\author{Ji Li}
\address{Ji Li, School of Mathematical and Physical Sciences, Macquarie University, NSW 2109, Australia}
\email{ji.li@mq.edu.au}
\author{Liangchuan Wu}
\address{Liangchuan Wu,  School of Mathematical Science, Anhui University, Hefei, 230601, P.R.~China 
and 
School of Mathematical and Physical Sciences, Macquarie University, NSW 2109, Australia}
\email{wuliangchuan@ahu.edu.cn}
\author{Lixin Yan}
\address{Lixin Yan, Department of Mathematics,  Sun Yat-sen University,  Guangzhou, 510275,  P.R.~China}
\email{mcsylx@mail.sysu.edu.cn}
\subjclass[2010]{Primary 42B35, 35K15;  Secondary 42B37}
\keywords{maximal regularity, Schr\"odinger operators,  Carleson estimates, Cauchy problem, BMO}
\begin{document}

\begin{abstract}
In this article, we establish global-in-time maximal regularity for the Cauchy problem of the classical heat equation $\partial_t u(x,t)-\Delta u(x,t)=f(x,t)$ with $u(x,0)=0$ in a certain $\rm BMO$ setting, which improves the local-in-time result initially proposed by Ogawa and Shimizu in \cite{OS, OS2}.
In further developing our method originally formulated for the heat equation, we obtain analogous global  ${\rm BMO}$-maximal regularity associated to the Schr\"odinger operator  $\mathcal L=-\Delta+V$, where the nonnegative potential $V$ belongs to the reverse H\"older class   ${\rm RH}_q$ for some $q> n/2$.  This extension includes several inhomogeneous estimates as ingredients, such as Carleson-type estimates for the external forces.

Our new methodology is to exploit elaborate heat kernel estimates, along with matched space-time decomposition on the involving integral-type structure of maximal operators, as well as some global techniques such as those from de Simon's work and Schur's lemma. One crucial trick is to utilize the mean oscillation therein to contribute a higher and necessary decay order for global-in-time estimates.
 \end{abstract}

\maketitle

\tableofcontents


\section{Introduction}\label{sec:1}
\setcounter{equation}{0}

The research of maximal regularity for parabolic evolution equations provides an important tool in the theory of nonlinear equations and has attracted lots of attention over the past decades. For example, applying maximal regularity results for the heat equation is a crucial ingredient of the well-posedness for MHD equations in the work \cite{FMRR} by Fefferman, et al. One may refer to the monograph \cite{PS} by Pr\"uss and Simonett, and references therein for this extensive field.
Typically, in the context of the Cauchy problem of the equation $\partial_t u (x,t)+Au(x,t)=f(x,t)$ with $u(x,0)=0$, where $A$ is a  densely closed operator on a Banach space $X$, we say $A$ has the maximal $L^p$-regularity, if for any $f\in L^p(I;X)$, the solution $u$ ensures  that both $\partial_t u$ and $Au$ belongs to $L^p(I;X)$, satisfying
$$
    \|\partial_t u\|_{L^p(I; X)} +\|Au\|_{L^p(I; X)} \leq C\|f\|_{L^p(I; X)},
$$
where $p\in (1,\infty)$ and $I=(0,T)$ for $0<T\leq \infty$. Specifically, the case where $X$ is a Hilbert space traces back to de Simon's work \cite{dS} in 1964, and notably, $X=L^{r}(\mathbb R^n)$ for $1<r<\infty$ represents a typical model.

Notably, when the  Lebesgue spaces $L^r(\mathbb R^n)$ are extended to Banach spaces satisfying the unconditional martingale differences (so-called ``UMD'', and a UMD space is necessarily reflexive), the theory of maximal regularities has been well established, one may refer to  \cite{A1995, A2019, D2021, DHP2003, HNVW, KW2004, W2001, Y2010}  and references therein for instance. One crucial instrument in this topic is the  $H^\infty-$functional calculus, developed by McIntosh and his collaborators (\cite{M86, ADM, CDMY}).

This article pursues the investigation of maximal regularity for the Cauchy problem of the  equation 
\begin{equation}\label{eqn:IVP-classical-heat}
    \begin{cases}
    	\partial_t u(x,t) -\Delta  u(x,t)=f(x,t),\quad (x,t)\in \mathbb R_+^{n+1},\\
    	u(x,0)=0,\quad x\in \mathbb R^n,
    \end{cases}
\end{equation}
in a class of bounded mean oscillation ($\rm BMO$),
where $f=f(x,t):\, \mathbb R^n\times \mathbb R_+\to \mathbb R$ is the  given external term. Recall that   a locally integrable function $g$ is in $\rm BMO(\mathbb R^n)$, the space of bounded mean oscillation, if 
$$
    \|g\|_{{\rm BMO}(\mathbb R^n)}:=\sup_{B=B(x_B,r_B)\subset \mathbb R^n}\left(r_B^{-n}\int_B \left|g(x)-g_B\right|^2  dx\right)^{1\over 2}<\infty,
$$ 
where $g_B$ denotes the average of $g$ over the ball $B$, i.e., $g_B=|B|^{-1} \int_B g(x) dx$. Observe that ${\rm BMO}$ is not UMD due to  its non-reflexivity:
$$
    \big({\rm CMO}(\mathbb R^n)\big)^{*}= H^1(\mathbb R^n)\quad \text{and}  \quad  (H^1 \mathbb R^n))^*={\rm BMO}(\mathbb R^n),
$$ 
see \cite{CW} by Coifman and Weiss,  and \cite{FS} by Fefferman and Stein, respectively. Here $H^1(\mathbb R^n)$ is the classical Hardy space, and ${\rm CMO}(\mathbb R^n)$, the space of vanishing mean oscillation,  using Uchiyama's notation in \cite{U}, defined as the closure of $C_c^{\infty}(\mathbb R^n)$ in the ${\rm BMO}(\mathbb R^n)$ norm, is a proper subspace of ${\rm BMO}(\mathbb R^n)$. Therefore maximal regularity in the $\rm BMO$ setting does not match the UMD framework and needs some new techniques.

Recently, Ogawa and Shimizu in \cite{OS2} (and their previous works \cite{OS}) established   local-in-time maximal regularity  in $\rm BMO$  for \eqref{eqn:IVP-classical-heat} that  
\begin{equation}\label{result-OS}
    \|\partial_t u\|_{\widetilde{L^2}(\mathbb R_+; {\rm BMO}(\mathbb R^n))} \leq C  \|f\|_{\widetilde{L^2}(\mathbb R_+; {\rm BMO}(\mathbb R^n))},
\end{equation}
where $\widetilde{L^2}(\mathbb R_+; {\rm BMO}(\mathbb R^n))$ is a substitution  for the Bochner class $L^2(\mathbb R_+; {\rm BMO}(\mathbb R^n))$, with the quasi-norm 
$$
      \|f\|_{\widetilde{L^2}(\mathbb R_+; {\rm BMO}(\mathbb R^n))}^2 :=\sup_{B=B(x_B, r_B)\subset \mathbb R^n}  \int_{0}^{r_B^2} \frac{1}{|B|^2}\int_{B\times B} \big|f(x,t)-f(y,t) \big|^2 dxdydt<\infty,
$$
where the time interval is constrained to the scale of $r_B^2$ for each specific ball $B$. This quasi-norm also has an equivalent expression
\begin{equation}\label{eqn:L2BMO}
   \|f\|_{\widetilde{L^2}(\mathbb R_+; {\rm BMO}(\mathbb R^n))}^2
  \approx  \sup_{B=B(x_B, r_B)\subset \mathbb R^n} r_B^{-n} \int_{0}^{r_B^2} \int_{B} \left|f(x,t)- \left(f(\cdot, t)\right)_{B} \right|^2 dxdt,
\end{equation}
as detailed in the beginning of Section \ref{sec:clasBMO}. One will see that this revised definition \eqref{eqn:L2BMO} and its global extension \eqref{def:L2BMO-new} below are more practical in our argument framework.

The above maximal regularity \eqref{result-OS} was derived  from
\begin{equation}\label{eqn:IVP-heat-I-classical}
    \big\| \mathcal M_{\Delta} (f)\big\|_{\widetilde{L^2}(\mathbb R_+;{\rm BMO})}\leq C \|f\|_{\widetilde{L^2}(\mathbb R_+;{\rm BMO})}
\end{equation}
for {\it maximal regularity operator} 
\begin{equation}\label{MRO}
    \mathcal M_{\Delta} (f)(x,t):=\int_0^t \partial_t e^{(t-s)\Delta}f(\cdot,s)(x)\,ds.
\end{equation}
Moreover, they also showed that 
$$
    \big\| \mathcal T_{\Delta} f\big\|_{\widetilde{L^2}(\mathbb R_+;{\rm BMO})}\leq C \|f\|_{\widetilde{L^1}(\mathbb R_+;{\rm BMO})}
$$
for the analogous regularity operator 
\begin{equation}\label{MRO2}
    \mathcal T_{\Delta} (f)(x,t):=\int_0^t \nabla_x e^{(t-s)\Delta}f(\cdot,s)(x)\,ds,
\end{equation}
where $\|f\|_{\widetilde{L^1}(\mathbb R_+;{\rm BMO})}$ is defined in a similar manner. By their approach, one may obtain a variant  that
\begin{equation}\label{eqn:IVP-heat-II-variant-classical}
    \left\| t^{-1/2} \mathcal T_{\Delta} f\right\|_{\widetilde{L^2}(\mathbb R_+;{\rm BMO})}\leq C \|f\|_{\widetilde{L^2}(\mathbb R_+;{\rm BMO})}.
\end{equation}

The method in \cite{OS2}  is intrinsically based on integration by parts which leads to estimates on $\nabla_x u$. 
It also applies the commutative property between the derivative $\nabla_x$ and the heat semigroup $e^{t\Delta}$.

Remarkably, it was noted in \cite[Section 1]{OS2} that it's impossible to establish maximal regularity for the heat equation \eqref{eqn:IVP-classical-heat} in $L^2((0,T);{\rm BMO})$ for $0<t\leq \infty$, from the perspective of the failure for the homogeneous estimate in $\rm BMO$. Alternatively, it also follows from the fact that the operator $-\Delta$ does not have maximal $L^p-$regularity on any space which contains $C_0(\mathbb R^n)$ isomorphically for any $p\in [1,+\infty]$; see \cite[p. 660]{HNVW}.

In this article, we will improve the local maximal-regularity \eqref{result-OS}  and the corresponding estimates \eqref{eqn:IVP-heat-I-classical} and \eqref{eqn:IVP-heat-II-variant-classical} to their global-in-time versions, respectively. Concretely,  we say a measurable function $f(x,t)$ is in ${\mathbb L}^2(\mathbb R_+;{\rm BMO}(\mathbb R^n))$, if 
\begin{equation}\label{def:L2BMO-new}
   \|f\|_{{\mathbb L}^2(\mathbb R_+;{\rm BMO}(\mathbb R^n))}^2  =   \sup_{B=B(x_B,r_B)\subset \mathbb R^n}   r_B^{-n}\int_{\mathbb R_+}  \int_B \big|f(x,t)- \left(f(\cdot, t)\right)_B\big|^2 dx dt  <\infty.
\end{equation}
Note that 
$$
 L^2(\mathbb R_+;{\rm BMO})\subset {\mathbb L}^2(\mathbb R_+;{\rm BMO}) \subset  \widetilde{L^2}(\mathbb R_+;{\rm BMO}),
$$
and the only slight difference between the (semi-)norms of ${\mathbb L}^2(\mathbb R_+;{\rm BMO})$ and $L^2(\mathbb R_+;{\rm BMO})$   lies in the order of taking the 
supremum and the time integration. Due to the failure of maximal regularity in $L^2(\mathbb R_+;{\rm BMO})$, our result about maximal regularity in   ${\mathbb L}^2(\mathbb R_+;{\rm BMO})$ (i.e., Theorem \ref{thm:endpoint-HE} below) is rather satisfactory.
Similarly, we can define $ \dot{\mathbb W}^{1,2}(\mathbb R_+; {\rm BMO}(\mathbb R^n))$ for $\partial_t f\in  {\mathbb L^2}(\mathbb R_+;{\rm BMO}(\mathbb R^n))$, and define $  {\mathbb L^2}(\mathbb R_+; \dot{{\rm BMO}}^2(\mathbb R^n))$ for $|\nabla_x|^2  f\in{\mathbb L}^2(\mathbb R_+;{\rm BMO}(\mathbb R^n))$, respectively, where $\dot{\rm BMO}^2(\mathbb R^n)=\left\{g \text{ is\  in\  tempered \ distribution}:  \   |\nabla_x|^2 g\in {\rm BMO} (\mathbb R^n)\right\}$. 

Our first main result is the following global maximal regularity for the Cauchy problem of the heat equation \eqref{eqn:IVP-classical-heat} in a certain $\rm BMO$ setting. 

\begin{theorem}\label{thm:endpoint-HE}   
For every $f\in {\mathbb L}^2 (\mathbb R_+;{\rm BMO}(\mathbb R^n))$,  we have
\begin{equation}\label{eqn:MaxReg-II-classical}
	    \big\|\mathcal M_{\Delta} f \big\|_{{\mathbb L}^2(\mathbb R_+; {\rm BMO})}\leq C \|f\|_{{\mathbb L}^2(\mathbb R_+; {\rm BMO})}
\end{equation} 
and
\begin{equation}\label{eqn:MaxReg-I-classical}
    \left\|t^{-1/2}\mathcal T_{\Delta} f \right\|_{{\mathbb L}^2(\mathbb R_+; {\rm BMO})}\leq C \|f\|_{{\mathbb L}^2(\mathbb R_+; {\rm BMO})}.
\end{equation}

As a consequence, for every $f\in  {\mathbb L}^2(\mathbb R_+;{\rm BMO}(\mathbb R^n))$,  the Cauchy problem of the heat equation \eqref{eqn:IVP-classical-heat}  admits a unique (up to constant) solution $u\in  \dot{\mathbb W}^{1,2}  (\mathbb R_+; {\rm BMO}(\mathbb R^n))\cap   {\mathbb  L^2}(\mathbb R_+; \dot{{\rm BMO}}^2(\mathbb R^n))$ which satisfies
\begin{equation}\label{eqn:MaxReg-III-classical}
       \left\|t^{-1/2}\partial_x u\right\|_{{\mathbb L}^2(\mathbb R_+; {\rm BMO})}+ 
       \left\|\partial_t u\right\|_{{\mathbb L}^2(\mathbb R_+; {\rm BMO})} \leq C\|f\|_{{\mathbb L}^2(\mathbb R_+; {\rm BMO})}.
\end{equation}
\end{theorem}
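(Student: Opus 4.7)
The overall plan is to establish the two operator estimates \eqref{eqn:MaxReg-II-classical} and \eqref{eqn:MaxReg-I-classical} directly, and then derive \eqref{eqn:MaxReg-III-classical} from the identities $\Delta u = \mathcal M_\Delta f$ and $\partial_t u = \mathcal M_\Delta f + f$, with uniqueness modulo constants following from the Duhamel formula together with a standard density argument. The two operator estimates have structurally identical proofs, so I focus on $\mathcal M_\Delta$: the case of $t^{-1/2}\mathcal T_\Delta$ is parallel because $\nabla_x p_\tau$ differs from $\partial_\tau p_\tau$ by one half-power of $\tau$, and the extra factor $t^{-1/2}$ exactly restores this balance.

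Fix a ball $B = B(x_B, r_B)$. By \eqref{def:L2BMO-new}, it suffices to show
$$
r_B^{-n}\int_0^{\infty}\!\!\int_B \bigl|\mathcal M_\Delta f(x,t) - (\mathcal M_\Delta f(\cdot,t))_B\bigr|^2\, dx\, dt \le C\|f\|_{{\mathbb L}^2(\mathbb R_+;{\rm BMO})}^2.
$$
Writing $K_\tau = \partial_\tau p_\tau$ and noting $\int K_\tau(x,y)\,dy \equiv 0$, I express the oscillation as
$$
\mathcal M_\Delta f(x,t) - \mathcal M_\Delta f(z,t) = \int_0^t\!\!\int_{\mathbb R^n} [K_{t-s}(x,y) - K_{t-s}(z,y)]\,[f(y,s) - c_s]\,dy\,ds
$$
for any $s$-measurable $c_s$, and split the $t$-integral into a local part $t \le 4 r_B^2$ and a global part $t > 4 r_B^2$. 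The local part essentially reproduces the argument of Ogawa--Shimizu: on the spatial region $4B$ I invoke de Simon's $L^2$ maximal regularity applied to the truncated source $(f - (f(\cdot,s))_{4B})\chi_{4B\times(0,4r_B^2)}$, which yields the right-hand side directly; on the complementary spatial region I use the kernel estimate $|K_\tau(x,y) - K_\tau(z,y)| \lesssim r_B\,\tau^{-(n+3)/2}\,e^{-c|x_B - y|^2/\tau}$ together with the vanishing-mean property to replace $f$ by a suitable oscillation.

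The main novelty is the global part $t > 4 r_B^2$. There I decompose the spatial integral dyadically into $E_0 = 4B$ and $E_k = 2^{k+2}B \setminus 2^{k+1}B$ for $k \ge 1$, and set
$$
F_k(s) := \Bigl(\int_{E_k} |f(y,s) - (f(\cdot,s))_{2^{k+2}B}|^2\,dy\Bigr)^{1/2}.
$$
Applying \eqref{def:L2BMO-new} to the ball $2^{k+2}B$ yields $\|F_k\|_{L^2(\mathbb R_+)} \lesssim (2^k r_B)^{n/2}\|f\|_{{\mathbb L}^2(\mathbb R_+;{\rm BMO})}$. Combining the kernel bound with Cauchy--Schwarz in $y$, the annulus-$k$ contribution $G_k(t)$ to $|\mathcal M_\Delta f(x,t) - (\mathcal M_\Delta f(\cdot,t))_B|$ is dominated by
$$
\int_0^t \frac{r_B\,(2^k r_B)^{n/2}}{(t-s)^{(n+3)/2}}\,\exp\!\Bigl(-\frac{c\,4^k r_B^2}{t-s}\Bigr)\,F_k(s)\,ds.
$$
Both Schur-test integrals $\sup_t\int_0^t(\cdot)\,ds$ and $\sup_s\int_s^{\infty}(\cdot)\,dt$ are evaluated via the substitution $u = 4^k r_B^2/(t-s)$ and yield the common factor $C\,2^{-k(n/2+1)}\,r_B^{-n/2}$, so Schur's lemma produces $\|G_k\|_{L^2(\mathbb R_+)} \lesssim 2^{-k}\,\|f\|_{{\mathbb L}^2(\mathbb R_+;{\rm BMO})}$. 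Summing the geometric series in $k$ and reinstating the $r_B^{-n}$ normalization then closes the estimate.

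The principal obstacle is that the ${\mathbb L}^2(\mathbb R_+;{\rm BMO})$-norm controls only the time-integrated mean oscillation and provides no pointwise-in-$s$ BMO bound. Consequently the natural strategy of inserting a different ``best constant'' $c_s^{(k)}$ on each annulus is not directly legitimate under a single $y$-integral, since $\int [K_\tau(x,\cdot) - K_\tau(z,\cdot)]\,dy = 0$ only permits subtracting a single $y$-independent constant. One must therefore either cut the kernel smoothly into $k$-localized pieces and pay for the resulting boundary terms by the Gaussian tail decay, or retain one global $c_s$ and absorb the inter-scale differences $(f(\cdot,s))_{2^{k+2}B} - (f(\cdot,s))_B$ using the time-integrated BMO norm combined with pointwise kernel decay. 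Extracting the extra decay order needed to make the sum over $k$ converge geometrically --- the ``higher and necessary decay order'' anticipated in the abstract --- is the delicate global step absent in the local-in-time framework of Ogawa--Shimizu.
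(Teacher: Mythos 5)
Your overall route matches the paper's proof of Theorem~\ref{thm:endpoint-HE}: both use de Simon's $L^2(\mathbb R^{n+1}_+)$ bound for $\mathcal M_\Delta$ on the near-$B$ contribution, the cancellation $\int K_\tau(x,y)\,dy=0$ (equivalently $e^{\tau\Delta}1=1$) to absorb the ball average $(f(\cdot,s))_{4B}$, and an extra half-power of kernel decay together with a dyadic annulus decomposition and Schur's lemma on the time convolution. The paper gains the extra decay by Poincar\'e's inequality, turning the output oscillation into $r_B\,\nabla_x\mathcal M_\Delta f_2$ and estimating $\nabla_x\Delta e^{\tau\Delta}$; you gain it via the mean-value bound $|K_\tau(x,y)-K_\tau(z,y)|\lesssim r_B\,\tau^{-(n+3)/2}e^{-c|x_B-y|^2/\tau}$. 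These are equivalent, and your Schur computation and scaling count are correct.

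However, there is a genuine gap that you flag but do not close. Your displayed estimate
\begin{align*}
G_k(t)\lesssim\int_0^t\frac{r_B\,(2^kr_B)^{n/2}}{(t-s)^{(n+3)/2}}\exp\!\Bigl(-\frac{c\,4^kr_B^2}{t-s}\Bigr)F_k(s)\,ds,
\end{align*}
paired with $\|F_k\|_{L^2(\mathbb R_+)}\lesssim(2^kr_B)^{n/2}\|f\|_{{\mathbb L}^2(\mathbb R_+;{\rm BMO})}$, presumes you subtract $(f(\cdot,s))_{2^{k+2}B}$ on the annulus $E_k$, i.e.\ a \emph{different} constant on each annulus inside a single $y$-integral. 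As you yourself observe, the identity $\int[K_\tau(x,\cdot)-K_\tau(z,\cdot)]\,dy=0$ permits subtracting only one $y$-independent constant $c_s$. Choosing $c_s=(f(\cdot,s))_{4B}$, on $E_k$ one must telescope $|(f(\cdot,s))_{2^{k+2}B}-(f(\cdot,s))_{4B}|\le\sum_{m}|(f(\cdot,s))_{2^{m+2}B}-(f(\cdot,s))_{2^{m+1}B}|$; after Cauchy--Schwarz in $m$ this introduces a factor growing polynomially in $k$. The geometric decay $2^{-k(n/2+1)}$ still absorbs it, so the strategy works, but the step is absent from your argument: your $\|G_k\|_{L^2}\lesssim 2^{-k}\|f\|$ is derived under the illegitimate annulus-wise subtraction and you list two ways around it without executing either. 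This is exactly the telescoping chain that the paper carries out explicitly in the estimates for $\mathbf J^{(1)}_{B,2}$, $\mathbf J^{(2,1)}_{B,2}$, $\mathbf J^{(2,2)}_{B,2}$; for the last they additionally introduce a split parameter $\theta\in(2,3)$ so that the Schur kernel and the telescoped sum $\mathbf{Avg}_B(s)$ each receive enough decay. To turn your proposal into a proof you should write out that telescoping and verify the $k$-sum still converges.
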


Compared with $\widetilde{L^2}(\mathbb R_+; {\rm BMO})$ in \eqref{eqn:L2BMO}, our  ${\mathbb L}^2(\mathbb R_+; {\rm BMO})$ in \eqref{def:L2BMO-new} extends the integral interval from a local scale to the entire $\mathbb R_+$. Thus our Theorem \ref{thm:endpoint-HE}  improves the work of Ogawa and Shimizu on maximal regularity for the heat equation to a global result. Moreover,  even at the local level, our approach of Theroem \ref{thm:endpoint-HE} can refine and improve their maximal regularity estimates \eqref{eqn:IVP-heat-I-classical} and \eqref{eqn:IVP-heat-II-variant-classical} into corresponding Carleson-type estimates, respectively; see Corollary \ref{coro:compare-1} for details.

Contrasting with integration by parts in \cite{OS2}, our argument for Theorem \ref{thm:endpoint-HE}  can be regarded as a certain local-global analysis. This primarily focuses on exploiting elaborate heat kernel estimates, along with matched space-time decomposition on the involving integral-type structure of maximal operators. Meanwhile, we apply some global techniques such as those from de Simon's work \cite{dS} on the $L^2(\mathbb R_+^{n+1})$ boundedness of $\mathcal M_{\Delta}$ and Schur's lemma, when the operators therein are resistant to deformation processing or cannot be localized effectively. Among these, one key observation is that the mean oscillation in the definition \eqref{def:L2BMO-new} of ${\mathbb L}^2(\mathbb R_+; {\rm BMO})$ can deduce an additional derivative via Poincar\'e's inequality, which will contribute a higher and necessary decay order for global-in-time estimates.
Notably,  we bypass the commutation between $\nabla_x$ and the heat semigroup. All of these ensure a broader applicability of our methodology to more general differential operators and related topics.

As an application, in the second part of this article, we are devoted to establishing an analog of global maximal regularity associated to Schr\"odinger operators. 
Let
\begin{equation*} 
    \mathcal{L}=-\Delta+V(x) \   \    { \rm on } \  \  L^{2}(\mathbb{R}^{n}), \quad n \geq 3,
\end{equation*}
where the nonnegative potential $V$ is not identically zero, and belongs to the reverse H\"older class ${\rm RH}_q$  for  some $q>n/2$, in the sense of 
\begin{equation}\label{eqn:Reverse-Holder}
    \left(\frac{1}{|B|} \int_{B} V(y)^{q} d y\right)^{1 / q} \leq \frac{C}{|B|} \int_{B} V(y)\, d y.
\end{equation}

Consider
 \begin{equation}\label{eqn:IVP-heat}
    \begin{cases}
    	\partial_t u +\mathcal L u=f,\quad (x,t)\in \mathbb R_+^{n+1},\\
    	u(\cdot,0)=0,\quad x\in \mathbb R^n.
    \end{cases}
\end{equation}
Since $-\mathcal L$ generates an analytic semigroup $\left\{e^{-t\mathcal L}\right\}_{t\geq 0}$, the solution of \eqref{eqn:IVP-heat} is given by the Duhamel formula
$$
    u(x, t)=\int_0^t e^{-(t-s)\mathcal L}f(\cdot,s)(x)\,ds,\quad t> 0.
$$
Note that the kernel of the heat semigroup $e^{-t\mathcal L}$ generated by $-\mathcal L$ satisfies Gaussian upper estimates (see Lemma~\ref{lem:heat-Schrodinger} below), consequently the work \cite{CD2000} by Coulhon and the first author of this article, within an abstract framework, yields that $\mathcal L$ has the maximal $L^p$ regularity property in $L^r(\mathbb R^n)$ for $1<p,r<\infty$. That is,
$$
   \|\partial_t u\|_{L^p(\mathbb R_+;L^r(\mathbb R^n))}   \leq C \|f\|_{L^p(\mathbb R_+; L^r(\mathbb R^n))}.
$$
It's natural to consider maximal regularity for \eqref{eqn:IVP-heat} in  a class of $\rm BMO$ associated to $\mathcal L$, as previously done for the classical heat equation.

We begin by introducing some necessary notation.
Recall that  $g$ belongs to  ${\rm BMO}_{{\mathcal{L}}}({\mathbb R}^n)$ (see \cite{DGMTZ}) if  $g$ is a locally integrable function and satisfies
\begin{align*}
     \|g\|_{{\rm BMO}_{\mathcal L}(\mathbb{R}^n)}^2  
    := &\max\left\{\sup_{B=B(x_B,r_B):\, r_B<\rho(x_B)} r_B^{-n} \int_{B}\left|g(y)-g_{B}\right|^2   d y, \right.  \\
    &\qquad \qquad   \left. \sup_{B=B(x_B,r_B):\, r_B\geq \rho(x_B)}  r_B^{-n}  \int_{B}|g(y)|^2\, d y\right\} <\infty.
\end{align*}
Here, the function $\rho(x)$,  introduced by Z.W. Shen \cite{Shen,Shen1},  is defined by
\begin{equation}\label{eqn:critical-funct}
 \rho(x)=\sup \left\{r>0: \frac{1}{r^{n-2}} \int_{B(x, r)} V(y)\, d y \leq 1\right\}.
\end{equation}
Note that this ${\rm BMO}_{\mathcal L}(\mathbb{R}^n)$ space is a proper subspace of the classical $\rm BMO$ space and when $V\equiv 1$, ${\rm BMO}_{-\Delta+1}$ is  the ${\rm bmo}$ space  introduced by Goldberg \cite{Go1}. For further characterizations and properties of ${\rm BMO}_{\mathcal L}$, one may refer to \cite{DY1, DY2} for details.

Similarly, the space ${\rm BMO}_{\mathcal L}$ does not  fit within the ``UMD'' frame since
 $$
     \left({\rm CMO}_{\mathcal L}(\mathbb R^n)\right)^{**}={\rm BMO}_{\mathcal L}(\mathbb R^n),
 $$
 where ${\rm CMO}_{\mathcal{L}}(\mathbb R^n)$ defined as the closure of $C_c^{\infty}(\mathbb R^n)$ in the ${\rm BMO}_{\mathcal{L}}(\mathbb R^n)$ norm is a proper subspace of ${\rm BMO}_{\mathcal L}(\mathbb R^n)$; see \cite{SW2022} by Song and the third author for more details. Fortunately, our argument for Theorem~\ref{thm:endpoint-HE} is applicable here.
 
We say a  function $f(x,t)$ belongs to ${\mathbb L}^2(\mathbb R_+;{\rm BMO}_{\mathcal L}(\mathbb R^n))$, if
\begin{align*}
  \|f\|_{{\mathbb L}^2(\mathbb R_+;{\rm BMO}_{\mathcal L})} ^2   :=&  \max\left\{ \sup_{B=B(x_B,r_B):\, r_B<\rho(x_B)}   r_B^{-n} \int_{\mathbb R_+} \int_B \left|f(x,t)- \left(f(\cdot, t)\right)_B \right|^2 dxdt,  \right.\nonumber\\ 
    & \qquad\quad   \left.  \sup_{B=B(x_B,r_B):\, r_B\geq \rho(x_B)} r_B^{-n}  \int_{\mathbb R_+}  \int_B |f(x,t)|^2dxdt\right\}
     <\infty.
\end{align*}
Note that $\|\cdot\|_{{\mathbb L}^2(\mathbb R_+;{\rm BMO}_{\mathcal L}) }$ is indeed a norm (which is not only a seminorm) and also
$$
 L^2(\mathbb R_+;{\rm BMO}_{\mathcal L}(\mathbb R^n))\subset {\mathbb L}^2(\mathbb R_+;{\rm BMO}_{\mathcal L} (\mathbb R^n)) \subset  \widetilde{L^2}(\mathbb R_+;{\rm BMO}_{\mathcal L} (\mathbb R^n)).
$$
Similarly, we can define $ \dot{\mathbb W}^{1,2}   (\mathbb R_+; {\rm BMO}_{\mathcal L}(\mathbb R^n))$ for $\partial_t f\in {\mathbb L}^2(\mathbb R_+;{\rm BMO}_{\mathcal L}(\mathbb R^n))$, and define ${\mathbb L}^2(\mathbb R_+; \dot{{\rm BMO}}_{\mathcal L}^2(\mathbb R^n))$ for $\mathcal L f=(\sqrt{\mathcal L})^2 f\in {\mathbb L}^2(\mathbb R_+;{\rm BMO}_{\mathcal L}(\mathbb R^n))$, respectively.

Let $\mathcal M_{\mathcal L}$ be the maximal regularity operator associated to $\mathcal L$:
$$
    \mathcal M_{\mathcal L} (f)(x,t)=\int_0^t \partial_t e^{-(t-s)\mathcal L}f(\cdot,s)(x)\,ds.
$$
that is, the operator in \eqref{MRO} replacing $\Delta$ therein by  $-\mathcal L$.

The following is our second main result. 

\begin{theorem}\label{thm:improved-mainA}   
Suppose $V \in \mathrm{RH}_q$ for some $q>n / 2$. For every $f\in  {\mathbb L}^2(\mathbb R_+;{\rm BMO}_{\mathcal L}(\mathbb R^n))$,  we have
\begin{equation}\label{eqn:global-MaxReg-II}
	    \big\|\mathcal M_{\mathcal L} f \big\|_{{\mathbb L}^2(\mathbb R_+; {\rm BMO}_{\mathcal L})}\leq C \|f\|_{{\mathbb L}^2(\mathbb R_+; {\rm BMO}_{\mathcal L})}.
\end{equation} 

As a consequence, for every $f\in {\mathbb L}^2(\mathbb R_+;{\rm BMO}_{\mathcal L}(\mathbb R^n))$,  the Cauchy problem of the Schr\"odinger equation \eqref{eqn:IVP-heat} admits a unique solution $u\in    \dot{\mathbb W}^{1,2} (\mathbb R_+; {\rm BMO}_{\mathcal L} (\mathbb R^n))\cap {\mathbb L}^2(\mathbb R_+; \dot{{\rm BMO}}_{\mathcal L}^2(\mathbb R^n))$ which satisfies
\begin{equation}\label{eqn:global-MaxReg-III}
    \left \|\partial_ t u\right \|_{{\mathbb L}^2(\mathbb R_+;{\rm BMO}_{\mathcal L})} \leq C\|f\|_{{\mathbb L}^2(\mathbb R_+;{\rm BMO}_{\mathcal L})}.
\end{equation}
\end{theorem}

Note that the conservation law   $e^{t\Delta} 1=1$ simplifies the argument of Theorem \ref{thm:endpoint-HE}. This does not hold for the heat semigroup $e^{-t\mathcal L}$. To prove Theorem \ref{thm:improved-mainA}, we will still apply the argument for showing Theorem \ref{thm:endpoint-HE} in a more subtle manner, combined with the upper bound estimates on heat kernels of $e^{-t\mathcal L}$ and their derivatives, and the slowly varying property of  $\rho$.

It's interesting to consider the global estimate for $\mathcal T_{\mathcal L}$, the regularity operator in \eqref{MRO2} with $\Delta$ therein replaced by $-\mathcal L$, the difficulty arises from 
the lack of regularity on heat kernels, one may refer to  Remark \ref{rem:global-TL} for further discussion.

The layout of the article is as follows. 
Our purpose in Section \ref{sec:clasBMO} is to prove Theorem \ref{thm:endpoint-HE}, the global-in-time maximal regularity for the Cauchy problem of the classical heat equation in $\rm BMO$.
As an application of Theorem \ref{thm:endpoint-HE}, in the next we study this topic associated to Schr\"odinger operators $\mathcal L=-\Delta+V$. To this end, we study local-in-time  maximal regularity associated to ${\rm BMO}_{\mathcal L}$ in Section \ref{sec:local-BMOL} as ingredients. Eventually, in Section \ref{sec:global-BMOL}, we prove Theorem \ref{thm:improved-mainA}, the second main result of our article.

\medskip


\section{Global-in-time  maximal regularity in ${\rm BMO}$ }\label{sec:clasBMO}
\setcounter{equation}{0}

In this section, we aim to prove Theorem \ref{thm:endpoint-HE}.

To avoid confusion, we first clarify that the original definition of the quasi-norm of $\widetilde{L^2}(\mathbb R_+; {\rm BMO})$ in \cite{OS2}, is indeed equivalent to that in \eqref{eqn:L2BMO}. It suffices to show
\begin{align}\label{equiv-L2BMO}
    \int_{0}^{r_B^2} \frac{1}{|B|^2}\int_{B\times B} \big|f(x,t)- f(y,t) \big|^2 dxdydt 
    \approx     \int_{0}^{r_B^2} \frac{1}{|B|}\int_{B} \big|f(x,t)- \left(f(\cdot, t)\right)_{B} \big|^2 dxdt,
\end{align}
with the implicit constant independent of the ball $B$. 
Note that
\begin{align*}
    \text{LHS \  of\ } \eqref{equiv-L2BMO} &= \int_{0}^{r_B^2} \frac{1}{|B|^2}\int_{B\times B} \big| \left(f(x,t)-(f(\cdot, t))_B\right) -   \left(f(y,t)-(f(\cdot, t))_B\right) \big|^2 dxdydt \\
    &\leq 2\int_{0}^{r_B^2} \frac{1}{|B|^2}\int_{B\times B} \big| f(x,t)-(f(\cdot, t))_B \big|^2 dxdydt + 2\int_{0}^{r_B^2} \frac{1}{|B|^2}\int_{B\times B} \big|   f(y,t)-(f(\cdot, t))_B \big|^2 dxdydt \\
    &= 4 \cdot  \text{RHS \  of\ } \eqref{equiv-L2BMO}   ,
\end{align*}
and it follows from H\"older's inequality that 
\begin{align*}
    \text{RHS \  of\ } \eqref{equiv-L2BMO} &= \int_{0}^{r_B^2} \frac{1}{|B|}\int_{B} \bigg|   \frac{1}{|B|} \int_B\left(f(x,t)- f(y,t)\right) dy \bigg|^2 dxdt\\
    &\leq   \text{LHS \  of\ } \eqref{equiv-L2BMO} ,  
\end{align*}
as desired.  

This fact, combined with the definition  \eqref{def:L2BMO-new} of the global-in-time ${\mathbb L}^2(\mathbb R_+;{\rm BMO}(\mathbb R^n))$ quasi-norm, ensures clearly that our Theorem \ref{thm:endpoint-HE} improve the local-in-time result on maximal regularity for the Cauchy problem of the heat equation in \cite{OS2} to a global-in-time version.

Recall that $\mathcal M_{\Delta} (f)$ and $\mathcal T_{\Delta}  (f)$ are  the regularity operators given in \eqref{MRO} and \eqref{MRO2}, respectively.

\begin{proof}[Proof of Theorem~\ref{thm:endpoint-HE}]
For any fixed $B=B(x_B,r_B)$, we  split  
\begin{align*}
    f(\cdot, s)&=\left[f(\cdot, s)-\left(f(\cdot, s)\right)_{4 B}\right] \mathsf 1_{4 B}+\left[f(\cdot,s)-\left(f(\cdot, s)\right)_{4 B}\right] \mathsf 1_{(4 B)^c}+\left(f(\cdot, s)\right)_{4 B}\\
   & =: f_1(\cdot, s)+f_2(\cdot, s)+f_3(\cdot, s),
\end{align*}

\noindent{\it Step I.} We begin by proving \eqref{eqn:MaxReg-I-classical}, which seems easier to estimate by applying  H\"older's inequality directly. 

Let 
$$
    \mathbf I_{B,j} :=  r_B^{-n}\int_{\mathbb R_+}   \int_B \left| t^{-1/2}\mathcal T_{\Delta} (f_j)(x,t)- \left(t^{-1/2}\mathcal T_{\Delta} (f_j)(\cdot, t)\right)_B\right|^2  dx dt ,\quad j=1,2,3.
$$

Observe that by H\"older's inequality,
\begin{align*}
   \mathbf I_{B,1} &\lesssim r_B^{-n}   \int_{\mathbb R_+}   \int_B \bigg|  \int_0^t t^{-1/2}\nabla_x e^{(t-s)\Delta} f_1 (\cdot, s)(x)\,ds\bigg|^2  dx dt\\
   &\lesssim
    r_B^{-n}  \int_{\mathbb R_+}  \int_{\mathbb R_+} \int_{B} \left|\nabla_x (-\Delta)^{-1/2} \sqrt{-\Delta}\, e^{\tau \Delta} f_1(\cdot, s)(x)\right|^2 dxd\tau ds.
\end{align*}

One may apply the $L^2$-boundedness of the Riesz transform and spectral theory \cite{Yo}  ( see also \eqref{eqn:FC} which holds when replacing $\mathcal L$ by $-\Delta$) to see
\begin{align}\label{eqn:classical-IB1}
 \mathbf I_{B,1}&\lesssim    r_B^{-n}  \int_{\mathbb R_+}   \left[\int_{\mathbb R_+} \int_{\mathbb R^n} \left|  \sqrt{-\tau\Delta} e^{\tau\Delta} f_1 (\cdot, s)(x)\right|^2  \frac{dxd\tau}{\tau} \right] ds \nonumber\\
 &\lesssim r_B^{-n}  \int_{\mathbb R_+}     \int_{4B}\left|f(x,s)-\left(f(\cdot,s)\right)_{4B}\right|^2 dx   ds \\
	&\lesssim\|f\|_{{\mathbb L}^2(\mathbb R_+;{\rm BMO})}^2.  \nonumber
\end{align}
One may note that at the local scale of time interval, the mean oscillation (i.e., subtracting the integral average on a ball) is not necessary, and the above estimate can be viewed as some Carleson-type estimates for the external term $f_1(x,t)$. In a local sense, the estimate on $\mathbf I_{B,1}$ is  more precise than what we initially sought. See Corollary \ref{coro:compare-1} for further discussion.
 
Note that  the conservation law $e^{\tau\Delta }(1)=1$ for any $\tau>0$ yields the cancellation
$$
    \nabla_x e^{(t-s) \Delta } f_3(\cdot, s)\equiv 0,\quad \forall\   0<s<t,
$$
hence 
$$
    \mathbf I_{B,3}=0.
$$

It suffices to estimate $ \mathbf I_{B,2}$ which is more elaborate. By Poincar\'e's inequality for balls (see for example \cite[p. 291]{E}), there exists a constant $C$ depending only on $n$, such that 
\begin{align*}
     \mathbf I_{B,2} &\leq C\int_{\mathbb R_+}  \sum_{i,j=1}^n r_B^{-n+2} \int_B \left|t^{-1/2}  \nabla_x \mathcal T_{\Delta}  (f_2)(x,t)\right|^2 dx dt\\
     &\lesssim    \sum_{i,j=1}^n   \int_{\mathbb R_+}   r_B^{-n+2} \int_B  \int_0^t  \bigg|   \frac{\partial ^2e^{(t-s)\Delta } f_2(\cdot, s)(x)}{\partial x_i \partial x_j}   \bigg|^2 ds dxdt  \\
     &\lesssim     \sum_{i,j=1}^n \int_{\mathbb R_+}  r_B^{-n+2}\int_{\mathbb R_+}  \int_B  \bigg|    \frac{\partial ^2e^{\tau\Delta } f_2(\cdot, s)(x)}{\partial x_i \partial x_j} \bigg|^2 dxd\tau   ds\\
    &\lesssim  \int_{\mathbb R_+} r_B^{-n+2}     \int_{0}^{(4r_B)^2 }  \int_B  \left|\int_{(4B)^c} \frac{1}{|x-y|^{n+2}}   \left|f(y,s)-\left(f(\cdot, s)\right)_{4 B}\right| dy\right|^2 dx d\tau ds\\
    &\quad + \sum_{k=1}^\infty    \int_{\mathbb R_+} r_B^{-n+2}   \int_{(4^kr_B)^2}^{(4^{k+1}r_B)^2} \int_B   \left| \int_{(4B)^c}    \min \left\{ \frac{1}{\tau^{\frac{n+2}{2}} },  \frac{1}{|x-y|^{n+2}}  \right\}   \left|f(y,s)-\left(f(\cdot, s)\right)_{4 B}\right| dy  \right|^2  dxd\tau ds\\
    &=:    \mathbf I_{B,2}^{(0)}  + \sum_{k=1}^\infty    \mathbf  I_{B,2}^{(k)} .
\end{align*}

Note that 
$$
    \left|f(y,s)-\left(f(\cdot, s)\right)_{4B}\right|  \leq  \left|f(y,s)-\left(f(\cdot, s)\right)_{4^{l+1}B}\right| 
    +\left|\left(f(\cdot,s)\right)_{4^{l+1}B}-\left(f(\cdot, s)\right)_{4B}\right|
$$
for any $l\geq 1$, and 
\begin{align*}
    \left|\left(f(\cdot,s)\right)_{4^{l+1}B}-\left(f(\cdot, s)\right)_{4B}\right| &\leq \sum_{m=1}^l 	\left|\left(f(\cdot,s)\right)_{4^{m+1}B}-\left(f(\cdot, s)\right)_{4^m B}\right| \\
    &\leq 4^n  \sum_{m=1}^l  \frac{1}{|4^{m+1}B|} \int_{4^{m+1}B} \left|f(z,s)-\left(f(\cdot, s)\right)_{4^{m+1}B}\right| dz,
\end{align*}
we have 
\begin{align*}
    &\int_{(4B)^c} \frac{1}{|x-y|^{n+2}}   \left|f(y,s)-\left(f(\cdot, s)\right)_{4 B}\right| dy \\
    \lesssim \,& \sum_{l=1}^\infty    \int_{4^{l+1}B\setminus {4^l B}}     \frac{1}{(4^l r_B)^{n+2}}   \left|f(y,s)-\left(f(\cdot, s)\right)_{4^{l+1}B}\right|  dy   \\
    & +    \sum_{l=1}^\infty    \int_{4^{l+1}B\setminus {4^l B}}     \frac{1}{(4^l r_B)^{n+2}}        \sum_{m=1}^l  \frac{1}{|4^{m+1}B|} \int_{4^{m+1}B} \left|f(z,s)-\left(f(\cdot, s)\right)_{4^{m+1}B}\right| dzdy\\
    \lesssim\,  & \sum_{l=1}^\infty    \frac{1}{(4^l r_B)^2}        \sum_{m=1}^l  \frac{1}{|4^{m+1}B|} \int_{4^{m+1}B} \left|f(z,s)-\left(f(\cdot, s)\right)_{4^{m+1}B}\right| dz\\
\lesssim\, & \frac{1}{r_B^2} \sum_{m=1}^\infty \sum_{l=m}^\infty \frac{1}{4^{2l}}  \frac{1}{|4^{m+1}B|}\int_{4^{m+1}B} \big|f(z,s)-(f(\cdot,s))_{4^{m+1}B} \big| \, dz \\
\lesssim \,&  \frac{1}{r_B^2}      \sum_{m=1}^\infty  \frac{1}{4^{2m}}   \frac{1}{|4^{m+1}B|}\int_{4^{m+1}B} \big|f(z,s)-(f(\cdot,s))_{4^{m+1}B} \big| \,dz.
\end{align*}
By Cauchy-Schwarz inequality and H\"older's inequality, 
\begin{align}\label{eqn:classical-IB20}
     \mathbf I_{B,2}^{(0)} &\lesssim \int_{\mathbb R_+}  r_B^2 \int_0^{(4r_B)^2}  \frac{1}{r_B^4} d\tau    \sum_{m=1}^\infty \frac{1}{4^{2m}}   \cdot  \sum_{m=1}^\infty \frac{1}{4^{2m}}       \bigg(\frac{1}{|4^{m+1}B|}\int_{4^{m+1}B} \big|f(z,s)-(f(\cdot,s))_{4^{m+1}B} \big| \,dz\bigg)^2   ds \nonumber\\
      &\lesssim \int_{\mathbb R_+}  r_B^2 \int_0^{(4r_B)^2}  \frac{1}{r_B^4} d\tau \sum_{m=1}^\infty \frac{1}{4^{2m}}    \frac{1}{|4^{m+1}B|}\int_{4^{m+1}B} \big|f(z,s)-(f(\cdot,s))_{4^{m+1}B} \big|^2 dzds \nonumber\\
     &\lesssim   \sum_{m=1}^\infty   \frac{1}{4^{2m}}   \int_{\mathbb R_+}    \frac{1}{|4^{m+1}B|}\int_{4^{m+1}B} \big|f(z,s)-(f(\cdot,s))_{4^{m+1}B} \big|^2 dzds \\
     &\lesssim \|f\|_{{\mathbb L}^2(\mathbb R_+;{\rm BMO})}^2. \nonumber
\end{align}

Similarly, for any  $\tau\in \left((4^k r_B)^2, (4^{k+1}r_B)^2\right)$ with $k\geq 1$,
\begin{align*}
& \int_{(4B)^c}    \min \left\{ \frac{1}{\tau^{\frac{n+2}{2}} },  \frac{1}{|x-y|^{n+2}}  \right\}   \left[f(y,s)-\left(f(\cdot, s)\right)_{4 B}\right] dy \\
\lesssim\, &  \frac{1}{\tau^{\frac{3}{4}}} \sum_{l=1}^\infty \int_{4^{l+1}B\setminus {4^l B}}    \frac{1}{(4^l r_B)^{n+\frac{1}{2}}}\sum_{m=1}^l \frac{1}{|4^{m+1}B|}\int_{4^{m+1}B} \big|f(z,s)-(f(\cdot,s))_{4^{m+1}B} \big|\,dz dy\\
\lesssim\,& \frac{1}{r_B^2} \frac{1}{4^{\frac{3k}{2}}}     \sum_{m=1}^\infty  \frac{1}{2^m}   \frac{1}{|4^{m+1}B|}\int_{4^{m+1}B} \big|f(z,s)-(f(\cdot,s))_{4^{m+1}B} \big|\,dz,
\end{align*}
hence for $k\geq 1$,
\begin{align}\label{eqn:classical-IB2k}
 \mathbf I_{B,2}^{(k)} &\lesssim \int_{\mathbb R_+} r_B^{2} \int_{(4^kr_B)^2}^{(4^{k+1}r_B)^2}    \frac{1}{r_B^4} d\tau\,\frac{1}{4^{3k}}   \sum_{m=1}^\infty \frac{1}{2^{m}}    \frac{1}{|4^{m+1}B|}\int_{4^{m+1}B} \big|f(z,s)-(f(\cdot,s))_{4^{m+1}B} \big|^2 dzds\nonumber\\
  \lesssim \, & \frac{1}{4^k}   \sum_{m=1}^\infty  \frac{1}{2^{m}}     \int_{\mathbb R_+}  \frac{1}{|4^{m+1}B|}\int_{4^{m+1}B} \big|f(z,s)-(f(\cdot,s))_{4^{m+1}B} \big|^2 dzds\\
 \lesssim \, & \frac{1}{4^k}  \|f\|_{{\mathbb L}^2(\mathbb R_+;{\rm BMO})}^2. \nonumber
\end{align}

Thus,
$$
 \mathbf I_{B,2} \lesssim \mathbf I_{B,2}^{(0)}  + \sum_{k=1}^\infty    \mathbf  I_{B,2}^{(k)} \lesssim \|f\|_{{\mathbb L}^2(\mathbb R_+;{\rm BMO})}^2,
$$
that is, we obtain \eqref{eqn:MaxReg-I-classical}, as desired.

\smallskip

\noindent{\it Step II.} It remains to show \eqref{eqn:MaxReg-II-classical}.
Let
$$
    \mathbf J_{B,j} :=  r_B^{-n}\int_{\mathbb R_+}  \int_B \left|\mathcal M_{\Delta} (f_j)(x,t)- \left(\mathcal M_{\Delta} (f_j)(\cdot, t)\right)_B\right|^2 dx dt ,\quad j=1,2,3.
$$

Similarly, the conservation law $e^{\tau \Delta }(1)=1$ for $\tau>0$ yields  $ \Delta e^{(t-s) \Delta } f_3(\cdot, s)\equiv 0$ for any  $s<t$, and this implies that 
$$
    \mathbf J_{B,3}=0.
$$
Hence it suffices to estimate $\mathbf J_{B,1}$ and $\mathbf J_{B,2}$.

Since the Laplace operator $\Delta$ generates a bounded holomorphic semigroup in  $L^2(\mathbb R^n)$, it follows from de Simon's work \cite{dS}  to see the maximal operator  $\mathcal M_{\Delta} $  is a bounded operator on $L^2(\mathbb R_+^{n+1})$. Therefore,
\begin{align}\label{eqn:classical-JB1}
    \mathbf J_{B,1}&\lesssim   r_B^{-n}\int_{\mathbb R_+}   \int_B \big|\mathcal M_{\Delta} (f_1)(x,t) \big|^2 dx dt\nonumber\\
    &\lesssim  r_B^{-n} \iint_{\mathbb R_+^{n+1}} \big|f_1(x,t)\big|^2 dxdt\\
    &\lesssim  \left\|f\right\|_{{\mathbb L}^2(\mathbb R_+;{\rm  BMO})}^2. \nonumber
\end{align}

 We now consider the remaining term $\mathbf J_{B,2}$. By Poincar\'e's inequality again,
\begin{align*}
   \mathbf J_{B,2}&\lesssim\, \int_{\mathbb R_+}   r_B^{-n+2}  \int_B  \big|\nabla_x \mathcal M_{\Delta}  (f_2)(x,t)\big|^2 dxdt\\
  & \leq \,  \left\{   \int_{0}^{(8r_B)^2} + \int_{(8r_B)^2}^{\infty}\right\}   r_B^{-n+2}\int_B   \bigg(\int_0^t \left|\nabla_x \Delta e^{(t-s)\Delta} f_2(\cdot, s) (x)\right| ds\bigg)^2  dxdt  \\
   &=:\, \mathbf J_{B,2}^{(1)}+\mathbf J_{B,2}^{(2)}.
\end{align*}

For the first term $\mathbf J_{B,2}^{(1)}$,  
\begin{align}\label{eqn:classical-JB21}
    \mathbf J_{B,2}^{(1)} &\lesssim \int_0^{(8r_B)^2}  r_B^{-n+4} \int_B   \int_0^t      \left| \nabla_x \Delta e^{(t-s)\Delta} f_2(\cdot, s) (x) \right|^2 dsdxdt \nonumber\\
    &\lesssim \int_0^{(8r_B)^2}  r_B^{-n+4} \int_B   \int_0^t   \bigg(\int_{(4B)^c} \frac{1}{|x-y|^{n+3}}   \left|f(y,s)-\left(f(\cdot, s)\right)_{4 B}\right| dy  \bigg)^2   dsdxdt  \nonumber\\
    &\lesssim    \int_0^{(8r_B)^2}  r_B^{4}  \int_0^{(8r_B)^2}   \bigg(   \frac{1}{r_B^3}      \sum_{m=1}^\infty  \frac{1}{4^{3m}}   \frac{1}{|4^{m+1}B|}\int_{4^{m+1}B} \big|f(z,s)-(f(\cdot,s))_{4^{m+1}B} \big|\, dz    \bigg)^2   dtds \nonumber\\
    &\lesssim    \sum_{m=1}^\infty      \frac{1}{4^{3m}}     \int_0^{(8r_B)^2}      \frac{1}{|4^{m+1}B|}\int_{4^{m+1}B} \big|f(z,s)-(f(\cdot,s))_{4^{m+1}B} \big|^2 dz    ds\\
    &\lesssim  \left\|f\right\|_{{\mathbb L}^2(\mathbb R_+;{\rm  BMO})}^2. \nonumber
\end{align}

To estimate $\mathbf J_{B,2}^{(2)}$, we rewrite 
\begin{align*}
    \mathbf J_{B,2}^{(2)}&\lesssim   \int_{(8r_B)^2}^{\infty}    r_B^{-n+2}\int_B   \bigg(\int_{t-(4r_B)^2}^{t}  \left|\nabla_x \Delta e^{(t-s)\Delta} f_2(\cdot, s) (x)\right| ds\bigg)^2  dxdt  \\
    &  \quad + \int_{(8r_B)^2}^{\infty}    r_B^{-n+2}\int_B   \bigg(\int_0^{t-(4r_B)^2} \left|\nabla_x \Delta e^{(t-s)\Delta} f_2(\cdot, s) (x)\right| ds\bigg)^2  dxdt  \\
    &=:   \mathbf J_{B,2}^{(2,1)}+\mathbf J_{B,2}^{(2,2)}.
\end{align*}
 
 Note that 
 \begin{align*}
 \mathbf J_{B,2}^{(2,1)}&\lesssim   \int_{(8r_B)^2}^{\infty}   r_B^{-n+4}\int_B \int_{t-(4r_B)^2}^{t}\left| \nabla_x \Delta e^{(t-s)\Delta} f_2(\cdot, s) (x)\right|^2 dsdxdt\\
 &\lesssim   \int_{(8r_B)^2-(4r_B)^2}^{\infty}   r_B^{-n+4} \int_{0}^{(4r_B)^2}\int_B\left| \nabla_x \Delta e^{\tau\Delta} f_2(\cdot, s) (x)\right|^2 d\tau dxds,
 \end{align*}
hence one may apply an argument  similar to that of  $\mathbf I_{B,2}^{(0)} $ in {\it Step I} to obtain
\begin{align}\label{eqn:classical-JB221}
     \mathbf J_{B,2}^{(2,1)} &\lesssim   \sum_{m=1}^\infty    \frac{1}{4^{3m}}   \int_{\mathbb R_+}      \frac{1}{|4^{m+1}B|}\int_{4^{m+1}B} \big|f(z,s)-(f(\cdot,s))_{4^{m+1}B} \big|^2 dzds \\
     &\lesssim \|f\|_{{\mathbb L}^2(\mathbb R_+;{\rm BMO})}^2. \nonumber
\end{align}

It remains to estimate $\mathbf J_{B,2}^{(2,2)}$. Since for each $s,t>0$ with $t>s$ and for each $x\in B$,
\begin{align*}
      \left|\nabla_x \Delta e^{(t-s)\Delta} f_2(\cdot,s ) (x)\right| &\lesssim   \int_{(4B)^c}  \frac{1}{(t-s)^{\frac{\theta}{2}}} \frac{1}{|x-y|^{n+3-\theta}} \left|f(y,s)-\left(f(\cdot, s)\right)_{4 B}\right| dy  \\
      &\lesssim  \frac{1}{(t-s)^{\frac{\theta}{2}}}   \frac{1}{r_B^{3-\theta}}      \sum_{m=1}^\infty  \frac{1}{4^{(3-\theta)m}}   \frac{1}{|4^{m+1}B|}\int_{4^{m+1}B} \big|f(z,s)-(f(\cdot,s))_{4^{m+1}B} \big|dz.
\end{align*}
for any $\theta\in (0,3)$.  Now, we fix some $\theta\in (2,3)$ and denote
$$
     \mathbf{Avg}_{B}(s):= \sum_{m=1}^\infty  \frac{1}{4^{(3-\theta)m}}   \frac{1}{|4^{m+1}B|}\int_{4^{m+1}B} \big|f(z,s)-(f(\cdot,s))_{4^{m+1}B} \big|dz,
$$
then
\begin{align*}
    \mathbf J_{B,2}^{(2,2)}&\lesssim   \int_{(8r_B)^2}^{\infty}    r_B^{2}   \frac{1}{r_B^{6-2\theta}}   \bigg|\int_0^{t-(4r_B)^2}       \frac{1}{(t-s)^{\frac{\theta}{2}}}   \mathbf{Avg}_{B}(s) \,ds\bigg|^2    dt\\
    &=r_B^{2\theta-4}    \int_{\mathbb R_+}        \bigg|\int_{\mathbb R_+}       K(t,s)  \mathbf{Avg}_{B}(s) \,ds\bigg|^2    dt,
\end{align*}
where
$$
    K(t,s)= \frac{1}{(t-s)^{\frac{\theta}{2}}}  \mathsf 1_{((8r_B)^2,\, +\infty)}(t)\cdot \mathsf 1_{(0,\, t-(4r_B)^2)}(s).
$$

Due to $\theta\in (2,3)$ fixed, we have 
$$
     \sup_s \int_{\mathbb R_+} K(t,s) \,dt\leq \int_{(4r_B)^2}^{\infty} \frac{1}{\tau^{\frac{\theta}{2}}} \, d\tau\lesssim r_B^{2-\theta}
$$
and similarly
$$
    \sup_t \int_{\mathbb R_+} K(t,s) \,ds\lesssim r_B^{2-\theta}.
$$
Hence by Schur's lemma  (see \cite[p. 589]{Gra} for instance), the operator $T$ defined by 
$$
T(\mathbf{Avg}_{B})(t)= \int_{\mathbb R_+}       K(t,s)  \mathbf{Avg}_{B}(s) \,ds
$$ 
is bounded from $L^2(\mathbb R_+)$ to $L^2(\mathbb R_+)$ with norm $\|T\|_{L^2(\mathbb R_+)\to L^2(\mathbb R_+)}\lesssim r_B^{2-\theta}$. This yields for any fixed $\theta\in (2,3)$, 
\begin{align}\label{eqn:classical-JB222}
    \mathbf J_{B,2}^{(2,2)}&\lesssim r_B^{2\theta-4}  r_B^{2\cdot (2-\theta)}  \int_{\mathbb R_+}  \left( \mathbf{Avg}_{B}(s) \right)^2 ds \nonumber \\
    &\lesssim    \sum_{m=1}^\infty  \frac{1}{4^{(3-\theta)m}}  \int_{\mathbb R_+}     \frac{1}{|4^{m+1}B|}\int_{4^{m+1}B} \big|f(z,s)-(f(\cdot,s))_{4^{m+1}B} \big|^2 dzds\\
   & \lesssim \left\|f\right\|_{{\mathbb L}^2(\mathbb R_+;{\rm  BMO})}^2. \nonumber
\end{align}

Therefore, \eqref{eqn:MaxReg-II-classical} holds. Then \eqref{eqn:MaxReg-III-classical} follows readily   by noting
$$
   \nabla_x u(x,t)=\int_0^t  \nabla_x e^{(t-s)\Delta}f(\cdot, s)(x)\, ds=\mathcal T_{\Delta}  f(x,s)
$$
and
$$
 \partial_t u(x,t)=f(x,t)+\int_0^t  \partial_t e^{(t-s)\Delta }f(\cdot, s)(x)\, ds=f(x,t)+\mathcal M_{\Delta} (f)(x,s).
$$

We complete the proof of Theorem \ref{thm:endpoint-HE}.
\end{proof}

\smallskip

Obviously our Theorem \ref{thm:endpoint-HE} is a global version of the result in \cite{OS2}. Furthermore,
as a straightforward consequence of our argument for Theorem \ref{thm:endpoint-HE}, we have the following Carleson-type estimates.

\begin{corollary}\label{coro:compare-1}
For every $f\in  \widetilde{L^2} (\mathbb R_+;{\rm BMO}(\mathbb R^n))$,  we have
\begin{equation}\label{eqn:Carleson-I-classical}
	    \sup_{B=B(x_B,r_B)}   \bigg( r_B^{-n}\int_0^{r_B^2}\int_B
	 \big|\mathcal M_{\Delta}  (f)(x,t)\big|^2 dxdt\bigg)^{1/2}  \leq C \|f\|_{ \widetilde{L^2}(\mathbb R_+; {\rm BMO})}
\end{equation} 
and
\begin{equation}\label{eqn:Carleson-II-classical}
     \sup_{B=B(x_B,r_B)}  \bigg(r_B^{-n}\int_0^{r_B^2}\int_B \big| \mathcal T_{\Delta}  (f)(x,t)\big|^2 \frac{dxdt}{t}\bigg)^{1/2} \leq C \|f\|_{ \widetilde{L^2} (\mathbb R_+; {\rm BMO})}.
\end{equation}
\end{corollary}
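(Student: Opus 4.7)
The plan is to mimic the proof of Theorem~\ref{thm:endpoint-HE} with two simplifications afforded by this local Carleson-type statement: the output norm contains no mean oscillation to subtract, and the $t$-integral is confined to the local window $(0,r_B^2)$. Fix a ball $B=B(x_B,r_B)$ and, for each $s$, decompose
$$
   f(\cdot,s) = f_1(\cdot,s) + f_2(\cdot,s) + f_3(\cdot,s)
$$
exactly as in Step~I of the proof of Theorem~\ref{thm:endpoint-HE}. Since $\partial_t e^{\tau\Delta} = \Delta e^{\tau\Delta}$ and $\nabla_x e^{\tau\Delta}$ both annihilate the $x$-constant piece $f_3$, the contributions $\mathcal M_\Delta(f_3)$ and $\mathcal T_\Delta(f_3)$ vanish, so only $f_1$ and $f_2$ require attention.

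For the local piece $f_1$, I would invoke de Simon's $L^2(\mathbb R_+^{n+1})$-boundedness of $\mathcal M_\Delta$ for \eqref{eqn:Carleson-I-classical}, and the $L^2$-boundedness of the Riesz transform combined with the square-function identity for $\sqrt{-\tau\Delta}\,e^{\tau\Delta}$ for \eqref{eqn:Carleson-II-classical}, just as in the analyses of $\mathbf J_{B,1}$ and $\mathbf I_{B,1}$. The key observation is that, for any $t\le r_B^2$, both $\mathcal M_\Delta f(x,t)$ and $\mathcal T_\Delta f(x,t)$ depend only on $f(\cdot,s)$ with $s<t\le r_B^2$, so without loss of generality $f_1$ may be replaced by $f_1\,\mathsf 1_{(0,r_B^2)}(s)$. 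The resulting right-hand side $r_B^{-n}\int_0^{r_B^2}\int_{4B}|f(x,s)-(f(\cdot,s))_{4B}|^2\,dxds$ is trivially dominated by $\|f\|_{\widetilde{L^2}(\mathbb R_+;{\rm BMO})}^2$.

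For the far piece $f_2$, I would use the Gaussian size estimates on $\Delta e^{(t-s)\Delta}$ (respectively $\nabla_x e^{(t-s)\Delta}$) to extract the pointwise bound $\int_{(4B)^c}|f(y,s)-(f(\cdot,s))_{4B}|\,|x-y|^{-(n+2)}\,dy$ (respectively $|x-y|^{-(n+1)}$), valid for $x\in B$ since $|x-y|\ge 2r_B$. Telescoping $(f(\cdot,s))_{4^{m+1}B}-(f(\cdot,s))_{4B}$ as in the treatment of $\mathbf I_{B,2}^{(0)}$ and $\mathbf J_{B,2}^{(1)}$, then applying Cauchy--Schwarz in $s\in(0,r_B^2)$ and summing the resulting geometric series in $m$ delivers the desired bound by $\|f\|_{\widetilde{L^2}(\mathbb R_+;{\rm BMO})}^2$.

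Crucially, since the outer $t$-integral is confined to $(0,r_B^2)$, neither the Schur's lemma argument used to handle the large-time term $\mathbf J_{B,2}^{(2,2)}$ nor the Poincar\'e inequality used to produce the extra $\nabla_x$ in Theorem~\ref{thm:endpoint-HE} is needed here. The only mild bookkeeping obstacle is to verify that each dyadic time integral may be enlarged from $(0,r_B^2)$ to $(0,(4^{m+1}r_B)^2)$ at no cost, so that the sum collapses into $\|f\|_{\widetilde{L^2}(\mathbb R_+;{\rm BMO})}^2$ through the definition \eqref{eqn:L2BMO}.
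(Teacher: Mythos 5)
Your proposal follows the paper's own proof almost verbatim: the same $f_1+f_2+f_3$ decomposition with the time cutoff $\mathsf 1_{(0,r_B^2)}(s)$, the same use of de Simon's $L^2(\mathbb R_+^{n+1})$-boundedness and the Riesz transform/square-function identity for $f_1$, the same kernel decay plus telescoping-then-Cauchy--Schwarz for $f_2$, the same conservation-law cancellation for $f_3$, and the same observation that neither Poincar\'e nor Schur is needed because the $t$-window is local. The approach and all key steps match; only superficial bookkeeping phrasing differs.
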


\smallskip

It's clear that 
$$
    \text{ LHS\ of\ } \eqref{eqn:IVP-heat-I-classical}\lesssim \text{ LHS\ of\ } \eqref{eqn:Carleson-I-classical}\qquad   \text{ LHS\ of\ } \eqref{eqn:IVP-heat-II-variant-classical}\lesssim \text{ LHS\ of\ } \eqref{eqn:Carleson-II-classical},
$$
hence the above Carleson-type estimates \eqref{eqn:Carleson-I-classical} and   \eqref{eqn:Carleson-II-classical} are sharper than the local maximal regularity estimates \eqref{eqn:IVP-heat-I-classical} and \eqref{eqn:IVP-heat-II-variant-classical} in \cite{OS2}, respectively. This fact implies that our method is feasible and applicable to obtain refined estimates even at the local scale.

\begin{proof}[Proof of Corollary \ref{coro:compare-1}]
For any fixed $B=B(x_B,r_B)$,  we split $f=f_1+f_2+f_3$ as in the  Proof of Theorem~\ref{thm:endpoint-HE} while constraining the time variable on $(0, r_B^2)$, that is,
\begin{align*}
    f(x, s)=&\left[f(x, s)-\left(f(\cdot, s)\right)_{4 B}\right] \mathsf 1_{4 B} (x) \cdot \mathsf 1_{(0,\, r_B^2)}(s)\\
    & \  + \left[f(x,s)-\left(f(\cdot, s)\right)_{4B}\right] \mathsf 1_{  (4 B)^c}(x)\cdot \mathsf 1_{(0,\, r_B^2)}(s)+\left(f(\cdot, s)\right)_{4 B} \cdot \mathsf 1_{(0,\, r_B^2)}(s)\\
    =: &f_1(x, s)+f_{2}(x, s)+f_3(x, s).
\end{align*}

\noindent{\it Step I.} We begin by proving \eqref{eqn:Carleson-II-classical} and let
$$
    \mathsf I_{B,j} :=  r_B^{-n}\int_0^{r_B^2}\int_B \big| \mathcal T_{\Delta}  f_j (x,t)\big|^2 \frac{dxdt}{t},\quad j=1,2,3.
$$

Similar to \eqref{eqn:classical-IB1},
$$
      \mathsf I_{B,1}\lesssim  r_B^{-n} \, \|f_1\|_{L^2(\mathbb R_+^{n+1})}^2= r_B^{-n}\int_0^{r_B^2} \int_{4B}  \big|f(x,s)-(f(\cdot,s))_{4B}\big|^2dxds \lesssim \|f\|_{\widetilde{L}^2(\mathbb R_+; {\rm BMO})}^2.
$$

Using the argument for $\mathbf I_{B,2}^{(0)} $ in Thereom ~\ref{thm:endpoint-HE},  we have 
\begin{align*}
     \mathsf I_{B,2} &\lesssim r_B^{-n+2}\int_0^{r_ B^2}\int_B \int_0^{r_B^2} \bigg|\int_{(4B)^c} \frac{1}{|x-y|^{n+1}}  \left|f(y,s)-\left(f(\cdot, s)\right)_{4 B}\right| dy  \bigg|^2 dxdtds\\
     &\lesssim \sum_{m=1}^\infty \frac{1}{4^m}\int_0^{r_B^2}  \frac{1}{|4^{m+1}B|}\int_{4^{m+1}B}  \big|f(z,s)-(f(\cdot,s))_{4^{m+1}B} \big|^2 dzds \\
     & \lesssim \|f\|_{\widetilde{L}^2(\mathbb R_+; {\rm BMO})}^2.
\end{align*}

Hence \eqref{eqn:Carleson-II-classical}  holds by noting $ \mathsf I_{B,3}=0$.

\medskip

\noindent{\it Step II.} It remains to show \eqref{coro:compare-1}.
Let
$$
    \mathsf J_{B,j} :=  r_B^{-n}\int_0^\infty   \int_B \big|\mathcal M_{\Delta} (f_j)(x,t)\big|^2 dx dt ,\quad j=1,2,3,
$$
where $\mathsf J_{B,3} =0$.

Similar to \eqref{eqn:classical-JB21}, using the $L^2(\mathbb R_+^{n+1})$ boundedness of $\mathcal M_{\Delta}$, 
$$
    \mathsf J_{B,1}\lesssim r_B^{-n} \, \|f_1\|_{L^2(\mathbb R_+^{n+1})}^2  \lesssim \|f\|_{\widetilde{L}^2(\mathbb R_+; {\rm BMO})}^2.
$$ 

Using the argument for $\mathbf J_{B,2}^{(1)}$ in Thereom ~\ref{thm:endpoint-HE},  we have 
\begin{align*}
\mathsf J_{B,2} & \lesssim \int_0^{(8r_B)^2}  r_B^{-n} \int_B   t\int_0^t      \left| \Delta e^{(t-s)\Delta} (f_2(\cdot, s)) (x) \right|^2 dsdxdt \nonumber\\
&\lesssim  \sum_{m=1}^\infty      \frac{1}{4^{2m}}     \int_0^{(8r_B)^2}      \frac{1}{|4^{m+1}B|}\int_{4^{m+1}B} \big|f(z,s)-(f(\cdot,s))_{4^{m+1}B} \big|^2 dz    ds\\
         & \lesssim \|f\|_{\widetilde{L}^2(\mathbb R_+; {\rm BMO})}^2.
\end{align*}

We complete the proof of Corollary \ref{coro:compare-1}.
\end{proof}

\smallskip

One will see that the analogous  Carleson-type estimates  associated to the Schr\"odinger operator $\mathcal L$ for external forces are also valid; see 
Theorem \ref{thm:mainA} in Section \ref{sec:local-BMOL}.

\begin{remark}\label{rem:compare-2}
In comparison with the local results in \cite{OS2}, the improvement of our Theorem \ref{thm:endpoint-HE} arises from taking full advantage of the mean oscillation occurring from the definition, which deduces an additional derivative via Poincar\'e's inequality and further contributes a higher decay order which is necessary for global estimates.  

Technically, the conservation law of $e^{t\Delta }1=1$ for any $t>0$ yields the cancellations $\nabla_x e^{t\Delta}1=0$ and  $\Delta e^{t\Delta }1=0$, which deduce  the maximal regularity estimate concerning $f_3$ is trivial when splitting $f=f_1+f_2+f_3$ as done in the proof.  Furthermore, one will see in Section \ref{sec:global-BMOL} that global maximal regularity associated to Schr\"odinger operators is still valid without the conservation law,  and the corresponding argument inspired by the above proof of Theorem \ref{thm:endpoint-HE}  is rather more subtle due to an increased complexity.
\end{remark}

\smallskip

\begin{remark}
Similarly, one may define the space ${\mathbb L}^2(\mathbb R_+;{\rm CMO}(\mathbb R^n))$ in terms of limiting behaviors of mean oscillation:  a measurable function $f$ of ${\mathbb L}^2(\mathbb R_+; {\rm BMO}(\mathbb R^n))$ is in $ {\mathbb L}^2(\mathbb R_+; {\rm CMO}(\mathbb R^n))$, if $f$ satisfies  $ \gamma_i(f)=0$ for $1\leq i\leq 3$, where
\begin{align*}
 \gamma_1(f) &=\lim _{a \rightarrow 0} \sup _{B: \,r_{B} \leq a}\bigg( r_B^{-n} \int_{\mathbb R_+}\int_{B}\left|f(x,s)-(f(\cdot, s))_{B}\right|^{2} d xds\bigg)^{1 / 2} ;\\
 \gamma_2(f) &=\lim _{a \rightarrow \infty} \sup _{B: \,r_{B} \geq a}\bigg( r_B^{-n} \int_{\mathbb R_+}\int_{B}\left|f(x,s)-(f(\cdot, s))_{B}\right|^{2} d xds\bigg)^{1 / 2}  ;\\
\gamma_3(f) &= \lim _{a \rightarrow \infty} \sup _{B: \, B \subseteq (B(0, a))^c} \bigg( r_B^{-n} \int_{\mathbb R_+}\int_{B}\left|f(x,s)-(f(\cdot, s))_{B}\right|^{2} d xds\bigg)^{1 / 2}.
\end{align*}

Based on Theorem \ref{thm:endpoint-HE}, we have the following result readily.

For every $f\in {\mathbb L}^2(\mathbb R_+;{\rm CMO}(\mathbb R^n))$,  the estimates \eqref{eqn:MaxReg-II-classical}, \eqref{eqn:MaxReg-I-classical},  and \eqref{eqn:MaxReg-III-classical}  in Theorem \ref{thm:endpoint-HE} hold. Additionally,
\begin{equation}\label{eqn:MaxReg-IV}
	 \lim _{a \rightarrow 0}\sup _{B: r_{B} \leq a}  \,\mathsf{C}(f)_{B,j}
     =  \lim _{a \rightarrow \infty}\sup _{B: r_{B} \geq a}  \,\mathsf{C}(f)_{B,j}
     =   \lim _{a \rightarrow \infty}\sup _{B: B \subseteq \left(B(0, a)\right)^c}  \,\mathsf{C}(f)_{B,j}=0,\quad j=1,2,
\end{equation}
where
$$
\mathsf{C}(f)_{B,1}=\bigg( r_B^{-n} \int_{\mathbb R_+}\int_B
	 \left|\mathcal M_{\Delta} (f)(x,t)-\big(\mathcal M_{\Delta} (f)(\cdot,t)\big)_B\right|^2 dxdt\bigg)^{1/2}
$$
and 
$$
    \mathsf{C}(f)_{B,2}=\left( r_B^{-n} \int_{\mathbb R_+}\int_B \left|\mathcal T_{\Delta}(f)(x,t)-\big(\mathcal T_{\Delta} (f)(\cdot,t)\big)_B\right|^2 \frac{dxdt}{t}\right)^{1/2} .
$$
\end{remark}

\medskip


\section{Carleson type estimates and Maximal regularity in ${\rm BMO}_{\mathcal L}$: a local version} \label{sec:local-BMOL}
\setcounter{equation}{0}

As an application of our methodology for global maximal regularity of the classical heat equation in $\rm BMO$, we will extend to the study of this topic for the Cauchy problem \eqref{eqn:IVP-heat} of the Schr\"odinger equation in the ${\rm BMO}_{\mathcal L} $ setting (i.e., Theorem \ref{thm:improved-mainA}). To this end, we begin by providing a  local result in this section, which will be useful for our purpose.

Similar to  $\widetilde{L^2}(\mathbb R_+; {\rm BMO}(\mathbb R^n))$ in \eqref{eqn:L2BMO}, we say $f(x,t)$ belongs to $\widetilde{L^2}(\mathbb R_+;{\rm BMO}_{\mathcal L}(\mathbb R^n))$, if
\begin{align*} 
  \|f\|_{\widetilde{L^2}(\mathbb R_+;{\rm BMO}_{\mathcal L})} ^2   :=&  \max\left\{ \sup_{B=B(x_B,r_B):\, r_B<\rho(x_B)}   r_B^{-n} \int_{0}^{r_B^2} \int_B \left|f(x,t)- \left(f(\cdot, t)\right)_B \right|^2 dxdt,  \right.\nonumber\\ 
    & \qquad \qquad  \left.  \sup_{B=B(x_B,r_B):\, r_B\geq \rho(x_B)} r_B^{-n}  \int_{0}^{r_B^2}  \int_B |f(x,t)|^2dxdt\right\}
     <\infty.
\end{align*}
Note that $\|\cdot\|_{\widetilde{L^2}(\mathbb R_+;{\rm BMO}_{\mathcal L}) }$ is indeed a norm (which is not only a seminorm). 
Similarly, we can define $\widetilde{\dot{W}^{1,2}}(\mathbb R_+; {\rm BMO}_{\mathcal L}(\mathbb R^n))$ for $\partial_t f\in \widetilde{L^2}(\mathbb R_+;{\rm BMO}_{\mathcal L}(\mathbb R^n))$, and define $\widetilde{L^2}(\mathbb R_+; \dot{{\rm BMO}}^2_{\mathcal L}(\mathbb R^n))$ for $\mathcal L f\in \widetilde{L^2}(\mathbb R_+;{\rm BMO}_{\mathcal L}(\mathbb R^n))$, respectively.
Recall that  $\mathcal M_{\mathcal L} (f)$ and $\mathcal T_{\mathcal L}  (f)$ be  the regularity operators given in 
\eqref{MRO} and \eqref{MRO2} (with $\Delta$ replaced by $-\mathcal L$), respectively.
 
 Let us state our main result in this section, which can be viewed as the analog of Corollary \ref{coro:compare-1}, associated to $\mathcal L$.
 
\begin{theorem}\label{thm:mainA}   
Suppose $V \in \mathrm{RH}_q$ for some $q>n / 2$. For every $f\in  \widetilde{L^2}(\mathbb R_+;{\rm BMO}_{\mathcal L}(\mathbb R^n))$,  we have
\begin{equation}\label{eqn:MaxReg-II}
	 \sup_{B=B(x_B,r_B)}   \bigg( r_B^{-n}\int_0^{r_B^2}\int_B
	 \big|\mathcal M_{\mathcal L}  (f)(x,t)\big|^2 dxdt\bigg)^{1/2}   \leq C\|f\|_{\widetilde{L^2}(\mathbb R_+;{\rm BMO}_{\mathcal L})}.
\end{equation}
and
\begin{equation}\label{eqn:MaxReg-I}
	 \sup_{B=B(x_B,r_B)}  \bigg(r_B^{-n}\int_0^{r_B^2}\int_B \big| \mathcal T_{\mathcal L}  (f)(x,t)\big|^2 \frac{dxdt}{t}\bigg)^{1/2}  
	  \leq C\|f\|_{\widetilde{L^2}(\mathbb R_+;{\rm BMO}_{\mathcal L})}.
\end{equation}

As a consequence, for every $f\in  \widetilde{L^2}(\mathbb R_+;{\rm BMO}_{\mathcal L}(\mathbb R^n))$,  the Cauchy problem \eqref{eqn:IVP-heat} admits a unique solution $u\in \widetilde{\dot{W}^{1,2}}(\mathbb R_+; {\rm BMO}_{\mathcal L} (\mathbb R^n))\cap \widetilde{L^2}(\mathbb R_+; \dot{{\rm BMO}}_{\mathcal L}^2(\mathbb R^n))$ which satisfies
\begin{equation}\label{eqn:MaxReg-III}
    \left \|t^{-1/2}\partial_x u\right \|_{\widetilde{L^2}(\mathbb R_+;{\rm BMO}_{\mathcal L})} +\left \|\partial_ t u\right \|_{\widetilde{L^2}(\mathbb R_+;{\rm BMO}_{\mathcal L})} \leq C\|f\|_{\widetilde{L^2}(\mathbb R_+;{\rm BMO}_{\mathcal L})}.
\end{equation}
\end{theorem}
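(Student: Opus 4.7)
The plan is to adapt the proof of Corollary~\ref{coro:compare-1} to the Schr\"odinger operator $\mathcal L=-\Delta+V$. For any fixed ball $B=B(x_B,r_B)$, I split the external force as $f=f_1+f_2+f_3$, where $f_1,f_2$ are the restrictions of $f-(f(\cdot,s))_{4B}$ to $4B$ and to $(4B)^c$ respectively, $f_3=(f(\cdot,s))_{4B}$, and all three pieces are cut off in time to $(0,r_B^2)$. The new feature relative to the heat case is that the $\mathrm{BMO}_{\mathcal L}$ norm is of a different nature on subcritical balls ($r_B<\rho(x_B)$), where it measures mean oscillation, and on supercritical balls ($r_B\geq\rho(x_B)$), where it measures the raw $L^2$ norm, so I will analyse these two regimes separately.

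For the subcritical regime, the arguments for $\mathbf I_{B,1}$, $\mathbf I_{B,2}^{(0)}$, $\mathbf J_{B,1}$ and $\mathbf J_{B,2}^{(1)}$ in Theorem~\ref{thm:endpoint-HE} go through essentially unchanged once equipped with the Gaussian derivative bounds $|\partial_t p_t^{\mathcal L}(x,y)|\lesssim t^{-1-n/2}e^{-c|x-y|^2/t}$ and $|\nabla_x p_t^{\mathcal L}(x,y)|\lesssim t^{-(n+1)/2}e^{-c|x-y|^2/t}$ (from the heat kernel estimates for $\mathcal L$) and with the $L^2(\mathbb R_+^{n+1})$-boundedness of $\mathcal M_{\mathcal L}$ furnished by de~Simon's theorem (applicable since $-\mathcal L$ generates a bounded analytic semigroup on $L^2(\mathbb R^n)$). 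The genuinely new contribution is $f_3$, which does not drop out because $e^{-t\mathcal L}(1)\neq 1$; the terms to control are
$$
\int_0^t (f(\cdot,s))_{4B}\,\mathcal L e^{-(t-s)\mathcal L}(1)(x)\,ds \quad\text{and}\quad \int_0^t (f(\cdot,s))_{4B}\,\nabla_x e^{-(t-s)\mathcal L}(1)(x)\,ds.
$$
I will invoke Shen-type pointwise estimates of the form
$$
|\mathcal L e^{-\tau\mathcal L}(1)(x)|\lesssim \tau^{-1}\bigl(\sqrt\tau/\rho(x)\bigr)^{\delta_0}, \qquad |\nabla_x e^{-\tau\mathcal L}(1)(x)|\lesssim \tau^{-1/2}\bigl(\sqrt\tau/\rho(x)\bigr)^{\delta_0},
$$
valid for some $\delta_0>0$ when $V\in\mathrm{RH}_q$ and $0<\tau\lesssim\rho(x)^2$; the positive power of $\sqrt\tau/\rho(x)$ is precisely what kills the singular prefactor and renders the $s$-integral over $(0,r_B^2)\subset(0,\rho(x_B)^2)$ absolutely convergent. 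The average $|(f(\cdot,s))_{4B}|$ itself, not immediately controlled by the BMO norm, is handled by iterating the $\mathrm{BMO}_{\mathcal L}$ oscillation inequality from a comparison supercritical ball of radius $\sim\rho(x_B)$ (where the norm does give an $L^2$ bound) down to $4B$, then integrating in $s$.

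For the supercritical regime the norm directly controls $r_B^{-n}\int_0^{r_B^2}\int_B|f|^2$, and I split $f$ into its part on $4B$ (handled by $L^2$-boundedness of $\mathcal M_{\mathcal L}$) and its complement. The latter is treated with the refined Gaussian bound with $\rho$-decay,
$$
p_t^{\mathcal L}(x,y)\leq C_N\,t^{-n/2}e^{-c|x-y|^2/t}\bigl(1+\sqrt t/\rho(x)\bigr)^{-N}
$$
for arbitrary $N$, together with Shen's slow-variation property of $\rho$. Since the annuli $4^kB\setminus 4^{k-1}B$ are supercritical once $k$ is large enough, the relevant $L^2$ averages on them are controlled by $\|f\|_{\widetilde{L^2}(\mathbb R_+;\mathrm{BMO}_{\mathcal L})}$, and the large $N$ supplies the required geometric decay in $k$.

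The principal obstacle will be the $f_3$ contribution in the subcritical regime, which has no analogue in the classical heat case because of the conservation law $e^{t\Delta}(1)=1$; closing this loop requires both the sharp Shen-type bounds on $e^{-\tau\mathcal L}(1)$ and its derivatives and the careful iteration across the subcritical/supercritical boundary. Once \eqref{eqn:MaxReg-II} and \eqref{eqn:MaxReg-I} are established, the maximal regularity \eqref{eqn:MaxReg-III} is immediate from the Duhamel formula $\nabla_x u=\mathcal T_{\mathcal L}(f)$ and $\partial_t u=f+\mathcal M_{\mathcal L}(f)$, the elementary inequality $\|g-g_B\|_{L^2(B)}\leq\|g\|_{L^2(B)}$ on subcritical balls (and no mean subtraction on supercritical ones), with uniqueness via standard $L^2$ energy estimates for $-\Delta+V$.
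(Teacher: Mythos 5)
Your plan for \eqref{eqn:MaxReg-II} (the $\mathcal M_{\mathcal L}$ estimate) is broadly sound: it involves only time derivatives of the heat kernel, for which the Gaussian bound with $\rho$-decay (Lemma~\ref{lem:heat-Schrodinger}(i)) and the estimate on $\partial_t e^{-t\mathcal L}(1)$ (Lemma~\ref{lem:heat-Schrodinger}(ii)) are available, and the paper's own treatment of the $f_3$ term in the subcritical regime proceeds roughly as you sketch, iterating oscillation from a ball of radius $\sim\rho(x_B)$ down to $4B$ and then applying Schur's lemma. (One small imprecision: the relevant reference for the $f_2$ piece is $\mathsf J_{B,2}$ in Corollary~\ref{coro:compare-1}, which uses H\"older's inequality and only the time-derivative kernel, rather than $\mathbf J_{B,2}^{(1)}$ in Theorem~\ref{thm:endpoint-HE}, which uses Poincar\'e together with third-order spatial derivatives of the kernel.)

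The genuine gap is in your plan for \eqref{eqn:MaxReg-I}, the $\mathcal T_{\mathcal L}$ estimate. You explicitly invoke a pointwise Gaussian gradient bound
$$
|\nabla_x p_t^{\mathcal L}(x,y)|\lesssim t^{-(n+1)/2}e^{-c|x-y|^2/t}
$$
and the companion bound $|\nabla_x e^{-\tau\mathcal L}(1)(x)|\lesssim\tau^{-1/2}(\sqrt\tau/\rho(x))^{\delta_0}$. Neither of these is valid in the range $n/2<q<n$ that the theorem must cover: the available regularity from the DGMTZ estimates (Lemma~\ref{lem:heat-Schrodinger}) gives only H\"older continuity in $x$ of $t\partial_t\mathcal K_t(x,y)$, not a pointwise gradient bound on the kernel itself. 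Pointwise Gaussian gradient bounds for $p_t^{\mathcal L}$ genuinely require $V\in{\rm RH}_q$ with $q\geq n$; the paper's own Remark~\ref{rem:global-TL} makes exactly this point when explaining why the \emph{global} $\mathcal T_{\mathcal L}$ estimate in ${\mathbb L}^2(\mathbb R_+;{\rm BMO}_{\mathcal L})$ is obstructed at $q>n/2$ and only goes through at $q\geq n$. If your plan were correct as stated, it would also deliver the global estimate that the authors explicitly identify as open, which is a red flag.

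The missing ingredient is the Caccioppoli-type inequality of Proposition~\ref{prop:space-derivative}: using integration by parts and the identity $\mathcal L e^{-t\mathcal L}g=-\partial_t e^{-t\mathcal L}g$, one bounds
$$
\int_0^{r_B^2}\int_B|\nabla_x e^{-t\mathcal L}g|^2\,dx\,dt\lesssim\int_0^{(2r_B)^2}\int_{2B}\Bigl(|\partial_t e^{-t\mathcal L}g|\,|e^{-t\mathcal L}g|+\tfrac{1}{r_B^2}|e^{-t\mathcal L}g|^2\Bigr)dx\,dt,
$$
together with a variant subtracting a constant $c_0$ (needed to treat $f_3$). This replaces the unavailable gradient kernel bound by products of time-derivative and no-derivative kernel bounds, both of which hold for all $q>n/2$. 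The paper then derives \eqref{eqn:MaxReg-I} by combining this Caccioppoli inequality (applied to $f_1$, $f_2$, and $f_3$ separately, with $c_0=(f(\cdot,s))_{4B}$ and the comparison against $e^{\tau\Delta}$ for the $f_3$ term) with the $L^2$-boundedness of the Riesz transform $\nabla_x\mathcal L^{-1/2}$ and the spectral estimate \eqref{eqn:FC} — this is the content of Proposition~\ref{prop:Car-BMO-2}. Without this tool, your approach cannot close the argument in the regime $n/2<q<n$.
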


\smallskip
Note that  \eqref{eqn:MaxReg-II} and \eqref{eqn:MaxReg-I} do not involve the mean oscillation and present certain Carleson-type estimates for the external forces $f(x,t)$.

To prove Theorem \ref{thm:mainA}, we start by showing the following estimates motivated by \cite{JL}, which allow us to shift the derivative on the space variables to the time variable.

\smallskip

\begin{proposition}\label{prop:space-derivative}
Suppose $V \in \mathrm{RH}_q$ for some $q>n / 2$. Let $g \in L^2 \big(\mathbb{R}^n,(1+|x|)^{-(n+\beta)} dx\big )$ for some  $\beta>0$. Then for any ball $B=B(x_B,r_B)$, we have
\begin{equation}\label{eqn:space-der-1}
    \int_0^{r_B^2}\int_B \left|\nabla_x e^{-t \mathcal L} g\right|^2 dxdt\leq C\int_0^{(2r_B)^2}\int_{2B} \Big ( \left|\partial_t e^{-t \mathcal L} g\right|  \left |e^{-t \mathcal L} g\right| +\frac{1}{r_B^2} \left|e^{-t \mathcal L} g\right|^2 \Big )\,  dxdt.
\end{equation}
Moreover, for any constant $c_0\neq 0$,
\begin{align}\label{eqn:space-der-2}
	   & \int_0^{r_B^2}\int_B \left|\nabla_x e^{-t \mathcal L} g\right|^2 dxdt \notag \\
	   \leq \,& C\int_0^{(2r_B)^2}\int_{2B} \bigg ( \left|\partial_t e^{-t \mathcal L} g\right|  \left |e^{-t \mathcal L} g-c_0\right| +\frac{1}{r_B^2} \left|e^{-t \mathcal L} g-c_0\right|^2 +\left|e^{-t\mathcal L} g\right| \left|e^{-t\mathcal L} g-c_0\right|  V\bigg )\,  dxdt.
\end{align}
\end{proposition}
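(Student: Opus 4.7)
The plan is to prove both inequalities by a single Caccioppoli-type integration by parts, with different choices of ``test function'' for each. I would pick a spatial cutoff $\chi\in C_c^\infty(\mathbb R^n)$ with $\chi\equiv 1$ on $B$, $\operatorname{supp}\chi\subset 2B$, and $|\nabla\chi|\lesssim r_B^{-1}$, and set $u(x,t):=e^{-t\mathcal L}g(x)$. The key observation is that $\mathcal L u=-\partial_t u$ together with $\mathcal L=-\Delta+V$ yields the pointwise identity $\Delta u=\partial_t u+Vu$, and integrating by parts in $x$ at each fixed $t$ produces
$$
\int \chi^2|\nabla u|^2\,dx = -\int \chi^2 u\,\partial_t u\,dx - \int \chi^2 V u^2\,dx - 2\int \chi u\,\nabla\chi\cdot\nabla u\,dx.
$$

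For \eqref{eqn:space-der-1} I would drop the nonpositive term $-\int\chi^2 Vu^2\,dx$ (legal since $V\geq 0$) and tame the cross term by Young's inequality,
$$
\Bigl|2\int \chi u\,\nabla\chi\cdot\nabla u\,dx\Bigr|\ \leq\ \tfrac12\int \chi^2|\nabla u|^2\,dx + 2\int u^2|\nabla\chi|^2\,dx,
$$
absorbing the first piece on the left. This gives the pointwise-in-$t$ bound $\int_B|\nabla u|^2\,dx\lesssim \int_{2B}\bigl(|\partial_t u|\,|u|+r_B^{-2}|u|^2\bigr)\,dx$. Integrating $t\in(0,r_B^2)$ and enlarging the time interval to $(0,(2r_B)^2)$, which is permissible because the integrand is nonnegative, delivers \eqref{eqn:space-der-1}.

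For \eqref{eqn:space-der-2} I would run the same argument with the test function $u$ replaced by $u-c_0$, exploiting $\nabla u=\nabla(u-c_0)$. The corresponding identity
$$
\int\chi^2|\nabla u|^2\,dx=-\int\chi^2(u-c_0)\,\partial_t u\,dx-\int\chi^2(u-c_0)Vu\,dx-2\int(u-c_0)\chi\,\nabla\chi\cdot\nabla u\,dx
$$
contains an unsigned potential term $-\int\chi^2(u-c_0)Vu\,dx$ that must be retained, and it contributes precisely the new $|e^{-t\mathcal L}g|\,|e^{-t\mathcal L}g-c_0|\,V$ piece on the right-hand side of \eqref{eqn:space-der-2}. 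The same Young's inequality absorbs the $\nabla\chi$ cross term, and taking absolute values and integrating in $t$ as before yields \eqref{eqn:space-der-2}.

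The whole argument is essentially a routine Caccioppoli identity once the substitution $\Delta u=\partial_t u+Vu$ is in hand, so no substantial obstacle is expected. The only item requiring a bit of care is the \emph{justification} of the integration by parts (and of differentiating under the integral sign), which demands that $u,\nabla u,\partial_t u,Vu$ lie in suitable local $L^2$ classes. This is ensured by the growth hypothesis $g\in L^2\bigl(\mathbb R^n,(1+|x|)^{-(n+\beta)}\,dx\bigr)$ together with the Gaussian upper bounds on the kernel of $e^{-t\mathcal L}$ (Lemma~\ref{lem:heat-Schrodinger}), and can be made rigorous by approximating $g$ by compactly supported smooth functions and passing to the limit.
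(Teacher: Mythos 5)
Your proposal is correct and follows essentially the same Caccioppoli-type argument as the paper: a spatial cutoff, the substitution $\Delta u=\partial_t u+Vu$ via $\mathcal L u=-\partial_t u$, Young's inequality to absorb the $\nabla\chi$ cross term, dropping the signed potential term when $c_0=0$ and retaining it otherwise. The only cosmetic difference is that you integrate by parts slice-wise in $t$ while the paper uses a space-time test function $\varphi(x)\phi_\epsilon(t)$ and sends $\epsilon\to0^+$ (with a preliminary argument establishing the $W^{1,2}_{\mathrm{loc}}(\mathbb R_+^{n+1})$ regularity needed to justify the computation, which you correctly flag as the point requiring care).
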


\begin{proof}
The argument is similar to that of Proposition~5.2 in \cite{JL}. 
Since $g \in L^2 (\mathbb{R}^n,(1+|x|)^{-(n+\beta)} dx)$ for some  $\beta>0$, one may apply the heat estimates associted to $\mathcal L$ to see that  $e^{-t \mathcal L} g$ and  $\partial_t e^{-t \mathcal L} g$ are locally bounded. Moreover, for any compactly supported smooth function $\psi(x, t)$ on $\mathbb{R}_{+}^{n+1}$, combining integration by parts and  Young's inequality for products, we have
$$
\begin{aligned}
\iint_{\mathbb R_+^{n+1}}\left|\nabla_x e^{-t \mathcal L} g\right|^2 \psi^2 d x d t  =&\iint_{\mathbb R_+^{n+1}}\left[\left\langle\nabla_x e^{-t \mathcal L} g, \nabla_x\left(e^{-t \mathcal L} g \psi^2\right)\right\rangle-2\left\langle\nabla_x e^{-t \mathcal L} g, \nabla_x \psi\right\rangle \psi e^{-t \mathcal L} g\right] d x d t \\
\leq & \iint_{\mathbb R_+^{n+1}}  (\mathcal{L}  e^{-t \mathcal L} g)\cdot  (e^{-t \mathcal L} g \psi^2)\, d x d t-\iint_{\mathbb R_+^{n+1}} V |e^{-t \mathcal L} g|^2 \psi^2\, d x d t \\
& +\frac{1}{2} \iint_{\mathbb R_+^{n+1}}\left|\nabla_x e^{-t \mathcal L} g\right|^2 \psi^2 d x d t+2 \iint_{\mathbb R_+^{n+1}}\left|\nabla_x \psi\right|^2\left|e^{-t \mathcal L} g\right|^2 d x d t.
\end{aligned}
$$
Plugging the third term in the right hand back to the left,  noting that $\mathcal L  e^{-t \mathcal L}   g = -\partial_t e^{-t \mathcal L} g $ and the second term in the right hand is non-positive,
$$
\iint_{\mathbb R_+^{n+1}}\left|\nabla_x e^{-t \mathcal L} g\right|^2 \psi^2 d x d t \leq -2 \iint_{\mathbb R_+^{n+1}}  (\partial_t  e^{-t \mathcal L} g)\cdot  (e^{-t \mathcal L} g \psi^2)\, d x d t +4 \iint_{\mathbb R_+^{n+1}}\left|\nabla_x \psi\right|^2\left|e^{-t \mathcal L} g\right|^2 d x d t .
$$
This implies that $e^{-t \mathcal L} g \in W_{\text {loc }}^{1,2} (\mathbb R_+^{n+1})$.

Take a smooth funciton $\varphi$ on $\mathbb R^n$ with ${\rm supp\,}\varphi\subset 2B$ such that $\varphi=1$ on $B$ and $|\nabla_x \varphi|\leq C/{r_B}$.
For each $\epsilon \in (0, r_B^2)$, take a smooth function $\phi_\epsilon(t)$ on $\mathbb R$ such that $\operatorname{supp} \phi_\epsilon \subset (\epsilon, (2 r_B)^2)$ and  $\phi_\epsilon(t)=1$ on $(2 \epsilon, r_B^2)$. Without loss of generality, one may assume that $\|\varphi\|_{L^\infty}=\|\phi_\epsilon\|_{L^\infty}=1$.
The above argument shows that $(\varphi \phi_\epsilon)^2 e^{-t \mathcal L} g\in W^{1,2}(\mathbb R_+^{n+1})$ with ${\rm supp\,} (\varphi \phi_\epsilon)^2 e^{-t \mathcal L} g \subset 2B\times (\epsilon, (2r_B)^2)  $. Moreover, by integration by parts and $\mathcal L  e^{-t \mathcal L}  g = -\partial_t e^{-t \mathcal L} g $ again, for any given constant $c_0$,
\begin{align}\label{eqn:space-der-3}
	&\int_0^{(2r_B)^2}\int_{2B} \left|\nabla_x e^{-t \mathcal L} g\right|^2 (\varphi \phi_\epsilon)^2 dxdt \notag\\
	=&\int_0^{(2r_B)^2}\int_{2B} \left|\nabla_x \left(e^{-t \mathcal L} g-c_0\right)\right|^2 (\varphi \phi_\epsilon)^2 dxdt\notag\\
	=& \int_0^{(2r_B)^2}\int_{2B} \left\langle \nabla_x e^{-t \mathcal L} g, \nabla_x \left( (\varphi \phi_\epsilon)^2 \big (e^{-t \mathcal L} g-c_0\big ) \right)\right\rangle dxdt\notag\\
	& -2\int_0^{(2r_B)^2}\int_{2B} \left\langle   \nabla_x e^{-t \mathcal L} g , \nabla_x \varphi \right\rangle \varphi\phi_\epsilon^2 \big (e^{-t \mathcal L} g-c_0\big) \, dxdt\notag\\
	=& - \int_0^{(2r_B)^2}\int_{2B}  (\partial_t e^{-t \mathcal L} g) (\varphi\phi_\epsilon)^2 \big( e^{-t \mathcal L} g-c_0\big)\, dxdt -\int_0^{(2r_B)^2}\int_{2B} V e^{-t \mathcal L} g \big(e^{-t\mathcal L}g-c_0\big)  (\varphi\phi_\epsilon)^2 \, dxdt\notag\\
	& -2\int_0^{(2r_B)^2} \int_{2B} \left\langle   \nabla_x e^{-t \mathcal L} g , \nabla_x \varphi \right\rangle \varphi\phi_\epsilon^2 \big(e^{-t\mathcal L}g-c_0\big)  \, dxdt.
\end{align}

If $c_0\neq 0$. Using Young's inequality again, it follows from the assumption $\|\varphi\|_{L^\infty}=\|\phi_\epsilon\|_{L^\infty}=1$ that 
\begin{align*}
&\int_0^{(2r_B)^2}\int_{2B} \left|\nabla_x e^{-t \mathcal L} g\right|^2 (\varphi \phi_\epsilon)^2 dxdt \\
	\leq & \int_0^{ (2 r_B)^2}\int_{2B}  \left|\partial_t e^{-t \mathcal L} g\right| \left| e^{-t \mathcal L} g-c_0\right| dxdt  +\int_0^{(2r_B)^2}\int_{2B} V\left|e^{-t\mathcal L} g\right| \left|e^{-t\mathcal L} g-c_0\right| dxdt
	\\
	& +\frac{1}{2}\int_0^{(2r_B)^2 }\int_{2B} \left|\nabla _x e^{-t \mathcal L} g\right|^2 (\varphi\phi_\epsilon)^2  dxdt +2\int_0^{(2r_B)^2}\int_{2B} |\nabla_x \varphi|^2 \phi_\epsilon^2 \left|e^{-t \mathcal L} g-c_0\right|^2 dxdt,
\end{align*}
plugging the third term in the right hand back to the left, then
\begin{align*}
	&\int_0^{(2r_B)^2}\int_{2B} \left|\nabla_x e^{-t \mathcal L} g\right|^2 (\varphi \phi_\epsilon)^2 dxdt\\
	\leq\,& 2\int_0^{ (2 r_B)^2}\int_{2B}  \left|\partial_t e^{-t \mathcal L} g\right| \left| e^{-t \mathcal L} g-c_0\right| dxdt+2\int_0^{(2r_B)^2}\int_{2B} V\left|e^{-t\mathcal L} g\right| \left|e^{-t\mathcal L} g-c_0\right| dxdt\\
	&+4\int_0^{(2r_B)^2}\int_{2B} |\nabla_x \varphi|^2 \phi_\epsilon^2 \left|e^{-t \mathcal L} g-c_0\right|^2 dxdt,
\end{align*}
 hence \eqref{eqn:space-der-2} holds by noting $|\nabla_x \varphi|\leq C/{r_B}$ and  letting $\varepsilon\to 0+$, as desired.

In particular, if $c_0= 0$, it follows from \eqref{eqn:space-der-3} to see
\begin{align*}
   &\int_0^{(2r_B)^2}\int_{2B} \left|\nabla_x e^{-t \mathcal L} g\right|^2 (\varphi \phi_\epsilon)^2 dxdt \\
   \leq\,&  - \int_0^{(2r_B)^2}\int_{2B}  (\partial_t e^{-t \mathcal L} g) (\varphi\phi_\epsilon)^2  e^{-t \mathcal L} g\, dxdt-2\int_0^{(2r_B)^2} \int_{2B} \left\langle   \nabla_x e^{-t \mathcal L} g , \nabla_x \varphi \right\rangle \varphi\phi_\epsilon^2 e^{-t\mathcal L}g  \, dxdt.
 \end{align*}
 Using the same argument above, we obtain \eqref{eqn:space-der-1}.
\end{proof}

\smallskip

\begin{remark}\label{rem:homo-Carleson}
The constant $c_0\neq 0$ in \eqref{eqn:space-der-2} can be relaxed to any measurable function $c_0(t)\not \equiv 0$ on $(0,(2r_B)^2)$, since the whole argument of Proposition \ref{prop:space-derivative} only concerns $c_0$ itself and its space derivative.
On the other hand, the proof also allows us to relax the time interval in \eqref{eqn:space-der-1} and \eqref{eqn:space-der-2} from $(0,r_B^2)$ to  $(0,T)$ for any $T\leq \infty$, due to the fact that the proof does not involve the (time) derivative of the auxiliary function $\phi_\epsilon$. 
Similarly, when the time interval in the left-hand side of \eqref{eqn:space-der-1} and \eqref{eqn:space-der-2} is some $\big( (4^kr_B)^2, (4^{k+1}r_B)^2\big)$ for $k\geq 1$, then the time interval in the right-hand side can be chosen as $\big( (4^kr_B)^2-2r_B^2, (4^{k+1}r_B)^2+2r_B^2\big)$.
\end{remark}

\smallskip

As a consequence of Proposition~\ref{prop:space-derivative}, we have the following Carleson estimates. The proof is standard and we skip the details.

\begin{proposition}\label{prop:Carleson}
Suppose $V \in \mathrm{RH}_q$ for some $q>n / 2$. Let $g\in {\rm BMO}_{\mathcal L}(\mathbb R^n)$, we have
$$
    \sup_{B=B(x_B, r_B)}\,  r_B^{-n}\int_0^{r_B^2}\int_{B(x_B,r_B)} \left( \big|t\partial_t e^{-t \mathcal L} g(x)\big|^2 +\big|\sqrt{t}\nabla_x e^{-t \mathcal L} g(x)\big|^2 \right) \frac{dxdt}{t}\leq C \|g\|_{{\rm BMO}_\mathcal L}^2.
$$
\end{proposition}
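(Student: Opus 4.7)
The plan is to first reduce the spatial-derivative term to the temporal-derivative term via Proposition~\ref{prop:space-derivative}, and then establish the Carleson estimate on $t\partial_t e^{-t\mathcal L}g$ by a standard local/non-local decomposition of $g$ adapted to the ${\rm BMO}_{\mathcal L}$-framework. Fix $B = B(x_B, r_B)$ and set $c_B := g_{4B}$ when $r_B < \rho(x_B)$ and $c_B := 0$ when $r_B \geq \rho(x_B)$, and split $g = g_1 + g_2 + c_B$ with $g_1 := (g - c_B)\mathsf 1_{4B}$ and $g_2 := (g - c_B)\mathsf 1_{(4B)^c}$.

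For the temporal part: on $g_1$, the spectral square-function estimate $\int_0^\infty \|t\mathcal L e^{-t\mathcal L} h\|_{L^2}^2 \tfrac{dt}{t} \lesssim \|h\|_{L^2}^2$ (valid since $\mathcal L$ is nonnegative self-adjoint on $L^2(\mathbb R^n)$) combined with $\|g_1\|_{L^2}^2 \lesssim r_B^n \|g\|_{{\rm BMO}_{\mathcal L}}^2$ yields the Carleson bound. On $g_2$, the Gaussian upper bound on $|\partial_t k_t^{\mathcal L}(x,y)|$ (see Lemma~\ref{lem:heat-Schrodinger}) together with the telescoping annular sum used in the proof of Theorem~\ref{thm:endpoint-HE} delivers the bound. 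The constant piece is the crux: when $r_B \geq \rho(x_B)$ one has $c_B = 0$ and the term vanishes; when $r_B < \rho(x_B)$ one invokes the power-type decay of $|1 - e^{-t\mathcal L}(1)(x)|$ and $|t\partial_t e^{-t\mathcal L}(1)(x)|$ for $t < \rho(x)^2$ (available under ${\rm RH}_q$ with $q > n/2$), together with the slowly varying property of $\rho$ and the logarithmic control $|g_{4B}| \lesssim (1 + \log(\rho(x_B)/r_B))\|g\|_{{\rm BMO}_{\mathcal L}}$; the power gain in $r_B/\rho(x_B)$ dominates the logarithm.

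For the spatial-derivative part, apply Proposition~\ref{prop:space-derivative} with $c_0 = c_B$ (using \eqref{eqn:space-der-1} when $c_B = 0$ and \eqref{eqn:space-der-2} otherwise). Writing $u := e^{-t\mathcal L}g$, Young's inequality gives
\[
|\partial_t u|\,|u - c_B| \,\lesssim\, \frac{|t\partial_t u|^2}{t} + \frac{|u - c_B|^2}{t},
\]
and an analogous split applies to the $V$-term. Matters reduce to two Carleson-type estimates: the one on $|t\partial_t u|^2/t$ (already settled above) and the one on $|u - c_B|^2/t$, which admits exactly the same local/non-local decomposition. The $V$-weighted integrals are absorbed via the reverse H\"older bound $\int_{2B} V \, dy \lesssim r_B^{n-2}$ available when $r_B \lesssim \rho(x_B)$, whereas the complementary case $r_B \gtrsim \rho(x_B)$ is handled directly by the second (non-mean-subtracting) half of the ${\rm BMO}_{\mathcal L}$-norm.

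The main obstacle I anticipate is the treatment of the constant component $c_B = g_{4B}$ in the small-ball regime: unlike the classical heat semigroup, $e^{-t\mathcal L}$ does not preserve constants, so $\partial_t e^{-t\mathcal L}(c_B)$ does not vanish. It is exactly here that Shen's critical function $\rho$ and the ${\rm RH}_q$ assumption on $V$ enter in a nontrivial way, through power-type decay estimates that just beat the logarithmic growth of the BMO-averages.
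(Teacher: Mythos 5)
Your handling of the temporal part is correct and matches what the paper has in mind (spectral square function on $g_1$, Gaussian/annular decay on $g_2$, decay of $t\partial_t e^{-t\mathcal L}(1)$ plus the logarithmic BMO bound on $c_B$). However, your reduction of the spatial part via Young's inequality has a genuine gap.

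The issue is the intermediate quantity you produce. Writing $u = e^{-t\mathcal L}g$, the inequality
\[
|\partial_t u|\,|u - c_B| \,\lesssim\, t\,|\partial_t u|^2 + \frac{|u - c_B|^2}{t}
\]
is fine as pointwise algebra, but after integrating over $(0,(2r_B)^2)\times 2B$ the second summand is
\[
\int_0^{(2r_B)^2}\int_{2B}\frac{|e^{-t\mathcal L}g(x)-c_B|^2}{t}\,dx\,dt,
\]
and this integral is \emph{infinite} for generic $g\in\mathrm{BMO}_{\mathcal L}$: as $t\to 0$, $e^{-t\mathcal L}g\to g$ pointwise a.e., so the integrand behaves like $|g(x)-g_{4B}|^2/t$, whose $t$-integral diverges whenever $g$ is not a.e. equal to $g_{4B}$ on $2B$. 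The quantity $\tfrac{|u-c_B|^2}{t}\,dx\,dt$ is \emph{not} a Carleson measure comparable to $\|g\|^2_{{\rm BMO}_{\mathcal L}}$ (unlike $t|\partial_t u|^2\,dx\,dt$), and no choice of the Young parameter (constant in $t$ or scaling like a power of $t$) fixes this, because one can also show that $\int_0^{r_B^2}\int_B|\partial_t u|^2\,dx\,dt$ diverges for rough BMO data. So "matters reduce to two Carleson-type estimates" is exactly where the argument breaks: the second Carleson-type estimate is false.

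The fix --- and this is the route the paper implicitly takes, as one sees clearly in the proof of Proposition~\ref{prop:Car-BMO-2} --- is to split $g=g_1+g_2+c_B$ \emph{before} invoking Proposition~\ref{prop:space-derivative}, and then \emph{not} touch $g_1$ with the Caccioppoli inequality at all. Concretely: for the local piece $g_1$ one bounds $\nabla_x e^{-t\mathcal L}g_1 = (\nabla_x\mathcal L^{-1/2})\,\sqrt{\mathcal L}\,e^{-t\mathcal L}g_1$, uses Shen's $L^2$-boundedness of $\nabla_x\mathcal L^{-1/2}$ under $V\in{\rm RH}_{n/2}$, and then the spectral square function \eqref{eqn:FC} applied to $\sqrt{t\mathcal L}\,e^{-t\mathcal L}$, giving the Carleson bound directly without ever producing $\tfrac{|u-c_B|^2}{t}$. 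Proposition~\ref{prop:space-derivative} is reserved for $g_2$ (with $c_0=0$) and for the constant piece $c_B$ (with $c_0=c_B$), where the support separation resp. the decay of $e^{-t\mathcal L}(1)-1$ and $\partial_t e^{-t\mathcal L}(1)$ furnish an extra positive power of $t$ in the \emph{pointwise} heat-kernel bounds; after that, each term on the right-hand side of \eqref{eqn:space-der-1}/\eqref{eqn:space-der-2} converges outright and no Young or Cauchy--Schwarz split in $(x,t)$ is needed. Your reverse-H\"older absorption of the $V$-term is fine, but it too should be carried out after the $g_2$/$c_B$ decomposition, where you have the $t$-decay from the heat kernel; applied to the full $u$, the same convergence issue reappears.
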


\smallskip

Based on  Proposition~\ref{prop:Carleson}, one may furthermore obtain certain Carleson-type estimates involving the inhomogeneous terms of the equation \eqref{eqn:IVP-heat}.

\begin{proposition}\label{prop:Car-BMO-2}
	Suppose $V \in \mathrm{RH}_q$ for some $q>n / 2$. Let $f\in \widetilde{L^2}(\mathbb R_+;{\rm BMO}_{\mathcal L})$, we have
\begin{equation}\label{eqn:inhomo-Carleson-1}
    \sup_{B=B(x_B,r_B)}   r_B^{-n}\int_0^{r_B^2}\int_B \int_0^t \left|\nabla_x e^{-(t-s)\mathcal L}f(\cdot, s)(x)\right|^2 ds dxdt \leq C \|f\|_{\widetilde{L^2}(\mathbb R_+;{\rm BMO}_{\mathcal L})}^2.
\end{equation}
\end{proposition}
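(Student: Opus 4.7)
Fix $B=B(x_B,r_B)$. By Fubini's theorem and the substitution $\tau=t-s$, followed by a harmless extension of the inner time range from $(0,r_B^2-s)$ up to $(0,r_B^2)$, it suffices to estimate
\[
I:=r_B^{-n}\int_0^{r_B^2}\!\!\int_0^{r_B^2}\!\!\int_B\bigl|\nabla_x e^{-\tau\mathcal L}f(\cdot,s)(x)\bigr|^2\,dx\,d\tau\,ds.
\]
Mimicking the three-piece decomposition used in the proof of Theorem~\ref{thm:endpoint-HE}, I write $f(\cdot,s)=f_1(\cdot,s)+f_2(\cdot,s)+f_3(\cdot,s)$ where $f_1=(f(\cdot,s)-c_0(s))\mathsf 1_{4B}$, $f_2=(f(\cdot,s)-c_0(s))\mathsf 1_{(4B)^c}$, and $f_3\equiv c_0(s):=(f(\cdot,s))_{4B}$, splitting $I\leq I_1+I_2+I_3$ correspondingly.

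For $I_1$, the $L^2$-boundedness of the Riesz transform $\nabla\mathcal L^{-1/2}$ (valid under $V\in\mathrm{RH}_q$, $q>n/2$, by Shen's work) combined with the spectral Littlewood--Paley identity $\int_0^\infty\|\sqrt{\mathcal L}\,e^{-\tau\mathcal L}h\|_{L^2}^2\,d\tau\lesssim\|h\|_{L^2}^2$ yields $\int_0^{r_B^2}\!\int_B|\nabla_x e^{-\tau\mathcal L}f_1(\cdot,s)|^2\,dx\,d\tau\lesssim\|f_1(\cdot,s)\|_{L^2}^2$; integrating in $s$ and recognizing the right-hand side as the defining quantity of $\|f\|_{\widetilde{L^2}(\mathbb R_+;\mathrm{BMO}_{\mathcal L})}$ on $4B$ disposes of this piece. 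For $I_2$, I use the dyadic annular decomposition $(4B)^c=\bigsqcup_{l\geq 1}(4^{l+1}B\setminus 4^lB)$ together with far-field heat-kernel gradient estimates for $e^{-\tau\mathcal L}$ in the off-diagonal regime $|x-y|\gtrsim r_B$; the computation is essentially a verbatim adaptation of the treatment of $\mathbf I_{B,2}^{(0)}$ and $\mathbf I_{B,2}^{(k)}$ from Theorem~\ref{thm:endpoint-HE}, producing a convergent geometric series in the scale index.

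The main obstacle is $I_3$, because $e^{-\tau\mathcal L}$ no longer preserves constants and so $\nabla_x e^{-\tau\mathcal L}1\not\equiv 0$. Factoring out the scalar,
\[
I_3=r_B^{-n}\biggl(\int_0^{r_B^2}|c_0(s)|^2\,ds\biggr)\cdot\int_0^{r_B^2}\!\int_B\bigl|\nabla_x e^{-\tau\mathcal L}1(x)\bigr|^2\,dx\,d\tau,
\]
and the second factor is $\lesssim r_B^n$ by Proposition~\ref{prop:Carleson} applied to $1\in\mathrm{BMO}_{\mathcal L}(\mathbb R^n)$ (the two branches of its defining seminorm for $g=1$ reduce to $0$ and to a dimensional constant respectively). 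For the scalar factor $\int_0^{r_B^2}|c_0(s)|^2\,ds$, the case $r_B\geq\rho(x_B)$ follows from Jensen's inequality and the large-ball branch of the $\widetilde{L^2}$-norm on $4B$, using slow variation of $\rho$ to ensure $4r_B\gtrsim\rho(x_{4B})$. The case $r_B<\rho(x_B)$ is the hardest step: I pass to a critical ball $B^\ast\supset 4B$ with $r_{B^\ast}\sim\rho(x_{B^\ast})$ and write $c_0(s)=(f(\cdot,s))_{B^\ast}+\sum_j\bigl[(f(\cdot,s))_{4^jB}-(f(\cdot,s))_{4^{j+1}B}\bigr]$, a telescoping chain of length $O(\log(\rho(x_B)/r_B))$; the $L^2_s(0,r_B^2)$-norm of each BMO-chain term is controlled by the small-ball branch of $\|f\|_{\widetilde{L^2}(\mathbb R_+;\mathrm{BMO}_{\mathcal L})}$, while Jensen on $B^\ast$ with the large-ball branch handles the critical-ball average. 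Coordinating the two branches of the norm via Shen's critical function is precisely what upgrades the mean-oscillation control available at scale $B$ into the $L^2$-control required for the non-trivial constant piece.
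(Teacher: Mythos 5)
Your decomposition $f=f_1+f_2+f_3$ and the treatment of $I_1$ (Riesz transform plus the spectral square-function bound) match the paper's, so that piece is fine. But there are two genuine gaps in the remaining steps, both of which the paper navigates using Proposition~\ref{prop:space-derivative} (the Caccioppoli-type inequality), which you bypass.

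First, for $I_2$ you invoke ``far-field heat-kernel gradient estimates for $e^{-\tau\mathcal L}$'' and claim a verbatim adaptation of the $\mathbf I_{B,2}^{(0)}$, $\mathbf I_{B,2}^{(k)}$ computation from Theorem~\ref{thm:endpoint-HE}. That adaptation needs pointwise bounds on $\nabla_x \mathcal K_\tau(x,y)$, and these are not available under the standing hypothesis $V\in\mathrm{RH}_q$, $q>n/2$. The paper records exactly this obstacle in Remark~\ref{rem:global-TL}: pointwise gradient regularity of the Schr\"odinger heat kernel requires $q\geq n$. What Lemma~\ref{lem:heat-Schrodinger} does supply is pointwise Gaussian control of $\partial_t^m\mathcal K_\tau$, so the paper first trades $\nabla_x e^{-\tau\mathcal L}f_2$ for time derivatives via \eqref{eqn:space-der-1} and only then inserts kernel bounds. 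You should insert this reduction step rather than asserting gradient kernel bounds.

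Second, and more seriously, for $I_3$ you factor out the scalar and write $I_3 = r_B^{-n}\bigl(\int_0^{r_B^2}|c_0(s)|^2ds\bigr)\cdot\int_0^{r_B^2}\!\int_B|\nabla_x e^{-\tau\mathcal L}1|^2\,dx\,d\tau$, then bound the second factor by $r_B^n$ using Proposition~\ref{prop:Carleson} applied to $g\equiv 1$. This throws away all the $(\sqrt\tau/\rho(x_B))^{\delta}$ smallness available when $\tau\leq(2r_B)^2\ll\rho(x_B)^2$. With that gain discarded, you are left needing $\int_0^{r_B^2}|(f(\cdot,s))_{4B}|^2\,ds\lesssim\|f\|_{\widetilde{L^2}(\mathbb R_+;\mathrm{BMO}_{\mathcal L})}^2$ uniformly in $B$. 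But your own telescoping chain to the critical ball has $O(\log(\rho(x_B)/r_B))$ terms, each contributing $O(\|f\|)$ in $L^2_s(0,r_B^2)$, so the triangle inequality only gives $\|c_0\|_{L^2_s}\lesssim\log(\rho(x_B)/r_B)\,\|f\|$, and $I_3$ inherits the square of that logarithm — an unbounded estimate as $r_B/\rho(x_B)\to 0$. The paper avoids this precisely by keeping $f_3$ inside the operator: using \eqref{eqn:space-der-2} with $c_0(s)=(f(\cdot,s))_{4B}$ it produces the factors $\partial_t e^{-\tau\mathcal L}(1)\lesssim\tau^{-1}(\sqrt\tau/\rho(x_B))^\delta$ and $|e^{-\tau\mathcal L}f_3-e^{\tau\Delta}f_3|\lesssim(\sqrt\tau/\rho(x_B))^{2-n/q}|c_0(s)|$, and the crucial splitting $\lfloor\log_2(\rho(x_B)/r_B)\rfloor\,(\sqrt\tau/\rho(x_B))^\delta\lesssim(\sqrt\tau/r_B)^{\delta/2}(\sqrt\tau/\rho(x_B))^{\delta/2}$ absorbs the logarithm into a harmless negative power of $\rho(x_B)/r_B$. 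You need to retain this decay; factoring out the constant before applying a generic Carleson bound destroys it.
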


\smallskip

\begin{remark}
Note that by Proposition~\ref{prop:Carleson},
\begin{align*}
    {\rm LHS\, of\,   }\eqref{eqn:inhomo-Carleson-1} &= r_B^{-n} \int_0^{r_B^2} \int_B \int_s^{r_B^2} \left| \nabla_x e^{-(t-s) \mathcal L} f\,(\cdot,s)(x)\right|^2  dt  dxds\\
    & \leq   r_B^{-n}  \int_0^{r_B^2}  \int_0^{r_B^2} \int_B  \left| \nabla_x e^{-\tau \mathcal L} f\,(\cdot,s)(x)\right|^2   dx d\tau ds\\
    &\leq C \int_0^{r_B^2} \|f(\cdot, s)\|_{{\rm BMO}_{\mathcal L}}^2 ds.
\end{align*}
It's rougher than the aimed \eqref{eqn:inhomo-Carleson-1}, so we need a somewhat elaborate argument to prove it.

On the other hand, it follows from   H\"older's inequality to see
$$
    {\rm LHS\, of\,  } \eqref{eqn:MaxReg-I}\leq {\rm LHS\, of\, }\eqref{eqn:inhomo-Carleson-1} ,
$$
hence \eqref{eqn:MaxReg-I} in Theorem~\ref{thm:mainA} is a straighforward consequence of  Proposition~\ref{prop:Car-BMO-2}.
\end{remark}

To prove Proposition~\ref{prop:Car-BMO-2}, we introduce some preliminary and auxiliary results.

\begin{lemma} \label{lem:size-rho}
{\rm ( \cite[Lemma~1.4]{Shen1}.)}\
Suppose $V\in {\rm RH}_q$ for some $q> n/2$. There exist $c>1$ and $k_0\geq1$ such that for all $x,y\in\mathbb{R}^n$,
\begin{equation}\label{eqn:size-rho}
c^{-1}\left(1+\frac{|x-y|}{\rho(x)}\right)^{-k_0} \rho(x)\leq\rho(y)\leq c
\left(1+\frac{|x-y|}{\rho(x)}\right)^{\frac{k_0}{k_0+1}} \rho(x).
\end{equation}
In particular, $\rho(x)\approx \rho(y)$ when $y\in B(x, r)$ and $r\lesssim\rho(x)$.
\end{lemma}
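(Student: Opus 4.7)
The plan is to reduce the lemma to a monotonicity-like estimate for the quantity $m_V(x,r):=r^{2-n}\int_{B(x,r)} V(z)\,dz$, and then to compare $m_V(y,\cdot)$ and $m_V(x,\cdot)$ via elementary ball inclusions. The first task is to extract from $V\in\mathrm{RH}_q$ the basic inequality
\[
m_V(x, r)\ \leq\ C\Bigl(\frac{r}{R}\Bigr)^{\delta} m_V(x, R)\qquad \text{for } 0<r\leq R,
\]
with $\delta:=2-n/q>0$. This is obtained by applying Hölder's inequality to $\int_{B(x,r)}V$, then the reverse Hölder bound on $B(x,R)$ to pass from $V^{q}$ back to $V$, and finally rearranging the resulting factors of $|B(x,r)|/|B(x,R)|$. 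Since $V$ is locally integrable, $m_V(x,\cdot)$ is continuous; whenever $\rho(x)$ is finite (the only case of interest here), the definition of $\rho(x)$ and continuity force $m_V(x,\rho(x))=1$, so the display specializes to the usable form $m_V(x,R)\geq c (R/\rho(x))^{\delta}$ for all $R\geq \rho(x)$.

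Write $r=\rho(x)$, $t=|x-y|$. The elementary inclusion $B(y,R+t)\supset B(x,R)$, combined with $\int_{B(x,R)}V\geq c r^{-\delta} R^{n-2+\delta}$ for $R\geq r$ (a rewriting of Step~1 at $\rho(x)$), yields
\[
m_V(y, R+t)\ \geq\ c\, r^{-\delta}\,\frac{R^{n-2+\delta}}{(R+t)^{n-2}}, \qquad R\geq r.
\]
Identifying the smallest value of $R+t$ for which the right-hand side exceeds $1$ gives an upper bound for $\rho(y)$; performing this optimization by separating the regimes $R\leq t$ and $R>t$ and carefully tracking the powers of $r$ and $t$ in each regime produces
\[
\rho(y)\ \leq\ C\rho(x)\Bigl(1+\frac{|x-y|}{\rho(x)}\Bigr)^{k_0/(k_0+1)},
\]
for a constant $k_0\geq 1$ depending only on $n$ and $\delta$ (one may take $k_0=1+(n-2)/\delta$). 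Extracting the sharp exponent $k_0/(k_0+1)<1$, rather than the crude exponent $1$ that the same inequality yields without splitting into regimes, is the technical heart of the argument.

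The lower bound follows from the upper bound by symmetry. Exchanging the roles of $x$ and $y$ in the already-established upper bound gives $\rho(x)\leq C\rho(y)(1+t/\rho(y))^{k_0/(k_0+1)}$; solving for $\rho(y)$ by treating $t\leq \rho(y)$ and $t>\rho(y)$ separately (in the second regime $(1+t/\rho(y))$ is comparable to $t/\rho(y)$, and elementary algebra produces a power of $t$ with exponent $-k_0$) converts this into the stated bound $\rho(y)\geq c^{-1}\rho(x)(1+t/\rho(x))^{-k_0}$. The "in particular" assertion $\rho(x)\approx \rho(y)$ for $y\in B(x,r)$ with $r\lesssim \rho(x)$ is then immediate: in that range $1+|x-y|/\rho(x)$ stays bounded by an absolute constant, so both the upper and lower bounds collapse to $\rho(x)\approx \rho(y)$. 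The main obstacle I anticipate is the fine optimization in the second paragraph that produces the exponent $k_0/(k_0+1)$, where one must carefully identify which regime of $R$ yields the binding inequality and keep accurate track of the resulting powers of $r$ and $t$.
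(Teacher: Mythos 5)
The paper itself gives no proof of this lemma --- it is quoted from Shen \cite[Lemma~1.4]{Shen1} --- so the comparison point is Shen's argument. Your Step~1 is correct (the almost-monotonicity $m_V(x,r)\le C(r/R)^{\delta}m_V(x,R)$ for $r\le R$ with $\delta=2-n/q$, and $m_V(x,\rho(x))=1$), and the swap-and-solve algebra in your last paragraph is also correct. The genuine gap is in your second paragraph, precisely the step you flag as the technical heart. From the inclusion $B(y,R+t)\supset B(x,R)$ (with $t=|x-y|$) you can only certify lower bounds on $m_V(y,s)$ at radii $s=R+t\ge t$; hence the smallest radius at which your inequality forces $m_V(y,s)\gtrsim 1$ is comparable to $t+\rho(x)$, no matter how you split the regimes $R\le t$ and $R>t$ (in the first regime $R+t\ge t$, in the second $R+t>2t$). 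So this optimization yields only the crude bound $\rho(y)\le C(\rho(x)+|x-y|)$, i.e.\ exponent $1$, and it structurally cannot produce the exponent $k_0/(k_0+1)<1$, which for $t\gg\rho(x)$ asserts $\rho(y)\lesssim \rho(x)^{1/(k_0+1)}t^{k_0/(k_0+1)}\ll t$: that would require lower bounds on $\int_{B(y,s)}V$ at scales $s<t$, which no inclusion between balls centred at $x$ and balls centred at $y$ can supply.

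The actual route (Shen's) is the mirror image of yours: one first proves the exponent-$k_0$ inequality, i.e.\ the lower bound $\rho(y)\ge c\,(1+t/\rho(x))^{-k_0}\rho(x)$, directly. That inequality amounts to an \emph{upper} bound on $\int_{B(y,s)}V$ for small $s$, and it follows from $B(y,s)\subset B(x,s+t)$ together with the doubling property of the measure $V\,dx$ (a consequence of ${\rm RH}_q$), which gives $\int_{B(x,s+t)}V\lesssim \big(1+(s+t)/\rho(x)\big)^{\log_2 C_0}\rho(x)^{\,n-2}$, combined with your Step~1 centred at $y$; in particular $k_0$ depends on the doubling constant $C_0$ of $V\,dx$, equivalently on the ${\rm RH}_q$ constant. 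The sharp-exponent inequality is then deduced from the $k_0$-inequality by exactly the symmetry-plus-algebra you describe, applied in the opposite direction. Relatedly, your claim that one may take $k_0=1+(n-2)/\delta$, depending only on $n$ and $\delta$, is false: $V(x)=|x|^{N}$ belongs to ${\rm RH}_q$ for every $q$ and every $N>0$, yet $\rho(0)\approx 1$ while $\rho(y)\approx |y|^{-N/2}$ for large $|y|$, which forces $k_0\ge N/2$; so $k_0$ must be allowed to depend on the constant $C$ in \eqref{eqn:Reverse-Holder}, not only on $n$ and $q$.
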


\smallskip

\begin{lemma}\label{lem:heat-Schrodinger}
{\rm (see \cite{DGMTZ}.)} ~~ 
Suppose $V\in {\rm RH}_q$ for some $q> n/2$. Let ${\mathcal K}_t(x,y)$ be the kernel of the semigroup $\{e^{-t\mathcal L}\}_{t>0}$. For every $0<\delta<\min\Big\{1,2-\frac{n}{q}\Big\}$ and integer $m\geq 0$, there exist constants $c>0$  such that for every $N>0$ there is a constant $C_N$ such that 
 \begin{itemize}
\item[(i)]
$\displaystyle  \left| t^m \partial_t^m {\mathcal K}_t(x,y)\right|\leq C_N \, t^{-\frac{n}{2}}\exp\left(- \frac{|x-y|^2}{ct}\right)  \left(1+\frac{\sqrt t}{\rho(x)}+\frac{\sqrt t}{\rho(y)}\right)^{-N} $.

\item[(ii)] $\displaystyle \left| t^m\partial_t^m e^{-t \mathcal{L}}(1)(x)\right| \leq C \left(\frac{\sqrt{t}}{\rho(x)}\right)^\delta \left(1+\frac{\sqrt{t}}{\rho(x)}\right)^{-N}$ with $m\geq 1$.

\item[(iii)] for all $|h|\leq \sqrt{t}$,
$$
  \big| t \partial_t {\mathcal K}_t(x+h,y)-t \partial_t {\mathcal K}_t(x,y)\big|\leq C_N \,  \left(\frac{|h|}{\sqrt{t}}   \right)^\delta t^{-\frac{n}{2}}\exp\left(- \frac{|x-y|^2}{ct}\right)  \left(1+\frac{\sqrt t}{\rho(x)}+\frac{\sqrt t}{\rho(y)}\right)^{-N}. 
$$ 
 \end{itemize}
\end{lemma}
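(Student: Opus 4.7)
The plan is to combine the ordinary Gaussian upper bound for $e^{-t\mathcal L}$ with the extra decay supplied by the critical radius function $\rho$ via the reverse Hölder property, following the strategy of \cite{DGMTZ, Shen, Shen1}.

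For part (i), I would start from the ordinary Gaussian upper bound
\begin{equation*}
|\mathcal K_t(x,y)|\leq C t^{-n/2}\exp\left(-\frac{|x-y|^2}{ct}\right),
\end{equation*}
which follows from the Feynman–Kac formula together with $V\geq 0$ (so $e^{-t\mathcal L}$ is pointwise dominated by the free heat semigroup). The Schrödinger decay factor $(1+\sqrt t/\rho(x)+\sqrt t/\rho(y))^{-N}$ is then extracted via a Fefferman–Phong / Shen argument: when $\sqrt t\gtrsim\rho(x)$ the ${\rm RH}_q$ condition with $q>n/2$ forces $V$ to have substantial mass in $B(x,\sqrt t)$, producing additional exponential suppression in the kernel; iterating on dyadic annuli gives polynomial decay in $\sqrt t/\rho(x)$ of any prescribed order $N$. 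The derivative estimates $t^m\partial_t^m\mathcal K_t$ then follow from analyticity of the semigroup in a complex sector: represent $\partial_t^m e^{-t\mathcal L}$ by a Cauchy integral of $e^{-z\mathcal L}$ on the circle $|z-t|=t/2$, apply the kernel bound already obtained on this contour, and gain the factor $t^{-m}$ from the contour integration.

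For part (ii), write
\begin{equation*}
e^{-t\mathcal L}(1)(x)-1=-\int_0^t e^{-s\mathcal L}(V)(x)\,ds,
\end{equation*}
so that $\partial_t e^{-t\mathcal L}(1)(x)=-e^{-t\mathcal L}(V)(x)$, and estimate the right-hand side by splitting the spatial integral into dyadic shells $\{y:\, 2^k\sqrt t\leq |x-y|<2^{k+1}\sqrt t\}$. On each shell, combine the Gaussian kernel bound from (i) with Hölder's inequality (permissible because ${\rm RH}_q$ self-improves to ${\rm RH}_{q+\varepsilon}$) and the defining inequality $\int_{B(x,r)}V(y)\,dy\leq r^{n-2}$ for $r\leq\rho(x)$; Lemma~\ref{lem:size-rho} extends such control to general scales via $\rho(y)\approx\rho(x)$ on reasonable balls. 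Balancing the reverse-Hölder exponent against the Schrödinger scaling yields the $(\sqrt t/\rho(x))^\delta$ growth for $\delta<\min\{1,2-n/q\}$, the restriction $\delta<2-n/q$ being exactly the room afforded by self-improvement. Higher-order derivatives follow by iterating this identity, or alternatively by analyticity applied to the identity $\partial_t^m e^{-t\mathcal L}(1)=-\partial_t^{m-1}e^{-t\mathcal L}(V)$.

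For part (iii), I would first establish the Gaussian gradient bound
\begin{equation*}
|\nabla_x\partial_t\mathcal K_t(x,y)|\lesssim t^{-1/2}\cdot t^{-1}\cdot t^{-n/2}\exp\left(-\frac{|x-y|^2}{ct}\right)\left(1+\tfrac{\sqrt t}{\rho(x)}+\tfrac{\sqrt t}{\rho(y)}\right)^{-N}
\end{equation*}
by applying Caccioppoli on parabolic cylinders to $\partial_t\mathcal K_t(\cdot,y)$, bootstrapped from the pointwise bounds in (i). The Hölder estimate then follows by interpolation: integrate the gradient bound along the segment from $x$ to $x+h$ when $|h|\leq\sqrt t$ to obtain the difference bound with exponent $1$, and interpolate against the uniform estimate on $t\partial_t\mathcal K_t$ from (i) to land at any exponent $\delta<\min\{1,2-n/q\}$. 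The main obstacle throughout is propagating the factor $(1+\sqrt t/\rho(x)+\sqrt t/\rho(y))^{-N}$ cleanly through each step: the various kernel/commutator approximations will disturb the base point of $\rho$, and I would rely on the slow-variation estimate of Lemma~\ref{lem:size-rho} to swap $\rho(x)$ with $\rho(y)$ (or $\rho$ at a nearby point) while absorbing the polynomial loss by slightly decreasing $N$ at each stage.
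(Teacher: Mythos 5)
The paper does not prove this lemma; it is stated as a quoted result attributed to \cite{DGMTZ}, so there is no internal proof to compare against. Your sketches for parts~(i) and~(ii) are consistent in spirit with the arguments in that reference and in the earlier work of Dziuba\'nski--Zienkiewicz: Feynman--Kac domination gives the Gaussian bound, a Fefferman--Phong/Shen iteration extracts the $(1+\sqrt t/\rho)^{-N}$ decay, the Cauchy-integral trick on a circle $|z-t|=t/2$ yields the $t^m\partial_t^m$ bounds, and the identity $\partial_t e^{-t\mathcal L}(1)=-e^{-t\mathcal L}(V)$, combined with ${\rm RH}_q$ self-improvement and the definition of $\rho$, gives part~(ii).

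Your plan for part~(iii), however, has a genuine gap. You propose to first establish a pointwise Gaussian bound on $\nabla_x\partial_t\mathcal K_t$ ``by Caccioppoli on parabolic cylinders, bootstrapped from (i),'' and then to obtain the H\"older bound by integrating along the segment from $x$ to $x+h$ and interpolating with (i). That gradient estimate is not available for $V\in{\rm RH}_q$ with only $q>n/2$: Caccioppoli gives $L^2$ control of $\nabla_x\partial_t\mathcal K_t$ on a cylinder, not a pointwise bound, and upgrading to a pointwise Gaussian gradient bound requires roughly $q\geq n$ --- exactly the threshold the paper itself flags in Remark~\ref{rem:global-TL} when explaining why $\mathcal T_{\mathcal L}$ is harder to treat globally. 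Moreover, if such a gradient bound did hold, your interpolation would produce H\"older exponent $\delta=1$ independently of $q$, contradicting the restriction $\delta<2-n/q$ in the statement; the presence of $2-n/q$ is already a signal that your mechanism is not the right one. The correct source of that exponent is the perturbation (Duhamel) identity
$$
  \mathcal K_t(x,y)=h_t(x-y)-\int_0^t\int_{\mathbb R^n}\mathcal K_s(x,z)\,V(z)\,h_{t-s}(z-y)\,dz\,ds,
$$
differentiated once in $t$: the free Gaussian $h_t$ is smooth (so its contribution is Lipschitz), and the $x$-H\"older continuity of the integral term with exponent any $\delta<\min\{1,2-n/q\}$ comes from combining the Gaussian bounds already proved with the local $L^q$ integrability of $V$ supplied by ${\rm RH}_q$ and the Morrey-type bound $\int_{B(x,r)}V\lesssim r^{n-2}$ for $r\leq\rho(x)$. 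This perturbation-plus-bootstrap argument is what \cite{DGMTZ} and \cite{DZ2} use; you should replace the gradient-plus-interpolation step with it.
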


\smallskip

For $s>0$, we define
$$
\mathbb{F}(s):=\left\{\psi: \mathbb{C} \rightarrow \mathbb{C} \text { measurable }: \quad|\psi(z)| \leq C \frac{|z|^s}{\left(1+|z|^{2 s}\right)}\right\}.
$$
Then for any non-zero function $\psi \in \mathbb{F}(s)$, we have that $\left(\int_0^{\infty}|\psi(t)|^2 dt/t\right)^{1 / 2}<\infty$. Denote $\psi_t(z):=\psi(t z)$ for $t>0$. It follows from the spectral theory in \cite{Yo} that for any $g \in L^2\left(\mathbb{R}^n\right)$,
\begin{align}\label{eqn:FC}
 \int_0^{\infty}\|\psi(t \mathcal{L}) g\|_{L^2\left(\mathbb{R}^n\right)}^2 \frac{dt}{t}  & = \int_0^{\infty}\left\langle\bar{\psi}(t \mathcal{L}) \psi(t\mathcal{L}) g, g\right\rangle \frac{dt}{t}  
 = \left\langle\int_0^{\infty}|\psi|^2(t \mathcal{L}) \frac{dt}{t} g, g\right\rangle  \leq \kappa\|g\|_{L^2\left(\mathbb{R}^n\right)}^2,
\end{align}
where $\kappa= \int_0^{\infty}|\psi(t)|^2 dt / t$. The estimate will be useful in this article.

\smallskip

\begin{proof}[Proof of Proposition~\ref{prop:Car-BMO-2}]
For any given ball $B=B(x_B,r_B)\subset \mathbb R^n$, we split $f=f_1+f_2+f_3$ as in the  proof of Corollary~\ref{coro:compare-1}. 
Denote
\begin{align*}
\mathcal I_{B, j}:=    r_B^{-n}  \int_0^{r_B^2} \int_0^{r_B^2} \int_B  \left| \nabla_x e^{-\tau \mathcal L} f_j\,(\cdot,s)(x)\right|^2   dx d\tau ds,
\quad j=1,2,3,
\end{align*}
then
$$
 r_B^{-n}   \int_0^{r_B^2} \int_B \int_0^t \left|\nabla_x e^{-(t-s)\mathcal L}f(\cdot, s)(x)\right|^2 ds dxdt  \lesssim \sum_{j=1}^3  \mathcal I_{B, j} .
$$

Similar to the estimate on  $\mathbf I_{B,1}$ in Theorem~\ref{thm:endpoint-HE},
we apply the $L^2$-boundedness of the Riesz transform $\nabla_x \mathcal L^{-1/2}$ (see \cite[Theorem~0.4]{Shen1}) and \eqref{eqn:FC} to obtain
\begin{align*}
	\mathcal I_{B,1} &=  r_B^{-n}  \int_0^{r_B^2}  \int_0^{r_B^2} \int_{B} \left|\nabla_x \mathcal L^{-1/2}  \sqrt{\mathcal L} e^{-\tau \mathcal L} f_1(\cdot, s)(x)\right|^2 dxd\tau ds\\
	&\lesssim    r_B^{-n}  \int_0^{r_B^2}   \int_0^{r_B^2} \int_{\mathbb R^n} \left|  \sqrt{\tau\mathcal L} e^{-\tau\mathcal L} f_1(\cdot, s)(x)\right|^2  \frac{dxd\tau}{\tau} ds \\
	&\lesssim   r_B^{-n}  \int_0^{r_B^2}     \int_{4B}\left|f(x,s)-\left(f(\cdot,s)\right)_{4B}\right|^2 dx   ds\\
	&\lesssim \|f\|_{\widetilde{L^2}(\mathbb R_+;{\rm BMO}_{\mathcal L})}^2.
\end{align*}

Consider the second term $\mathcal I_{B,2}$. By \eqref{eqn:space-der-1} in  Proposition~\ref{prop:space-derivative},
\begin{equation}\label{eqn:J2-new}
	\mathcal I_{B,2} \lesssim    r_B^{-n}    \int_0^{r_B^2}  \int_0^{(2r_B)^2} \int_{2B}  \left( \left|\partial_t e^{-\tau \mathcal L} f_2(\cdot,s)\right|  \left|e^{-\tau \mathcal L} f_2(\cdot,s)\right| +\frac{1}{r_B^2} \left|e^{-\tau \mathcal L} f_2(\cdot,s)\right|^2 \right) dxd\tau ds.
\end{equation}

The following argument for $\mathcal I_{B,2}$ is similar to that of 
$ \mathbf I_{B,2}^{(0)}$ in the proof of Theorem~\ref{thm:endpoint-HE}. Concretely,
it follows from Lemma~\ref{lem:heat-Schrodinger}  that for any $x\in 2B$,  $0<\tau<(2r_B)^2$ and for fixed $s$,
\begin{align}\label{eqn:der-time-heat}
    \left|\partial_t e^{-\tau \mathcal L} f_2(\cdot,s) (x) \right|  & \lesssim  \sum_{j=1}^\infty \int_{4^{j+1}B\setminus {4^j B}} \frac{1}{|x-y|^{n+2}} \Big ( \left|f(y,s)-\left(f(\cdot, s)\right)_{4^{j+1}B}\right| +\left|\left(f(\cdot,s)\right)_{4^{j+1}B}-\left(f(\cdot, s)\right)_{4B}\right|\Big )\, dy.  \nonumber\\
  & \lesssim \frac{1}{r_B^2} \sum_{k=1}^\infty  \left( \sum_{j=k}^\infty   \frac{1}{4^{2j}} \cdot \frac{1}{|4^k B|} \int_{4^k B} \left|f(y,s)-\left(f(\cdot, s)\right)_{4^k B}\right| dy \right)\nonumber\\
    &\lesssim \frac{1}{r_B^2} \sum_{k=1}^\infty  \frac{1}{4^{2k}}\cdot \frac{1}{|4^k B|} \int_{4^k B} \left|f(y,s)-\left(f(\cdot, s)\right)_{4^k B}\right| dy.
\end{align}
Similarly,
$$
    \left|e^{-\tau \mathcal L} f_2(\cdot, s)(x)\right|\lesssim \frac{\tau}{r_B^2} \sum_{k=1}^\infty  \frac{1}{4^{2k}}\cdot \frac{1}{|4^k B|} \int_{4^k B} \left|f(y,s)-\left(f(\cdot, s)\right)_{4^k B}\right| dy.
$$
Thus
\begin{align}\label{eqn:I2-aux-new}
    &\left|\partial_t e^{-\tau \mathcal L} f_2(\cdot,s)(x)\right|  \left|e^{-\tau \mathcal L} f_2(\cdot,s)(x)\right| +\frac{1}{r_B^2} \left|e^{-\tau \mathcal L} f_2(\cdot,s)(x)\right|^2	\nonumber\\
    \lesssim\, &  \left(\frac{\tau}{r_B^4} +\frac{\tau^2}{r_B^6}\right)  \left( \sum_{k=1}^\infty  \frac{1}{4^{2k}}\cdot \frac{1}{|4^k B|} \int_{4^k B} \left|f(y,s)-\left(f(\cdot, s)\right)_{4^k B}\right| dy\right)^2  \nonumber\\
     \lesssim\, & \left(\frac{\tau}{r_B^4} +\frac{\tau^2}{r_B^6}\right)    \sum_{k=1}^\infty  \frac{1}{4^{2k}}\cdot \frac{1}{|4^k B|} \int_{4^k B} \left|f(y,s)-\left(f(\cdot, s)\right)_{4^k B}\right|^2 dy.
\end{align}
 
Hence,  we have by \eqref{eqn:J2-new} that
\begin{align*}
    \mathcal I_{B,2} &\lesssim r_B^{-n} \int_0^{r_B^2} 	\sum_{k=1}^\infty  \frac{1}{4^{2k}}\cdot \frac{1}{|4^k B|} \int_{4^k B} \left|f(y,s)-\left(f(\cdot, s)\right)_{4^k B}\right|^2 dyds\\
   &\lesssim  \|f\|_{\widetilde{L^2}(\mathbb R_+;{\rm BMO}_{\mathcal L})}^2 .
\end{align*}

It remains to estimate  $\mathcal I_{B,3}$, and we consider it in the following two cases.

\smallskip
\noindent{\it Case 1.} $4r_B\geq \rho(x_B)$. \  \
Combining  Lemma~\ref{lem:heat-Schrodinger} and \eqref{eqn:space-der-1} in Proposition~\ref{prop:space-derivative} and  again, we obtain that for  every $0<\delta<\min\Big\{1,2-\frac{n}{q}\Big\}$, 
\begin{align*}
	\mathcal I_{B,3}&\lesssim    r_B^{-n}\int_0^{r_B^2} \left|\left(f(\cdot, s)\right)_{4B}\right|^2 \int_0^{(2r_B)^2} \int_{2B}  \left( \left|\tau\partial_t e^{-\tau \mathcal L} (1)\right|  \left|e^{-\tau \mathcal L} (1)\right| +\frac{\tau}{r_B^2} \left|e^{-\tau \mathcal L} (1)\right|^2 \right) \frac{dxd\tau}{\tau} ds \\
	&\lesssim  r_B^{-n}  \int_0^{r_B^2}  \big|f(\cdot, s)\big|_{4B} ^2   \int_0^{(2r_B)^2} \int_{2B}   \left[\left(\frac{\sqrt{\tau}}{\rho(x)}\right)^{\delta }\left(1+\frac{\sqrt{\tau}}{\rho(x)}\right)^{-N}     +\frac{\tau}{r_B^2} \right]  \frac{dxd\tau}{\tau} ds  \\
	&\lesssim   r_B^{-n} \int_0^{r_B^2}   \frac{1}{|4B|} \int_{4B} |f(y,s)|^2  dy ds \cdot      \int_{2B}\left[\int_0^\infty  \frac{\nu^{\delta -1}}{(1+\nu)^N} d\nu   +1  \right] dx  \\
	&\lesssim   \|f\|_{\widetilde{L^2}(\mathbb R_+;{\rm BMO}_{\mathcal L})}^2.
\end{align*}

\smallskip

\noindent{\it Case 2.} $4r_B< \rho(x_B)$.  \  \
For any fixed $s$,
by \eqref{eqn:space-der-2} in Proposition~\ref{prop:space-derivative} and taking $c_0= \left(f(\cdot, s)\right)_{4B}$, one may combine the conservation property $e^{\tau\Delta }(1)= 1$ to obtain
\begin{align}\label{eqn:J3-new}
	\mathcal I_{B,3}\lesssim \, & r_B^{-n}\int_0^{r_B^2}    \int_0^{(2r_B)^2}\int_{2B}  \left|\partial_t e^{-\tau \mathcal L}  f_3(\cdot, s)(x)\right|  \left |e^{-\tau \mathcal L} f_3(\cdot, s) (x)-e^{\tau \Delta} f_3(\cdot, s)  (x)\right| dxd\tau ds \nonumber\\
	& +  r_B^{-n} \int_0^{r_B^2}    \int_0^{(2r_B)^2}\int_{2B}  \frac{1}{r_B^2} \left|e^{-\tau \mathcal L} f_3(\cdot, s) (x)-e^{\tau\Delta } f_3(\cdot, s)(x)\right|^2\,dxd\tau ds\nonumber\\
	& + r_B^{-n}\int_0^{r_B^2}    \int_0^{(2r_B)^2}\int_{2B}  \left|e^{-\tau \mathcal L} f_3(\cdot, s)(x)\right| \left|e^{-\tau \mathcal L} f_3(\cdot, s)(x)-e^{\tau \Delta }f_3(\cdot, s)(x)\right|  V\,  dxd\tau ds.
\end{align}

Note that  $4r_B<\rho(x_B)$, denote $\kappa=\lfloor \log_2 \frac{\rho(x_B)}{r_B} \rfloor +1$, where $\lfloor \cdot \rfloor$ denotes the floor function. Therefore,  we have $2^{\kappa }r_B>\rho(x_B)$,  and
\begin{align*}
     \left|\left(f(\cdot,s)\right)_{4B}\right| &\leq    \left|\left(f(\cdot,s)\right)_{2^{\kappa}B}\right| +  \sum_{j=2}^{\kappa -1}\left|\left(f(\cdot,s)\right)_{2^{j}B}-\left(f(\cdot,s)\right)_{2^{j+1}B}\right|.
\end{align*}
Let
\begin{equation}\label{MBs}
    \mathbf M_B(s)=\frac{1}{\kappa -1}\Bigg( \left|\left(f(\cdot,s)\right)_{2^{\kappa}B}\right| +  \sum_{j=2}^{\kappa -1}\left|\left(f(\cdot,s)\right)_{2^{j}B}-\left(f(\cdot,s)\right)_{2^{j+1}B}\right|\Bigg).
\end{equation}

This, combined with  (ii) of Lemma \ref{lem:heat-Schrodinger},
\begin{align}\label{eqn:der-time-heat-2}
  \left|\partial_t e^{-\tau \mathcal L} (f(\cdot,s))_{4B}(x)\right|&\lesssim   \tau^{-1}   \left|\left(f(\cdot,s)\right)_{4B}\right| \left(\frac{\sqrt{\tau}}{\rho(x_B)}\right)^\delta  \nonumber\\
  &\lesssim  \tau^{-1}
 \big \lfloor \log_2 \frac{\rho(x_B)}{r_B} \big\rfloor     \cdot \mathbf M_B(s)
  \cdot \left(\frac{\sqrt{\tau}}{\rho(x_B)}\right)^\delta \nonumber\\
  &\lesssim 
   \tau^{-1}\left(\frac{\sqrt{\tau}}{r_B}\right)^{\delta\over 2} \left(\frac{\sqrt{\tau}}{\rho(x_B)}\right)^{\delta\over 2}  \mathbf M_B(s) .
\end{align}

Denote by $h_\tau(x)$
 the kernel of the classical heat semigroup $\big\{e^{\tau\Delta}\big\}_{\tau>0}$.  Recall that 
there exists a nonnegative Schwartz  function $\varphi$ on $\mathbb R^n$ such that
\begin{equation*}
\big| h_\tau(x-y)-{\mathcal K}_\tau(x,y) \big|\leq\left(\frac{\sqrt{\tau}}{\rho(x)}\right)^{2-\frac{n}{q}}\varphi_\tau(x-y),\quad x,y\in\mathbb R^n,~\tau>0,
\end{equation*}
where $\varphi_\tau(x)=\tau^{-n/2}\varphi\big(x/\sqrt{t}\big)$; see  \cite[Proposition~2.16]{DZ2}.
This yields that 
\begin{align*}
   \left |e^{-\tau \mathcal L} f_3(\cdot, s)(x)- e^{\tau \Delta } f_3(\cdot, s)(x)\right| &\lesssim  \big\lfloor \log_2 \frac{\rho(x_B)}{r_B} \big\rfloor     \cdot \mathbf M_B(s) \cdot \left(\frac{\sqrt{\tau}}{\rho(x_B)}\right)^{2-\frac{n}{q}} 
\end{align*}
and  combining  Lemma \ref{lem:heat-Schrodinger} to see
\begin{align*}
   &\left|e^{-\tau\mathcal L} f_3(\cdot,s)(x)\right| \left|e^{-\tau\mathcal L} f_3(\cdot,s)(x)-e^{\tau\Delta }f_3(\cdot,s)(x)\right|\\
   \lesssim\, & \left( \big\lfloor \log_2 \frac{\rho(x_B)}{r_B} \big\rfloor     \cdot \mathbf M_B(s) \right)^2  \left(\frac{\sqrt{\tau}}{\rho(x_B)}\right)^{2-\frac{n}{q}}\\
    \lesssim \, &  \left(\frac{\sqrt{\tau}}{r_B}\right)^{2-\frac{n}{q}-\frac{\delta}{2}}\left(\frac{\sqrt{\tau}}{\rho(x_B)}\right)^{\delta\over 2} \big (\mathbf M_B(s)\big ) ^2 .
\end{align*}
This, allows us to use the definition \eqref{eqn:critical-funct} of the auxiliary function $\rho$ to see when $4r_B<\rho(x_B)$,
\begin{align*}
	&r_B^{-n}\int_{2B}  \left|e^{-\tau\mathcal L} f_3(\cdot,s)(x)\right| \left|e^{-\tau\mathcal L} f_3(\cdot,s)(x)-e^{\tau\Delta }f_3(\cdot,s)(x)\right|  V(x)\,  dx \\
	\lesssim \, &       \frac{1}{r_B^2}\left(\frac{\sqrt{\tau}}{r_B}\right)^{2-\frac{n}{q}-\frac{\delta}{2}}\left(\frac{\sqrt{\tau}}{\rho(x_B)}\right)^{\delta\over 2} \big(\mathbf M_B(s)\big) ^2.
\end{align*}
Plugging these estimates back to \eqref{eqn:J3-new}   and noting  $0<\delta<\min\Big\{1,2-\frac{n}{q}\Big\}$ to see
\begin{align*}
   \mathcal I_{B,3} \lesssim  \int_0^{r_B^2}   \left(\frac{r_B}{\rho(x_B)}\right)^{\delta\over 2} \big(  \mathbf M_B(s)\big) ^2   ds
   \lesssim  \int_0^{r_B^2}   \big(  \mathbf M_B(s)\big) ^2   ds
   \lesssim  \|f\|_{\widetilde{L^2}(\mathbb R_+;{\rm BMO}_{\mathcal L})}^2,
\end{align*}
where the last inequality follows from the Cauchy-Schwarz inequality.

From the above, we complete the proof.
\end{proof}

\medskip


\begin{remark}\label{rem:JB3}
Indeed, for any  ball $B=B(x_B,r_B)$ with $4r_B< \rho(x_B)$, the above argument yields 
\begin{align*}
    \mathcal I_{B,3} \lesssim   \left(\frac{r_B}{\rho(x_B)}\right)^{\delta\over 2}  \int_0^{r_B^2}   \big(  \mathbf M_B(s)\big) ^2   ds\lesssim   \left(\frac{r_B}{\rho(x_B)}\right)^{\delta\over 2} \|f\|_{\widetilde{L^2}(\mathbb R_+;{\rm BMO}_{\mathcal L})}^2
\end{align*}
and 
\begin{align*}
     \mathcal I_{B,3} &\lesssim  \left(\frac{r_B}{\rho(x_B)}\right)^{\delta\over 2} \int_0^{r_B^2}   \big|f(\cdot,s)\big|_{4B}^2 \, ds\\
      &\lesssim  \left(\frac{r_B}{\rho(x_B)}\right)^{\delta\over 2}   \left(\frac{\rho(x_B)}{r_B}  \right)^{n }  \frac{1}{|B(x_B,\rho(x_B))|} \int_0^{\rho(x_B)^2} \int_{B(x_B,\,\rho(x_B))} |f(x,s)|^2 dxds.
\end{align*}
Hence, for any $\theta\in [0,1]$, 
\begin{align}\label{eqn:aux-JB3}
    \mathcal I_{B,3}   &\leq  C  \left(\frac{r_B}{\rho(x_B)}\right)^{\delta\over 2} \|f\|_{\widetilde{L^2}(\mathbb R_+;{\rm BMO}_{\mathcal L})}^{2\theta}    \left(\frac{\rho(x_B)}{r_B}  \right)^{n(1-\theta) }   \left( \frac{1}{|B(x_B,\rho(x_B))|} \int_0^{\rho(x_B)^2}  \int_{B(x_B,\rho(x_B))} |f(x,s)|^2 dxds\right)^{1-\theta}\nonumber \\
    &=   C  \left(\frac{r_B}{\rho(x_B)}\right)^{\frac{\delta}{ 2} -n(1-\theta)} \|f\|_{\widetilde{L^2}(\mathbb R_+;{\rm BMO}_{\mathcal L})}^{2\theta} \left( \frac{1}{|B(x_B,\rho(x_B))|}\int_0^{\rho(x_B)^2} \int_{B(x_B,\,\rho(x_B))} |f(x,s)|^2 dxds\right)^{1-\theta}.
\end{align}
In particular, when $\frac{\delta}{ 2} -n(1-\theta)=0$, that is, when $0< \theta=1-\frac{\delta}{2n}<1$, we have 
$$
     \mathcal I_{B,3}   \leq  C\|f\|_{\widetilde{L^2}(\mathbb R_+;{\rm BMO}_{\mathcal L})}^{2-\frac{\delta}{n}} \left( \frac{1}{|B(x_B,\rho(x_B))|} \int_0^{\rho(x_B)^2} \int_{B(x_B,\,\rho(x_B))} |f(x,s)|^2 dxds\right)^{\frac{\delta}{2n}}
$$
whenever $4r_B< \rho(x_B)$.
This estimate is useful for   Corollary \ref{thm:mainB} below.
\end{remark}

\smallskip

Observe that the proof of  Proposition~\ref{prop:Car-BMO-2}, inspired by the approach of Theorem~\ref{thm:endpoint-HE}, depends crucially on pointwise bounds for heat kernels and their time derivatives associated to $\mathcal L$, which also allow us to obtain the following result analogously.

\begin{corollary}\label{coro:Car-BMO-3}
	Suppose $V \in \mathrm{RH}_q$ for some $q>n / 2$. Let $f\in \widetilde{L^2}(\mathbb R_+;{\rm BMO}_{\mathcal L})$, we have
$$
     r_B^{-n}\int_0^{r_B^2} \int_0^{r_B^2} \int_B  \left| t\mathcal L e^{-t \mathcal L} f\,(\cdot,s)(x)\right|^2   \frac{dx dt}{t} ds \leq C \|f\|_{\widetilde{L^2}(\mathbb R_+;{\rm BMO}_{\mathcal L})}^2.
$$

Consequently,
\begin{equation}\label{eqn:inhomo-Carleson-3}
    \sup_{B=B(x_B,r_B)}  r_B^{-n}\int_0^{r_B^2} \int_B \int_0^t \left|\sqrt{t-s} \mathcal L e^{-(t-s)\mathcal L}f(\cdot, s)(x)\right|^2 ds dxdt \leq C \|f\|_{\widetilde{L^2}(\mathbb R_+;{\rm BMO}_{\mathcal L})}^2.
\end{equation}
\end{corollary}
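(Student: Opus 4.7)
The plan is to mimic the proof of Proposition~\ref{prop:Car-BMO-2} with the essential simplification that the identity
$$
t\mathcal L e^{-t\mathcal L}=-t\partial_t e^{-t\mathcal L}
$$
lets me directly invoke the pointwise heat kernel bounds of Lemma~\ref{lem:heat-Schrodinger}, bypassing Proposition~\ref{prop:space-derivative}. I fix a ball $B=B(x_B,r_B)$ and split $f=f_1+f_2+f_3$ exactly as in the proof of Corollary~\ref{coro:compare-1}. Denote
$$
\mathcal J_{B,j}:=r_B^{-n}\int_0^{r_B^2}\int_0^{r_B^2}\int_B \bigl|t\mathcal L e^{-t\mathcal L} f_j(\cdot,s)(x)\bigr|^2\,\frac{dx\,dt}{t}\,ds,\qquad j=1,2,3.
$$

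For $\mathcal J_{B,1}$ I use the functional calculus estimate \eqref{eqn:FC} with $\psi(z)=z e^{-z}\in \mathbb{F}(1)$, which gives $\int_0^\infty \|t\mathcal L e^{-t\mathcal L} g\|_{L^2}^2\,\tfrac{dt}{t}\lesssim \|g\|_{L^2}^2$; then
$$
\mathcal J_{B,1}\lesssim r_B^{-n}\int_0^{r_B^2}\|f_1(\cdot,s)\|_{L^2}^2\,ds = r_B^{-n}\int_0^{r_B^2}\int_{4B}\bigl|f(x,s)-(f(\cdot,s))_{4B}\bigr|^2\,dx\,ds\lesssim \|f\|_{\widetilde{L^2}(\mathbb R_+;{\rm BMO}_{\mathcal L})}^2.
$$
For $\mathcal J_{B,2}$ I recycle the pointwise bound \eqref{eqn:der-time-heat} established during the proof of Proposition~\ref{prop:Car-BMO-2}: for $x\in 2B$ and $0<t<r_B^2$,
$$
\bigl|t\mathcal L e^{-t\mathcal L} f_2(\cdot,s)(x)\bigr|\lesssim \frac{t}{r_B^{2}}\sum_{k=1}^\infty \frac{1}{4^{2k}}\cdot\frac{1}{|4^kB|}\int_{4^kB}\bigl|f(y,s)-(f(\cdot,s))_{4^kB}\bigr|\,dy.
$$
Squaring, integrating $\int_0^{r_B^2}\tfrac{t^2}{r_B^4}\,\tfrac{dt}{t}=O(1)$, and using Cauchy--Schwarz on the series exactly as in \eqref{eqn:I2-aux-new} yields $\mathcal J_{B,2}\lesssim \|f\|_{\widetilde{L^2}(\mathbb R_+;{\rm BMO}_{\mathcal L})}^2$.

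The main work is $\mathcal J_{B,3}$, and I split into the two scale regimes, parallel to \emph{Case 1}--\emph{Case 2} of the proof of Proposition~\ref{prop:Car-BMO-2}. When $4r_B\geq\rho(x_B)$, Lemma~\ref{lem:heat-Schrodinger}(ii) bounds $|t\mathcal L e^{-t\mathcal L}(1)(x)|$ by an integrable profile in $\sqrt{t}/\rho(x)$ against $dt/t$, so $\mathcal J_{B,3}\lesssim r_B^{-n}\int_0^{r_B^2}|(f(\cdot,s))_{4B}|^2\,ds\lesssim \|f\|_{\widetilde{L^2}(\mathbb R_+;{\rm BMO}_{\mathcal L})}^2$. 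The hard part is the subcritical case $4r_B<\rho(x_B)$: here $(f(\cdot,s))_{4B}$ need not be small, so I replicate the telescoping decomposition \eqref{MBs} writing $|(f(\cdot,s))_{4B}|\leq (\kappa-1)\mathbf M_B(s)$ with $\kappa\approx \log_2(\rho(x_B)/r_B)$, and combine it with Lemma~\ref{lem:heat-Schrodinger}(ii) and $\rho(x)\approx\rho(x_B)$ on $2B$ from Lemma~\ref{lem:size-rho}. Writing $(\sqrt t/\rho(x_B))^{\delta}=(\sqrt t/r_B)^{\delta}(r_B/\rho(x_B))^{\delta}$ and absorbing the logarithm $(\kappa-1)$ into a fractional power $(r_B/\rho(x_B))^{\delta/2}\leq 1$ (using $\sup_{u>4}(\log_2 u)/u^{\delta/2}<\infty$), I obtain
$$
\bigl|t\mathcal L e^{-t\mathcal L} f_3(\cdot,s)(x)\bigr|\lesssim \mathbf M_B(s)\Bigl(\tfrac{\sqrt t}{r_B}\Bigr)^{\delta},
$$
and integration in $dt/t$ gives $\mathcal J_{B,3}\lesssim \int_0^{r_B^2}\mathbf M_B(s)^2\,ds\lesssim \|f\|_{\widetilde{L^2}(\mathbb R_+;{\rm BMO}_{\mathcal L})}^2$ via the Cauchy--Schwarz step already carried out in the proof of Proposition~\ref{prop:Car-BMO-2}.

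Finally the consequence \eqref{eqn:inhomo-Carleson-3} is a Fubini and change-of-variables argument: for $0<s<t<r_B^2$ set $\tau=t-s$, so $\tau\in(0,r_B^2-s)\subset(0,r_B^2)$; since $|\sqrt\tau \mathcal L e^{-\tau\mathcal L} f(\cdot,s)|^2=|\tau\mathcal L e^{-\tau\mathcal L} f(\cdot,s)|^2/\tau$, swapping the order of $s$ and $t$ integration and enlarging the $\tau$--range to $(0,r_B^2)$ shows that the left-hand side of \eqref{eqn:inhomo-Carleson-3} is dominated by the quantity bounded in the first estimate. I expect the delicate point to be the case $4r_B<\rho(x_B)$ of $\mathcal J_{B,3}$, where controlling the logarithmic prefactor $(\kappa-1)$ from the telescoping average by an algebraic decay drawn from the extra smallness of $(r_B/\rho(x_B))$ provided by Lemma~\ref{lem:heat-Schrodinger}(ii) is what makes the whole estimate close.
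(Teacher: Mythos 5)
Your proof is correct and follows exactly the path the paper intends: the paper does not write out a proof of Corollary~\ref{coro:Car-BMO-3}, but declares it ``analogous'' to Proposition~\ref{prop:Car-BMO-2}, and the central simplification you identify --- that $t\mathcal L e^{-t\mathcal L}=-t\partial_t e^{-t\mathcal L}$ has a direct pointwise bound from Lemma~\ref{lem:heat-Schrodinger}(i)--(ii), so Proposition~\ref{prop:space-derivative} is unnecessary --- is precisely what makes the $\mathcal L$-version easier than the $\nabla_x$-version. Your treatment of $\mathcal J_{B,1}$ via \eqref{eqn:FC} with $\psi(z)=ze^{-z}$, of $\mathcal J_{B,2}$ via \eqref{eqn:der-time-heat}, of $\mathcal J_{B,3}$ via Lemma~\ref{lem:heat-Schrodinger}(ii) split into $4r_B\ge\rho(x_B)$ and $4r_B<\rho(x_B)$ with the $(\kappa-1)\mathbf M_B(s)$ telescoping and log-absorption into $(r_B/\rho(x_B))^{\delta/2}$, and the Fubini/enlargement deduction of \eqref{eqn:inhomo-Carleson-3} are all exactly the moves the paper uses in the analogous places of the proofs of Proposition~\ref{prop:Car-BMO-2} and Theorem~\ref{thm:mainA}.
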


\smallskip

\begin{proof}[The proof of Theorem~\ref{thm:mainA}]
Note that \eqref{eqn:MaxReg-I} is a consequence of  Proposition~\ref{prop:Car-BMO-2} via  H\"older's inequality.  It suffices to show \eqref{eqn:MaxReg-II}.

Fix $B=B(x_B,r_B)$.
For any $y\in \mathbb R^n$ and $s\in (0,r_B^2)$, we split $f$ as in Corollary~\ref{coro:compare-1}.
Then
$$
     r_B^{-n}\int_0^{r_B^2}\int_B
	 \big|\mathcal M_{\mathcal L}  (f)(x,t)\big|^2 dxdt \lesssim \sum_{j=1}^3  r_B^{-n}\int_0^{r_B^2}\int_B
	 \big| \mathcal M_{\mathcal L} (f_j)(x,t)\big|^2 dxdt=:\sum_{j=1}^3 \mathcal {J}_{B,j}.
$$

Note that  $\mathcal M_{\mathcal L} $  is a bounded operator on $L^2(\mathbb R_+^{n+1})$ followed from de Simon's work.
Similar to the argument for  $\mathbf J_{B,1}$ in Theorem~\ref{thm:endpoint-HE},  we have
\begin{align*}
	 \mathcal J_{B,1}\lesssim  r_B^{-n} \iint_{\mathbb R_+^{n+1}} |f_1(y,s)|^2 dyds\leq  C    \|f\|_{\widetilde{L^2}(\mathbb R_+;{\rm BMO}_{\mathcal L})}^2.
\end{align*}

Next, by \eqref{eqn:der-time-heat}, 
\begin{align*}
	\mathcal J_{B,2}
	\leq\, & r_B^{-n}\int_0^{r_B^2}\int_B  t \int_{0}^{t}  \left| \mathcal L  e^{-(t-s)\mathcal L}f_2(\cdot, s)(x)\right|^2  ds dxdt \\
	\lesssim\, &  r_B^{-n}\int_0^{r_B^2}\int_B  t \int_{0}^{t}  \left|  \frac{1}{r_B^2} \sum_{k=1}^\infty  \frac{1}{4^{2k}}\cdot \frac{1}{|4^k B|} \int_{4^k B} \left|f(y,s)-\left(f(\cdot, s)\right)_{4^k B}\right| dy\right|^2  ds dxdt \\
	\lesssim\, &  \int_0^{r_B^2} \frac{t}{r_B^4}dt\cdot   \int_0^{r_B^2}     \sum_{k=1}^\infty    \frac{1}{4^{2k}} \frac{1}{|4^k B|} \int_{4^k B} \left|f(y,s)-\left(f(\cdot, s)\right)_{4^k B}\right|^2 dyds\\
	\lesssim \, &     \|f\|_{\widetilde{L^2}(\mathbb R_+;{\rm BMO}_{\mathcal L})}^2.
\end{align*}

It remains to estimate  $\mathcal J_{B,3}$, and we also consider it in the following two cases.

\smallskip

\noindent{\it Case 1.} $4r_B\geq \rho(x_B)$. \  \
In this case, we may rewrite
$$
    f_3(y,s)=\left(f(\cdot, s)\right)_{4 B}\mathsf 1_{4B} (y)\mathsf 1_{(0,\, r_B^2)}(s)+\left(f(\cdot, s)\right)_{4 B}\mathsf 1_{(4B)^c} (y)\mathsf 1_{(0,\, r_B^2)}(s)    =:f_{3,1}+f_{3,2},
$$
and the corresponding subterms of $\mathcal J_{B,3}$ are denoted by $\mathcal J_{B,3}^{(1)}$ and $\mathcal J_{B,3}^{(2)}$.

Using the $L^2(\mathbb R_+^{n+1})$ boundedness of  $\mathcal M_{\mathcal L}$  again,
\begin{align*}
    \mathcal J_{B,3}^{(1)} \lesssim   r_B^{-n} \iint_{\mathbb R_+^{n+1}} \big|f_{3,1}(x,s)\big|^2 dxds  \lesssim    \|f\|_{\widetilde{L^2}(\mathbb R_+;{\rm BMO}_{\mathcal L})}^2.
\end{align*}

On the other hand, for any $x\in B$ and $s<t$,
\begin{align*}
	  \left|\mathcal L e^{-(t-s)\mathcal L} f_{3,2}(\cdot,s)(x)\right| \lesssim \,&  \big|(f(\cdot, s))_{4B}\big| \int_{(4B)^c}  \frac{1}{(t-s)^{(n+2)/2}} \exp\left(-\frac{|x-y|^2}{c(t-s)}\right) dy\\
	  \lesssim \,& r_B^{-2} \big|f(\cdot, s)\big|_{4B} .
\end{align*}
Therefore,
$$
    \mathcal J_{B,3}^{(2)}\lesssim \int_0^{r_B^2}  \big|f(\cdot, s)\big|_{4B} ^2\, ds \lesssim \|f\|_{\widetilde{L^2}(\mathbb R_+;{\rm BMO}_{\mathcal L})}^2.
$$

\smallskip

\noindent{\it Case 2.} $4r_B< \rho(x_B)$.  \  \  It follows from \eqref{eqn:der-time-heat-2}  that
\begin{align*}
	\mathcal J_{B,3}\lesssim \,  &    r_B^{-n}\int_0^{r_B^2}\int_B
	 \bigg(\int_0^{t}   \left(\frac{\sqrt{t-s}}{r_B}\right)^\delta  \mathbf M_B(s)     \frac{ds}{t-s}\bigg)^2 dxdt \\
	 \lesssim \,  &  \int_0^{r_B^2}    \bigg(\int_0^{t}   \left(\frac{\sqrt{t-s}}{r_B}\right)^\delta  \mathbf M_B(s)     \frac{ds}{t-s}\bigg)^2   dt \\
	 =:\, &  \int_{\mathbb R_+} \left|\int_{\mathbb R_+} K(t,s)  \left(\mathbf M_B(s)\mathsf 1_{(0,\, r_B^2)}(s)\right) ds\right|^2 dt.
\end{align*}
where
$$
   K(t,s)=\frac{(t-s)^{\frac{\delta}{2}-1} }{r_B^\delta} \mathsf 1_{(0,\,r_B^2)}(s)\cdot  \mathsf 1_{(s,\,r_B^2)}(t).
$$
It's clear that
$$
    \sup_s \int_{\mathbb R_+} |K(t,s)|dt \lesssim 1 \quad \text{and }\quad  \sup_t \int_{\mathbb R_+} |K(t,s)|ds\lesssim 1.
$$
Hence by Schur's lemma again,
\begin{align*}
	\mathcal J_{B,3}\lesssim \int_{\mathbb R_+} \left(\mathbf M_B(s)\mathsf 1_{(0, \, r_B^2)}(s)\right)^2  ds  =    \int_0^{r_B^2} \left(\mathbf M_B(s)\right)^2  ds \lesssim   \|f\|_{\widetilde{L^2}(\mathbb R_+;{\rm BMO}_{\mathcal L})}^2.
\end{align*}

From the above,  \eqref{eqn:MaxReg-II} follows readily.

\smallskip

Finally, note that
$$
    u(x,t)=\int_0^t e^{-(t-s)\mathcal L}f(\cdot, s)(x)\, ds,
$$
then
$$
   \nabla_x u(x,t)=\int_0^t  \nabla_x e^{-(t-s)\mathcal L}f(\cdot, s)(x)ds=\mathcal T_{\mathcal L}  f(x,s)
$$
and
$$
 \partial_t u(x,t)=f(x,t)+\int_0^t  \partial_t e^{-(t-s)\mathcal L}f(\cdot, s)(x)\, ds=f(x,t)+\mathcal M_{\mathcal L} (f)(x,s).
$$
Hence we complete the proof of \eqref{eqn:MaxReg-III} by combining \eqref{eqn:MaxReg-I}  and \eqref{eqn:MaxReg-II} .
\end{proof}

\medskip

\begin{remark}\label{rem:J-new}
It's clear that  $\mathcal I_{B,3}$ in Proposition~\ref{prop:Car-BMO-2} and $\mathcal J_{B,3}$ are both  bounded from above by
$$   (4r_B)^{-n}\int_0^{(4r_B)^2}   \int_{4B} |f(x,s)|^2 dxds $$ 
for the case  $4r_B\geq \rho(x_B)$.

Now consider the case $4r_B<\rho(x_B)$.  It follows from  \eqref{eqn:der-time-heat-2} that
\begin{align*}
    \mathcal J_{B,3}\lesssim\, &     r_B^{-n}\int_0^{r_B^2}\int_B
	 \bigg(\int_0^{t}   \left(\frac{\sqrt{t-s}}{r_B}\right)^{\delta\over 2}  \left(\frac{\sqrt{t-s}}{\rho(x_B)}\right)^{\delta\over 2}   \mathbf M_B(s)     \frac{ds}{t-s}\bigg)^2 dxdt \\
	 \lesssim\, &  \left( \frac{r_B}{\rho(x_B)}\right)^{\delta}   \left( r_B^{-n}\int_0^{r_B^2}\int_B
	 \bigg(\int_0^{t}   \left(\frac{\sqrt{t-s}}{r_B}\right)^{\delta}   \mathbf M_B(s)     \frac{ds}{t-s}\bigg)^2 dxdt \right) .
\end{align*}
Then by the same argument above for proving  \eqref{eqn:MaxReg-II}, we have a certain sharper estimate that
$$
    \mathcal J_{B,3}\leq   C    \left(\frac{r_B}{\rho(x_B)}\right)^{\delta}     \int_0^{r_B^2}\left( \mathbf M_B(s)\right)^2  ds. 
$$ 
Similarly, using the first inequality in \eqref{eqn:der-time-heat-2}, we can also obtain
$$
    \mathcal J_{B,3} \leq C  \left(\frac{r_B}{\rho(x_B)}\right)^{2\delta } \int_0^{r_B^2}   \big|f(\cdot,s)\big|_{4B}^2 \, ds.
$$

Hence, $\mathcal J_{B,3}$ and $\mathcal I_{B,3}$  have  similar expressions of upper bounds  whenever $4r_B<\rho(x_B)$; see Remark~\ref{rem:JB3} for details.
\end{remark}

This remark is useful for further discussions on maximal regularity in $\widetilde{L^2}(\mathbb R_+; {\rm CMO}_{\mathcal L})$, which can be defined in terms of limiting behavior conditions, similar to that of ${\rm CMO}_{\mathcal L}$ in \cite{SW2022}.
Indeed, our argument for Theorem \ref{thm:mainA}, Remarks \ref{rem:JB3} and \ref{rem:J-new}, allows us to obtain the following local version of maximal regularity in ${\rm CMO}_{\mathcal L}$.

\begin{corollary}\label{thm:mainB}
Suppose $V \in \mathrm{RH}_q$ for some $q>n / 2$. For every $f\in  \widetilde{L^2}(\mathbb R_+;{\rm CMO}_{\mathcal L}(\mathbb R^n))$,  the estimates \eqref{eqn:MaxReg-II}, \eqref{eqn:MaxReg-I} and \eqref{eqn:MaxReg-III}  in Theorem \ref{thm:mainA} also hold. Additionally,
$$
	 \lim _{a \rightarrow 0}\sup _{B: r_{B} \leq a}  \,\mathcal{C}(f)_{B,j}
     =  \lim _{a \rightarrow \infty}\sup _{B: r_{B} \geq a}  \,\mathcal{C}(f)_{B,j}
     =   \lim _{a \rightarrow \infty}\sup _{B: B \subseteq \left(B(0, a)\right)^c}  \,\mathcal{C}(f)_{B,j}=0,\quad j=1,2,
$$
where
$$
 \mathcal{C}(f)_{B,1}=\bigg( r_B^{-n} \int_0^{r_B^2}\int_B
	 \big|\mathcal M_{\mathcal L}  (f)(x,t)\big|^2 dxdt\bigg)^{1/2}  
  \  \    {\rm and}\  \  
    \mathcal{C}(f)_{B,2}=\left( r_B^{-n} \int_0^{r_B^2}\int_B \big|\mathcal T_{\mathcal L}  (f)(x,t)\big|^2 \frac{dxdt}{t}\right)^{1/2}.
$$
\end{corollary}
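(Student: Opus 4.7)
The plan is that the three estimates \eqref{eqn:MaxReg-II}, \eqref{eqn:MaxReg-I}, and \eqref{eqn:MaxReg-III} follow at once from Theorem~\ref{thm:mainA} via the inclusion $\widetilde{L^2}(\mathbb R_+;{\rm CMO}_{\mathcal L}) \subset \widetilde{L^2}(\mathbb R_+;{\rm BMO}_{\mathcal L})$, so the real content is the three vanishing statements on $\mathcal{C}(f)_{B,j}$. For each such limit I would revisit the decomposition $f=f_1+f_2+f_3$ from the proof of Theorem~\ref{thm:mainA} and track how each resulting Carleson piece depends on the local ${\rm CMO}_{\mathcal L}$-data of $f$.

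The pieces attached to $f_1$ and $f_2$ are bounded in the proof of Theorem~\ref{thm:mainA} by weighted sums of the form $\sum_{k\ge 0} 4^{-\alpha k}\,\omega_{4^k B}(f)$ with $\alpha>0$, where $\omega_{4^k B}(f)$ denotes the time-integrated local ${\rm BMO}_{\mathcal L}$-quantity on $4^k B$. In any of the three limits ($r_B\to 0$, $r_B\to \infty$, or $B\subseteq B(0,a)^c$), I would apply the standard two-step CMO argument: fix $\epsilon>0$, pick $k_0$ so large that the tail $\sum_{k>k_0} 4^{-\alpha k}\|f\|^2_{\widetilde L^2(\mathbb R_+;{\rm BMO}_{\mathcal L})}<\epsilon$, and note that the finite head has only finitely many terms, each of which falls in the regime where the corresponding $\gamma_j(f)=0$ condition delivers smallness uniformly in $B$.

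For the $f_3$ piece I would combine the two complementary bounds isolated in Remarks~\ref{rem:JB3} and \ref{rem:J-new}: in the supercritical range $4r_B\ge \rho(x_B)$ the piece is dominated by the absolute ${\rm CMO}_{\mathcal L}$-quantity $(4r_B)^{-n}\int_0^{(4r_B)^2}\int_{4B}|f|^2\,dx\,ds$; in the subcritical range $4r_B<\rho(x_B)$ either the gain $(r_B/\rho(x_B))^{\delta/2}\|f\|^2_{\widetilde L^2(\mathbb R_+;{\rm BMO}_{\mathcal L})}$ or the interpolated bound \eqref{eqn:aux-JB3} with $\theta=1-\delta/(2n)$ is available. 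The limits $r_B\to \infty$ and $B\to \infty$ fall in the supercritical case and reduce to $\gamma_2(f)=0$ or $\gamma_3(f)=0$; in the limit $r_B\to 0$, either $\rho(x_B)$ becomes small (pushing $4B$ into the supercritical $\gamma_1$-regime) or $\rho(x_B)$ is bounded below, in which case the explicit gain $(r_B/\rho(x_B))^{\delta/2}$ drives the piece to $0$.

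The delicate step is the transitional regime $r_B\approx \rho(x_B)$, where neither the pure gain $(r_B/\rho(x_B))^\delta\|f\|^2$ nor the absolute ${\rm CMO}_{\mathcal L}$-quantity on $B$ alone is obviously small. The interpolation estimate \eqref{eqn:aux-JB3} is tailored precisely to this range: the choice $\theta=1-\delta/(2n)$ cancels the $r_B/\rho(x_B)$ factor and yields a bound in which the smallness of an absolute ${\rm CMO}_{\mathcal L}$-quantity on the critical ball $B(x_B,\rho(x_B))$ (guaranteed by the relevant $\gamma_j(f)=0$ condition) transfers uniformly to $\mathcal{C}(f)_{B,j}$, while the remaining power $2-\delta/n$ is absorbed by the finite $\widetilde L^2(\mathbb R_+;{\rm BMO}_{\mathcal L})$-norm.
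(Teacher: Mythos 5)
Your proposal follows essentially the same path the paper indicates: the estimates in Theorem~\ref{thm:mainA} transfer by inclusion, the weighted dyadic sums in the $f_1,f_2$ pieces are handled by the tail-plus-head two-step CMO argument, and the $f_3$ piece is controlled via Remarks~\ref{rem:JB3} and \ref{rem:J-new}, in particular the interpolated bound \eqref{eqn:aux-JB3} with $\theta=1-\delta/(2n)$. That is precisely the combination of ingredients the paper points to when it says the argument for Theorem~\ref{thm:mainA} together with these remarks yields the corollary.

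One claim in your case analysis is imprecise and should be corrected. You assert that the limits $r_B\to\infty$ and $B\to\infty$ ``fall in the supercritical case $4r_B\ge\rho(x_B)$.'' This is not forced: by Lemma~\ref{lem:size-rho}, $\rho$ can grow (sublinearly) at infinity, so one can perfectly well have $4r_B<\rho(x_B)$ with $r_B\to\infty$, or with $x_B\to\infty$ and $r_B$ bounded. The repair, however, is exactly the tool you already isolate: in that subcritical range the bound \eqref{eqn:aux-JB3} with $\theta=1-\delta/(2n)$ replaces the ball $B$ by the critical ball $B(x_B,\rho(x_B))$, which has radius $\rho(x_B)>4r_B\ge 4a$ in the $r_B\to\infty$ limit (so the large-radius $\gamma_2$-type condition applies), and which is itself far from the origin whenever $x_B\to\infty$ and $\rho(x_B)$ stays bounded (so the $\gamma_3$-type condition applies); if instead $\rho(x_B)\to\infty$ one is again in the large-radius regime. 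Similarly, for the $\gamma_3$ limit applied to the $f_1,f_2$ pieces you should spell out the standard split: when $r_B$ is bounded the enlargements $4^kB$, $k\le k_0$, stay in $B(0,a')^c$ for some $a'\to\infty$ with $a$, while when $r_B$ is large the $\gamma_2$-type smallness takes over. With these two clarifications your argument is complete and coincides with the paper's intended one.
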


\medskip


\section{Maximal regularity in ${\rm BMO}_{\mathcal L}$: a global version} \label{sec:global-BMOL}
\setcounter{equation}{0}

Eventually, based on the local estimates in the previous section,
now we will prove our second main result, namely Theorem \ref{thm:improved-mainA},  the analog associated to $\mathcal L$ of our Theorem \ref{thm:endpoint-HE}.

Observe that there is no mean oscillation structure when establishing our previous (locally) Carleson-type estimates in Theorem \ref{thm:mainA}. Our argumnet ensures that it's not necessary when the time interval is restricted to the scale compared to $r_B^2$.
When transferring to Theorem \ref{thm:improved-mainA}, the mean oscillation occurring in the left-hand side of  \eqref{eqn:MaxReg-II-classical} hints us to apply the argument of Theorem \ref{thm:endpoint-HE} to obtain an additional derivative, which will improve corresponding decay estimates by exploiting elaborate heat kernel analysis.

\begin{proof}[The proof of Theorem \ref{thm:improved-mainA}]

We consider \eqref{eqn:global-MaxReg-II} in the following two cases: $4r_B<\rho(x_B)$, $4r_B\geq \rho(x_B)$.

\smallskip

\noindent{\it Case 1. $4r_B<\rho(x_B)$}. In this case, we need to prove that 
$$
       r_B^{-n}\int_{\mathbb R_+}  \int_B  \left |\mathcal M_{\mathcal L}  (f) (x,t) -\left(\mathcal M_{\mathcal L}  (f) (\cdot,t)\right)_B \right|^2 dx dt
       \lesssim   \|f\|_{{\mathbb L}^2(\mathbb R_+; {\rm BMO}_{\mathcal L})}^2
$$
with the implicit constant independent of the ball $B$. 

We split $f=f_1+f_2+f_3$ as in the  Proof of Theorem~\ref{thm:endpoint-HE}, and let
$$
    \mathtt J_{B,j} :=   r_B^{-n}\int_{\mathbb R_+}  \int_B  \left |\mathcal M_{\mathcal L}  (f_j) (x,t) -\left(\mathcal M_{\mathcal L} ( f_j )(\cdot,t)\right)_B \right|^2 dx dt,\quad j=1,2,3.
$$

By  the $L^2(\mathbb R_+^{n+1})$ boundedness of $\mathcal M_{\mathcal L} $,
\begin{align*}
	 \mathtt J_{B,1}\lesssim  r_B^{-n} \iint_{\mathbb R_+^{n+1}} |f_1(y,s)|^2 dyds\leq    \|f\|_{{\mathbb L}^2(\mathbb R_+;{\rm BMO}_{\mathcal L})}^2.
\end{align*}

We are now starting to consider  $\mathtt J_{B,2}$.  Note that
$$
    \mathtt J_{B,2} :=   r_B^{-n}\int_{\mathbb R_+}  \int_B  \bigg|  \int_0^t  \left(\mathcal Le^{-(t-s)\mathcal L}f_2(\cdot, s)(x)-  \big(\mathcal Le^{-(t-s)\mathcal L}f_2(\cdot, s) \big)_B\right)ds \bigg|^2 dxdt,
$$
then same as the decomposition of the term $ \mathbf J_{B,2}$ in the proof of Theorem \ref{thm:endpoint-HE}, we rewrite
\begin{align*}
      \mathtt J_{B,2} &\lesssim r_B^{-n}  \int_{0}^{(8r_B)^2} \int_B  \bigg |  \int_0^t   \left(\mathcal Le^{-(t-s)\mathcal L}f_2(\cdot, s)(x)-  \big(\mathcal Le^{-(t-s)\mathcal L}f_2(\cdot, s) \big)_B\right) ds \bigg| ^2  dxdt \\
      &\quad +   r_B^{-n}  \int_{(8r_B)^2}^{\infty} \int_B  \bigg |  \int_{t-(4r_B)^2}^t \left( \mathcal Le^{-(t-s)\mathcal L}f_2(\cdot, s)(x)-  \big(\mathcal Le^{-(t-s)\mathcal L}f_2(\cdot, s) \big)_B \right) ds\bigg|^2 dxdt \\
          &\quad +   r_B^{-n}  \int_{(8r_B)^2}^{\infty} \int_B  \bigg |  \int_0^{t-(4r_B)^2} \left( \mathcal Le^{-(t-s)\mathcal L}f_2(\cdot, s)(x)-  \big(\mathcal Le^{-(t-s)\mathcal L}f_2(\cdot, s)\big)_B \right) ds\bigg|^2 dxdt \\
     &=:  \mathtt J_{B,2}^{(1)} +\mathtt J_{B,2}^{(2,1)}+\mathtt J_{B,2}^{(2,2)}.
\end{align*}

Note that
\begin{align*}
     \mathtt J_{B,2}^{(1)} \lesssim r_B^{-n}  \int_{0}^{(8r_B)^2} \int_B  \left |  \int_0^t    \mathcal Le^{-(t-s)\mathcal L}f_2(\cdot, s)(x) ds \right| ^2  dxdt ,
\end{align*}
then the argument of $\mathcal J_{B,2}$ in the proof of Theorem \ref{thm:mainA} yields that 
$$
     \mathtt J_{B,2}^{(1)}  \lesssim  \|f\|_{{\mathbb L}^2(\mathbb R_+;{\rm BMO}_{\mathcal L})}^2.
$$

Combining H\"older's inequality and Poincar\'e's inequality,
\begin{align*}
    \mathtt J_{B,2}^{(2,1)}  &= r_B^{-n+2} \int_{(8r_B)^2-(4r_B)^2 }^\infty \int_{\max\{(8r_B)^2,s\}}^{s+(4r_B)^2}    \int_B \left |   \mathcal Le^{-(t-s)\mathcal L}f_2(\cdot, s)(x)-  \big(\mathcal Le^{-(t-s)\mathcal L}f_2(\cdot, s)\big)_B \right|^2 dx  dtds\\
    &\lesssim r_B^{-n+4}  \int_{(4r_B)^2 }^\infty \int_{0}^{(4r_B)^2}     \int_B \left | \nabla_x e^{-\tau \mathcal L} \mathcal L f_2(\cdot, s)(x) \right|^2 dx  d\tau ds.
\end{align*}

Combining \eqref{eqn:space-der-1} in  Proposition~\ref{prop:space-derivative}, we have
\begin{align*}
       \mathtt J_{B,2}^{(2,1)} &\lesssim   r_B^{-n+4}  \int_{(4r_B)^2 }^\infty   \int_{0}^{(8r_B)^2}     \int_{2B}
    \Big ( \left|\mathcal L^2 e^{-\tau \mathcal L} f_2(\cdot, s)(x)\right|  \left |\mathcal Le^{-\tau \mathcal L} f_2(\cdot,s )(x)\right| +\frac{1}{r_B^2} \left|\mathcal Le^{-\tau \mathcal L} f_2(\cdot, s)(x)\right|^2 \Big )\,  dxd\tau  ds.
\end{align*}

Similar to the estimate on $\mathcal I_{B,2}$ in Theorem~\ref{thm:mainA}, one may apply Lemma~\ref{lem:heat-Schrodinger} to  obtain an analogous of \eqref{eqn:I2-aux-new} that for any $x\in 2B$,  $0<\tau<(8r_B)^2$ and for any fixed $s$,
\begin{align*}
&\left|\mathcal L^2 e^{-\tau \mathcal L} f_2(\cdot, s)(x)\right|  \left |\mathcal Le^{-\tau \mathcal L} f_2(\cdot,s )(x)\right| +\frac{1}{r_B^2} \left|\mathcal Le^{-\tau \mathcal L} f_2(\cdot, s)(x)\right|^2 \\
 \lesssim\,&  \frac{1}{r_B^6}      \sum_{k=1}^\infty  \frac{1}{4^{2k}}\cdot \frac{1}{|4^k B|} \int_{4^k B} \left|f(y,s)-\left(f(\cdot, s)\right)_{4^k B}\right|^2 dy.
\end{align*}
 This yields 
$$
     \mathtt J_{B,2}^{(2,1)}  \lesssim  \|f\|_{{\mathbb L}^2(\mathbb R_+;{\rm BMO}_{\mathcal L})}^2.
$$

For the term $\mathtt J_{B,2}^{(2,2)}$,  note that for  any $t>(8r_B)^2$, $0<s< t-(4r_B)^2$ and $x\in B$, then $t-s>(4r_B)^2$,  and thus one may apply (iii) of Lemma~\ref{lem:heat-Schrodinger} to obtain that for every $0<\delta<\min\left\{1,2-\frac{n}{q}\right\}$,
\begin{align*}
     &\left|\mathcal Le^{-(t-s)\mathcal L}f_2(\cdot, s)(x)-  \big(\mathcal Le^{-(t-s)\mathcal L}f_2(\cdot, s)\big)_B\right|\\
     \leq\, &\frac{1}{|B|}\int_B   \left|\mathcal Le^{-(t-s)\mathcal L}f_2(\cdot, s)(x)-  \mathcal Le^{-(t-s)\mathcal L}f_2(\cdot, s)(y)\right|dy\\
     \lesssim \, &\frac{1}{|B|}   \int_B    \int_{(4B)^c}      \left(\frac{r_B}{\sqrt{t-s}}\right)^\delta    \frac{1}{(t-s)^{\frac{n+2}{2}}} \exp\left(-\frac{|x-z|^2}{c(t-s)}\right)   \left| f_2(z,s)\right| dzdy\\
     \lesssim \, &    \left(\frac{r_B}{\sqrt{t-s}}\right)^\delta       \int_{(4B)^c}     \frac{1}{(t-s)^{\frac{\theta}{2}}}  \frac{1}{|x-z|^{n+2-\theta}}   \left| f_2(z,s)\right| dz \\
     \lesssim\, &  \left(\frac{r_B}{\sqrt{t-s}}\right)^\delta\frac{1}{(t-s)^{\frac{\theta}{2}}}        \sum_{j=1}^\infty \int_{4^{j+1}B\setminus 4^j B}        \frac{1}{|x-z|^{n+2-\theta}}   \bigg[ \left|f(z,s)-\left(f(\cdot, s)\right)_{4^{j+1}B}\right|\bigg.\\
     &\qquad \qquad \qquad \qquad \qquad
      +\bigg.\sum_{k=1}^j  \frac{1}{|4^{k+1}B|} \int_{4^{k+1}B} \left|f(u,s)-\left(f(\cdot, s)\right)_{4^{k+1}B}\right| du \bigg ] \,dz\\
     \lesssim\, &   \frac{r_B^ {\delta+\theta-2}  }   {  (t-s)^ {\frac{\delta+\theta}{2} } }
           \sum_{k=1}^\infty  \frac{1}{4^{(2-\theta)k}}\cdot \frac{1}{|4^k B|} \int_{4^k B} \left|f(u,s)-\left(f(\cdot, s)\right)_{4^k B}\right|^2 du
\end{align*}
holds for any $0<\theta <2$.  Now we fix some $\delta<\min\left\{1,2-\frac{n}{q}\right\}$ and then choose $0<\theta<2$ such that $\delta+\theta >2$, which is feasible even for the case $V\in {\rm RH}_{n/2}$, provided by the  self-improving property of the ${\rm RH}_q$ class: there exists $\varepsilon>0$
depending only on $n$ and the constant $C$ in \eqref{eqn:Reverse-Holder}, such that $V\in {\rm RH}_{q+\varepsilon}$; see \cite{G}. 

Denote
$$
    {\mathtt { Avg}}_B(s)=   \sum_{k=1}^\infty  \frac{1}{4^{(2-\theta)k}}\cdot \frac{1}{|4^k B|} \int_{4^k B} \left|f(y,s)-\left(f(\cdot, s)\right)_{4^k B}\right|^2 dy,
$$
we have 
\begin{align*}
   \mathtt J_{B,2}^{(2,2)}&\lesssim   r_B^ {2\cdot(\delta+\theta-2)}  \int_{(8r_B)^2}^{\infty}  \bigg| \int_0^{t-(4r_B)^2}  \frac{ 1}   {  (t-s)^ {\frac{\delta+\theta}{2} } }   {\mathtt { Avg}}_B(s)\, ds \bigg|^2 dt\\
   &=   r_B^ {2\cdot(\delta+\theta-2)}  \int_{\mathbb R_+}  \bigg| \int_{\mathbb R_+}   {\mathtt K}(t,s)  {\mathtt { Avg}}_B(s)\, ds \bigg|^2 dt,
\end{align*}
where 
$$
  {\mathtt K}(t,s)=\frac{ 1}   {  (t-s)^ {\frac{\delta+\theta}{2} } } \mathsf 1_{((8r_B)^2,\,+\infty)}(t)\cdot \mathsf 1_{(0,\,t-(4r_B)^2)}(s).
$$
Since $\delta+\theta>2$, then
$$
      \sup_s \int_{\mathbb R_+}  {\mathtt K}(t,s)\, dt\lesssim r_B^{2-(\delta+\theta)}\quad {\rm and}\quad \sup_t \int_{\mathbb R_+}  {\mathtt K}(t,s) \, ds\lesssim r_B^{2-(\delta+\theta)}.
$$
By Schur's lemma again, we have
$$
     \mathtt J_{B,2}^{(2,2)} \lesssim   \int_{\mathbb R_+}  \big(   {\mathtt { Avg}}_B(s) \big)^2 ds\lesssim \|f\|_{{\mathbb L}^2(\mathbb R_+;{\rm BMO}_{\mathcal L})}^2.
$$

\smallskip

It remains to estimate $\mathtt J_{B,3}$. Note that $e^{-t\mathcal L}1\neq 1$, there is no conservation law here, and one may combine arguments in the proof of Theorem \ref{thm:mainA} and some new observations, based on different scales of $t-s$. 

One may further assume that $8r_B<\rho(x_B)$ (the proof for the other case $\rho(x_B)\leq 8r_B$ is similar and simpler, we skip it), then we can  rewrite
\begin{align*}
    \mathtt J_{B,3} 
    \lesssim\, &   r_B^{-n}\int_{0}^{\rho(x_B)^2}    \int_B  \bigg |\int_0^t \left(  \mathcal Le^{-(t-s)\mathcal L}f_3(\cdot, s)(x) - \big(\mathcal Le^{-(t-s)\mathcal L}f_3(\cdot, s)\big)_B\right) ds  \bigg|^2 dx dt\\
    &+ r_B^{-n}\int_{\rho(x_B)^2}^\infty    \int_B  \bigg |  \int_{t-(4r_B)^2}^t     \left(  \mathcal Le^{-(t-s)\mathcal L}f_3(\cdot, s)(x) - \big(\mathcal Le^{-(t-s)\mathcal L}f_3(\cdot, s)\big)_B\right) ds  \bigg|^2 dx dt\\
    &+ r_B^{-n}\int_{\rho(x_B)^2}^\infty    \int_B  \bigg | \int_{t-\rho(x_B)^2}^{t-(4r_B)^2}  \left(  \mathcal Le^{-(t-s)\mathcal L}f_3(\cdot, s)(x) - \big(\mathcal Le^{-(t-s)\mathcal L}f_3(\cdot, s)\big)_B\right) ds  \bigg|^2 dx dt\\
    &+ r_B^{-n}\int_{\rho(x_B)^2}^\infty    \int_B  \bigg |\int_0^{t-\rho(x_B)^2} \left(  \mathcal Le^{-(t-s)\mathcal L}f_3(\cdot, s)(x) - \big(\mathcal Le^{-(t-s)\mathcal L}f_3(\cdot, s)\big)_B\right) ds  \bigg|^2 dx dt\\
  =:\, &   \mathtt J_{B,3} ^{(1)}+    \mathtt J_{B,3} ^{(2,1)}+ \mathtt J_{B,3} ^{(2,2)}+ \mathtt J_{B,3} ^{(2,3)}.
\end{align*}

Observe that 
$$
    \mathtt J_{B,3} ^{(2,1)}\lesssim    r_B^{-n}\int_{\rho(x_B)^2}^\infty    \int_B \bigg|  \int_{t-(4r_B)^2}^t   \mathcal Le^{-(t-s)\mathcal L}f_3(\cdot, s)(x) \, ds   \bigg|^2 dxdt.
$$
As shown in \eqref{eqn:der-time-heat-2} that 
$$
  \left|\partial_t e^{-(t-s) \mathcal L}f_3(\cdot, s)(x)\right|\leq C 
   \frac{1}{t-s}\left(\frac{\sqrt{t-s}}{r_B}\right)^{\delta}   \mathbf M_B(s) 
$$
for each $x\in B$,
where $\mathbf M_B(s)$ is given in \eqref{MBs}.  Note that $0<t-s<(4r_B)^2$ in the expression of  $\mathtt J_{B,3} ^{(2,1)}$, then  by  the argument for $\mathcal J_{B,3}$ (in the proof Theorem \ref{thm:mainA} for  the case $4r_B<\rho(x_B)$),  one may verify that   $\mathtt J_{B,3} ^{(2,1)}$ is  bounded by 
$\displaystyle  \int_{\mathbb R_+}   \big(  {\mathbf M}_B(s)  \big) ^2   ds  $. 
Therefore,
$$
   \mathtt J_{B,3} ^{(2,1)} \lesssim   \|f\|_{{\mathbb L}^2(\mathbb R_+;{\rm BMO}_{\mathcal L})}^2.
$$

Let's consider the terms $\mathtt J_{B,3} ^{(1)}$ and $\mathtt J_{B,3} ^{(2,2)}$. Note that  $0<t-s<\rho(x_B)^2$ therein and this allows us to estimate them in the same manner. For each $x\in B$, by \eqref{eqn:der-time-heat-2},
$$
\left|  \mathcal Le^{-(t-s)\mathcal L}f_3(\cdot, s)(x) - \big(\mathcal Le^{-(t-s)\mathcal L}f_3(\cdot, s)\big)_B\right| \lesssim \frac{1}{t-s}\left(\frac{\sqrt{t-s}}{\rho(x_B)}\right)^{\delta} \big \lfloor \log_2 \frac{\rho(x_B)}{r_B} \big\rfloor\cdot  \mathbf M_B(s) .
$$
On the other hand,  apply (iii) of Lemma~\ref{lem:heat-Schrodinger}  again to see

\begin{align*}
     &\left|\mathcal Le^{-(t-s)\mathcal L}f_3(\cdot, s)(x)-  \big(\mathcal Le^{-(t-s)\mathcal L}f_3(\cdot, s)\big)_B\right|\\
     \leq\, &   \big |f(\cdot, s)_{4B}\big|   \frac{1}{|B|}\int_B   \left|\mathcal Le^{-(t-s)\mathcal L}(1)(x)-  \mathcal Le^{-(t-s)\mathcal L} (1)(y)\right|dy\\
     \lesssim \, &  \big \lfloor \log_2 \frac{\rho(x_B)}{r_B} \big\rfloor     \cdot \mathbf M_B(s)     \int_{\mathbb R^n}      \left(\frac{r_B}{\sqrt{t-s}}\right)^\delta    \frac{1}{(t-s)^{\frac{n+2}{2}}} \exp\left(-\frac{|x-z|^2}{c(t-s)}\right)     dz\\
     \lesssim \, &  \frac{1}{t-s}\left(\frac{r_B}{\sqrt{t-s}}\right)^{\delta} \big \lfloor \log_2 \frac{\rho(x_B)}{r_B} \big\rfloor     \cdot \mathbf M_B(s)  .
\end{align*}
Combining them, we have 
\begin{align}\label{eqn:aux-JB3-new}
     \left|\mathcal Le^{-(t-s)\mathcal L}f_3(\cdot, s)(x)-  \big(\mathcal Le^{-(t-s)\mathcal L}f_3(\cdot, s)\big)_B\right|
     \lesssim \, &  \frac{1}{t-s}\left(\frac{\sqrt{t-s}}{\rho(x_B)}\right)^{\frac{2\delta}{3}} 
     \left(\frac{r_B}{\sqrt{t-s}}\right)^{\delta\over 3} \big \lfloor \log_2 \frac{\rho(x_B)}{r_B} \big\rfloor     \cdot \mathbf M_B(s)  \notag\\
      \lesssim \, &    \left(\frac{\sqrt{t-s}}{\rho(x_B)}\right)^{\delta\over 3}   \frac{1}{t-s}\mathbf M_B(s) .
\end{align}
and also
\begin{align*}
     \left|\mathcal Le^{-(t-s)\mathcal L}f_3(\cdot, s)(x)-  \big(\mathcal Le^{-(t-s)\mathcal L}f_3(\cdot, s)\big)_B\right|
     \lesssim \, &  \frac{1}{t-s}\left(\frac{r_B}{\rho(x_B)}\right)^{\delta\over 2} \big \lfloor \log_2 \frac{\rho(x_B)}{r_B} \big\rfloor     \cdot \mathbf M_B(s) \\
      \lesssim \, &  \left(\frac{r_B}{\rho(x_B)}\right)^{\delta\over 4} \frac{1}{t-s}\mathbf M_B(s) .
\end{align*}
Then
\begin{align*}
   \mathtt J_{B,3} ^{(1)} &\lesssim  \int_{0}^{\rho(x_B)^2}     \bigg|  \int_{0}^{t}  \left(\frac{\sqrt{t-s}}{\rho(x_B)}\right)^{\delta\over 3}   \frac{1}{t-s} {\mathbf M}_B(s)   \,ds  \bigg|^2  dt
\end{align*} 
and
\begin{align*}
   \mathtt J_{B,3} ^{(2,2)} &\lesssim  \left(\frac{r_B}{\rho(x_B)}\right)^{\delta\over 2} \int_{\rho(x_B)^2}^\infty     \bigg|  \int_{t-\rho(x_B)^2}^{t-(4r_B)^2}    \frac{1}{t-s} \mathbf M_B(s)   \, ds  \bigg|^2 dt.
\end{align*}
One may verify that by Schur's lemma, the operator ${T^{(1)}}({\mathbf {M}}_B)(t):=\int_{\mathbb R_+}    K^{(1)}(t,s) {\mathbf {M}}_B(s)\, ds$ with
$$ 
\quad  K^{(1)}(t,s)= \left(\frac{\sqrt{t-s}}{\rho(x_B)}\right)^{\delta\over 3}   \frac{1}{t-s} {\mathsf 1_{(0,\,\rho(x_B)^2)}(t)\cdot \mathsf 1_{ (0,\,t ) }(s)}
$$
is bounded from $L^2(\mathbb R_+)$ to $L^2(\mathbb R_+)$ with  norm $\displaystyle \left\|T^{(1)}\right\|_{L^2(\mathbb R_+)\to L^2(\mathbb R_+)}\lesssim 1 $. Therefore,
$$
   \mathtt J_{B,3} ^{(1)} \lesssim   \|f\|_{{\mathbb L}^2(\mathbb R_+;{\rm BMO}_{\mathcal L})}^2.
$$

Similarly, $T^{(2,2)}(\mathbf M_B)(t):=\int_{\mathbb R_+}    K^{(2,2)}(t,s) \mathbf M_B(s)\, ds $ with
$$
\quad  K^{(2,2)}(t,s)=\frac{1}{t-s} \mathsf 1_{(\rho(x_B)^2,\,+\infty)}(t)\cdot \mathsf 1_{ ( t-\rho(x_B)^2, \,t-(4r_B)^2 ) }(s)
$$
is bounded from $L^2(\mathbb R_+)$ to $L^2(\mathbb R_+)$ with  norm $\displaystyle \left\|T^{(2,2)} \right\|_{L^2(\mathbb R_+)\to L^2(\mathbb R_+)}\lesssim \ln \frac{\rho(x_B)}{r_B} $.  Then
\begin{align*}
   \mathtt J_{B,3} ^{(2,2)} &\lesssim  \left(\frac{r_B}{\rho(x_B)}\right)^{\delta\over 2}   \left(\ln \frac{\rho(x_B)}{r_B} \right)^2 \int_{\mathbb R_+} \left(\mathbf M_B(s)\right)^2   ds\lesssim \int_{\mathbb R_+} \left(\mathbf M_B(s)\right)^2   ds \lesssim   \|f\|_{{\mathbb L}^2(\mathbb R_+;{\rm BMO}_{\mathcal L})}^2.
\end{align*}

In the end we estimate $\mathtt J_{B,3} ^{(2,3)} $. Note that $t-s>\rho(x_B)$ here, thus combining Lemma \ref{lem:heat-Schrodinger} and the fact $\rho(x)\approx \rho(x_B)$ for $x\in B$ by Lemma \ref{lem:size-rho}, we can 
imporve the estimate \eqref{eqn:aux-JB3-new} to 
\begin{align*}
  \left|\mathcal Le^{-(t-s)\mathcal L}f_3(\cdot, s)(x)-  \big(\mathcal Le^{-(t-s)\mathcal L}f_3(\cdot, s)\big)_B\right|
     \lesssim \,  \frac{1}{t-s} \left(\frac{\sqrt{t-s}}{\rho(x_B)}\right)^{-N}  \mathbf M_B(s)
\end{align*}
for any $N\in \mathbb N_+$.
Hence it follows from Schur's lemma to obtain
$$
     \mathtt J_{B,3} ^{(2,3)} \lesssim   \|f\|_{{\mathbb L}^2(\mathbb R_+;{\rm BMO}_{\mathcal L})}^2.
$$

\smallskip

\noindent{\it Case 2. $4r_B\geq \rho(x_B)$}. In this case, we may further assume that $r_B\geq \rho(x_B)$ and it needs to prove that 
\begin{equation}\label{eqn:global-case2}
       r_B^{-n}\int_{\mathbb R_+}  \int_B  \left |\mathcal M_{\mathcal L}  f (x,t) \right|^2 dx dt
       \lesssim   \|f\|_{{\mathbb L}^2(\mathbb R_+; {\rm BMO}_{\mathcal L})}^2.
\end{equation}
Indeed,  if $4r_B\geq \rho(x_B)$ while $r_B< \rho(x_B)$, then 
\begin{align*}
       r_B^{-n}\int_{\mathbb R_+}  \int_B  \left |\mathcal M_{\mathcal L}  f (x,t) -\left(\mathcal M_{\mathcal L}  f (\cdot,t)\right)_B \right|^2 dx dt &\lesssim   r_B^{-n}\int_{\mathbb R_+}  \int_B  \left |\mathcal M_{\mathcal L}  f (x,t)   \right|^2 dx dt \\
        &\lesssim   (4r_B)^{-n}\int_{\mathbb R_+}  \int_{4B}  \left |\mathcal M_{\mathcal L}  f (x,t)   \right|^2 dx dt ,
\end{align*}
it also shifts to prove \eqref{eqn:global-case2} whenever $r_B\geq \rho(x_B)$.

Now, we split $f$ as 
$$
  f(\cdot, s)= f(\cdot, s)\mathsf 1_{4B}+  f(\cdot, s)\mathsf 1_{(4B)^c}=: \tilde{f}_1(\cdot, s)+\tilde{f}_2(\cdot, s),
$$
and let
$$
    \mathfrak J_{B,j} :=   r_B^{-n}\int_{\mathbb R_+}  \int_B  \left |\mathcal M_{\mathcal L}  \tilde{f}_j (x,t)  \right|^2 dx dt,\quad j=1,2.
$$

Apply the $L^2(\mathbb R_+^{n+1})$ boundedness of $\mathcal M_{\mathcal L} $ again,
$$
    \mathfrak J_{B,1} \lesssim r_B^{-n}  \left\|\tilde{f}_1\right\|_{L^2(\mathbb R_+^{n+1})}  \lesssim   \|f\|_{{\mathbb L}^2(\mathbb R_+;{\rm BMO}_{\mathcal L})}^2.
$$

Let's consider the term $ \mathfrak J_{B,2}$. Rewrite
\begin{align*}
    \mathfrak J_{B,2}& \lesssim  r_B^{-n}\int_{0}^{(8r_B)^2} \int_B \bigg|  \int_0^t \mathcal L e^{-(t-s)\mathcal L}  \tilde{f}_2(\cdot, s)(x)\, ds\bigg|^2 dxdt\\
  &\quad +  r_B^{-n}\int_{(8r_B)^2}^{\infty} \int_B \left| \int_{t-(4r_B)^2}^t   \mathcal L e^{-(t-s)\mathcal L}  \tilde{f}_2(\cdot, s)(x)\, ds\right|^2 dxdt\\
  &\quad +  r_B^{-n}\int_{(8r_B)^2}^{\infty} \int_B \left|  \int_0^{t-(4r_B)^2}  \mathcal L e^{-(t-s)\mathcal L}  \tilde{f}_2(\cdot, s)(x)\, ds\right|^2 dxdt\\
  &=:   \mathfrak J_{B,2}^{(1)}+ \mathfrak J_{B,2}^{(2,1)}+ \mathfrak J_{B,2}^{(2,2)}.
\end{align*}

Note that $t-s\lesssim r_B^2$ in the terms $ \mathfrak J_{B,2}^{(1)}$ and $ \mathfrak J_{B,2}^{(2,1)}$, one may apply H\"older's inequality directly to obtain
\begin{align*}
    \mathfrak J_{B,2}^{(1)}+  \mathfrak J_{B,2}^{(2,1)} &\lesssim r_B^{-n+2}\int_{0}^{(8r_B)^2} \int_B  \int_0^t \left| \mathcal L e^{-(t-s)\mathcal L}  \tilde{f}_2(\cdot, s)(x)\right|^2  ds dxdt\\
    &\quad + r_B^{-n+2}\int_{(8r_B)^2}^{\infty} \int_B  \int_{t-(4r_B)^2}^t \left| \mathcal L e^{-(t-s)\mathcal L}  \tilde{f}_2(\cdot, s)(x)\right|^2  ds dxdt\\
    &\lesssim r_B^{-n+2}\int_{\mathbb R_+} \int_B  \int_0^{(8r_B)^2} \left| \mathcal L e^{-\tau \mathcal L}  \tilde{f}_2(\cdot, s)(x)\right|^2   dx\tau ds.
\end{align*}

Since for each $x\in B$,
\begin{align*}
\left| \mathcal L e^{-\tau \mathcal L}  \tilde{f}_2(\cdot, s)(x)\right|&\lesssim \sum_{k=1}^\infty \int_{4^{k+1}B\setminus {4^k B}} \frac{1}{|x-y|^{n+2}} |f(y, s)|\,dy\\
& \lesssim  \frac{1}{r_B^2}      \sum_{k=1}^\infty \frac{1}{4^{2k}}     \frac{1}{|4^{k+1}B|}\int_{4^{k+1}B}  |f(y, s)| \, dy,
\end{align*}
then 
$$
    \mathfrak J_{B,2}^{(1)}+  \mathfrak J_{B,2}^{(2,1)} \lesssim \int_{\mathbb R_+}  \sum_{k=1}^\infty \frac{1}{4^{2k}}   \frac{1}{|4^{k+1}B|}\int_{4^{k+1}B}  |f(y, s)|^2 \, dy ds  \lesssim   \|f\|_{{\mathbb L}^2(\mathbb R_+;{\rm BMO}_{\mathcal L})}^2.
$$

It suffices to consider $\mathfrak J_{B,2}^{(2,2)} $.  By Lemmas  \ref{lem:heat-Schrodinger} and  \ref{lem:size-rho},  for each $x\in B$ and 
$0<s<t-(4r_B)^2$,  we have for the case $4r_B\geq \rho(x_B)$,
\begin{align*}
    \left| \mathcal L e^{-(t-s)\mathcal L}  \tilde{f}_2(\cdot, s)(x)\right| 
& \lesssim  \frac{1}{r_B}\frac{1}{\sqrt{t-s}}   \left(\frac{\sqrt{t-s}}{\rho(x)}\right)^{-N}   \sum_{k=1}^\infty \frac{1}{4^{k}}     \frac{1}{|4^{k+1}B|}\int_{4^{k+1}B}  |f(y, s)| \, dy\\
& \lesssim  \frac{1}{r_B}\frac{1}{(t-s)^{\frac{N+1}{2}}}   \left(\frac{r_B}{\rho(x_B)}\right)^{\frac{k_0 }{k_0+1}N} \rho(x_B)^N   \sum_{k=1}^\infty \frac{1}{4^{k}}     \frac{1}{|4^{k+1}B|}\int_{4^{k+1}B}  |f(y, s)| \, dy
 \end{align*}
for any $N\in \mathbb N_+$. Thus
\begin{align*}
     \mathfrak J_{B,2}^{(2,2)}\lesssim \int_{(8r_B)^2}^{\infty} \left|  \int_{\mathbb R_+} \mathfrak K(t,s) \tilde{\rm Avg}(s)\,ds  \right|^2 dt,
\end{align*}
where 
$$
   \mathfrak K(t,s)=  \frac{r_B^{\frac{k_0}{k_0+1}N} }{r_B} \frac{1}{(t-s)^{\frac{N+1}{2}}}     \rho(x_B)^{\frac{1}{k_0+1}N}   \mathsf 1_{((8r_B)^2,\,+\infty)}(t) \cdot\mathsf 1_{(0, \,t-(4r_B)^2)}(s)   
$$
and 
$$
   \tilde{\rm Avg}(s)=\sum_{k=1}^\infty \frac{1}{4^{k}}     \frac{1}{|4^{k+1}B|}\int_{4^{k+1}B}  |f(y, s)| \, dy.
$$
Fix $N\geq 2$ and note that $4r_B\geq \rho(x_B)$,
$$
   \sup_s \int_{\mathbb R_+}  \mathfrak K(t,s)\, dt =\frac{r_B^{\frac{k_0}{k_0+1}N} }{r_B}    \rho(x_B)^{\frac{1}{k_0+1}N}    \int_{(4r_B)^2 }^\infty  \frac{1}{\tau ^{\frac{N+1}{2}}}  d\tau\lesssim \left(\frac{\rho(x_B)}{r_B}\right)^{\frac{1}{k_0+1}N}\lesssim 1.
$$
and also $  {\displaystyle \sup_t} \int_{\mathbb R_+}  \mathfrak K(t,s)\, ds\lesssim 1$.
Hence the operator $T$ given by $ T(\tilde{\rm Avg})(t)=\int_{\mathbb R_+} \mathfrak K(t,s) \tilde{\rm Avg}(s)\,ds$ is bounded from $L^2(\mathbb R_+)$ to $L^2(\mathbb R_+)$ with norm $\|T\|_{L^2(\mathbb R_+)\to L^2(\mathbb R_+)}\lesssim 1$, and hence
$$
     \mathfrak J_{B,2}^{(2,2)} \lesssim \int_{\mathbb R_+}  \left(  \tilde{\rm Avg}(s) \right)^2 ds   
     \lesssim   \|f\|_{{\mathbb L}^2(\mathbb R_+;{\rm BMO}_{\mathcal L})}^2.
$$

From the above, we complete the proof of \eqref{eqn:global-MaxReg-II}, and \eqref{eqn:global-MaxReg-III} is a straightforward consequence.
\end{proof}

\smallskip

\begin{remark}\label{rem:global-TL}
It's interesting to consider whether or not we have the global maximal regularity 
$$
   	\big\|  t^{-1/2}\mathcal T_{\mathcal L} f \big\|_{{\mathbb L}^2(\mathbb R_+; {\rm BMO}_{\mathcal L})}\leq C \|f\|_{{\mathbb L}^2(\mathbb R_+; {\rm BMO}_{\mathcal L})}.
$$

Indeed, for the case $4r_B\geq \rho(x_B)$, this estimate also holds by a similar argument presented in our article. However, for the remaining case $4r_B<\rho(x_B)$, the left-hand side of the above estimate involves 
$$
       \left |\nabla_x e^{-(t-s)\mathcal L}f(\cdot, s)(x)-\big(\nabla_x e^{-(t-s)\mathcal L}f(\cdot, s)\big)_B  \right|
$$
for $x\in B$, 
and the lack of regularity on heat kernels prevents us from handling it whenever $V\in {\rm RH}_{q}$ for $q> n/2$. 

In particular, if restricting $V\in {\rm RH}_{q}$ for $q\geq n$, then the regularity on heat kernels will be good enough (see \cite[Lemma 3.8]{DYZ}) to process this problem.
\end{remark}

One may continue to consider the global-in-time ${\rm CMO}_{\mathcal L}$-maximal regularity, where ${{\mathbb L}^2}   (\mathbb R_+;{\rm CMO}_{\mathcal L}(\mathbb R^n))$ can be defined in terms of limiting behavior conditions in a similar manner.   
Using the details arguments presented in this section, we obtain the following characterization.

\begin{corollary}\label{thm:global-mainB}
Suppose $V \in \mathrm{RH}_q$ for some $q>n / 2$. For every $f\in  {{\mathbb L}^2}(\mathbb R_+;{\rm CMO}_{\mathcal L}(\mathbb R^n))$,  the estimates \eqref{eqn:global-MaxReg-II} and \eqref{eqn:global-MaxReg-III} in Theorem \ref{thm:improved-mainA} also hold. Additionally,
$$	 \lim _{a \rightarrow 0}\sup _{B: r_{B} \leq a}  \,\mathtt {C}(f)_{B}
     =  \lim _{a \rightarrow \infty}\sup _{B: r_{B} \geq a}  \,\mathtt {C}(f)_{B}
     =   \lim _{a \rightarrow \infty}\sup _{B: B \subseteq \left(B(0, a)\right)^c}  \,\mathtt {C}(f)_{B}=0,
$$
where
$$
 \mathtt {C}(f)_{B}=\bigg( r_B^{-n} \int_{\mathbb R_+}\int_B
	 \left|\mathcal M_{\mathcal L}  (f)(x,t)-\big(\mathcal M_{\mathcal L } (f)(\cdot,t)\big)_B   \right|^2 dxdt\bigg)^{1/2}  .
$$
\end{corollary}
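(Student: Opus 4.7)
The first claim---that \eqref{eqn:global-MaxReg-II} and \eqref{eqn:global-MaxReg-III} continue to hold for $f\in{\mathbb L}^2(\mathbb R_+;{\rm CMO}_{\mathcal L})$---is an immediate consequence of Theorem~\ref{thm:improved-mainA} and the inclusion ${\mathbb L}^2(\mathbb R_+;{\rm CMO}_{\mathcal L})\subset{\mathbb L}^2(\mathbb R_+;{\rm BMO}_{\mathcal L})$. The substance is the three vanishing limits for $\mathtt{C}(f)_B$. The plan is to reuse, piece-by-piece, the bounds already obtained in the proof of Theorem~\ref{thm:improved-mainA} for $\mathtt J_{B,1},\mathtt J_{B,2}^{(1)},\mathtt J_{B,2}^{(2,1)},\mathtt J_{B,2}^{(2,2)},\mathtt J_{B,3}^{(1)},\mathtt J_{B,3}^{(2,1)},\mathtt J_{B,3}^{(2,2)},\mathtt J_{B,3}^{(2,3)}$ in Case~1, and $\mathfrak J_{B,1},\mathfrak J_{B,2}^{(1)},\mathfrak J_{B,2}^{(2,1)},\mathfrak J_{B,2}^{(2,2)}$ in Case~2, and to re-express each of them as an explicit geometric combination of \emph{local} ${\rm CMO}_{\mathcal L}$-type data of $f$ attached to dilates of $B$.

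The first step is purely bookkeeping: one reads off from that proof that every piece of $\mathtt{C}(f)_B^2$ is dominated by a geometric series
\[
  C\sum_{k\geq 1}\lambda^{-k}\,\mathsf A_{4^kB}(f)^2,
\]
with $\lambda>1$, possibly multiplied by a beneficial factor $(r_B/\rho(x_B))^{\gamma}$ with $\gamma>0$ (such factors have already been isolated in Remarks~\ref{rem:JB3} and~\ref{rem:J-new}, in the Schur-lemma step for $\mathtt J_{B,3}^{(2,2)}$, and in the estimate of $\mathfrak J_{B,2}^{(2,2)}$). Here $\mathsf A_{Q}(f)^2$ denotes $|Q|^{-1}\int_{\mathbb R_+}\!\!\int_Q|f(x,s)-(f(\cdot,s))_Q|^2\,dxds$ when $r_Q<\rho(x_Q)$ and $|Q|^{-1}\int_{\mathbb R_+}\!\!\int_Q|f(x,s)|^2\,dxds$ otherwise; Lemma~\ref{lem:size-rho} is used to pass between the two regimes consistently on $B$ and its dilates. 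The Schur-type pieces ($\mathtt J_{B,2}^{(2,2)},\mathtt J_{B,3}^{(1)},\mathtt J_{B,3}^{(2,2)},\mathtt J_{B,3}^{(2,3)},\mathfrak J_{B,2}^{(2,2)}$) produce sums of the same form after integrating $\mathbf M_B(s)^2$ against a geometric tail in $k$.

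The second step is a standard head-tail split. For any $\varepsilon>0$, choose $N$ so that the tail $\sum_{k\geq N}\lambda^{-k}\,\|f\|_{{\mathbb L}^2(\mathbb R_+;{\rm BMO}_{\mathcal L})}^2<\varepsilon$; the finite head $\sum_{k<N}\lambda^{-k}\,\mathsf A_{4^kB}(f)^2$ is then controlled by the vanishing conditions defining ${\mathbb L}^2(\mathbb R_+;{\rm CMO}_{\mathcal L})$ applied at the enlarged radius $4^N r_B$ or in the enlarged ball $B(0,a-4^N r_B)^c$, which is still small ($r_B\leq a$), large ($r_B\geq a$), or far from the origin, uniformly in $B$ at the cost of replacing $a$ by an $N$-dependent but fixed constant multiple. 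Taking $a\to 0$, $a\to\infty$, or pushing $B$ to infinity then yields the three limits.

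The main obstacle is the $\gamma_2$-type limit ($r_B\to\infty$), which always forces $r_B\geq\rho(x_B)$, so the proof of Theorem~\ref{thm:improved-mainA} uses Case~2 together with Lemma~\ref{lem:size-rho} and the $N$-th order decay in Lemma~\ref{lem:heat-Schrodinger} to produce a factor $(\rho(x_B)/r_B)^{N/(k_0+1)}$ inside $\mathfrak J_{B,2}^{(2,2)}$. For the corollary, one needs to verify that the averages $|4^kB|^{-1}\int_{\mathbb R_+}\!\!\int_{4^{k+1}B}|f(y,s)|^2\,dy\,ds$ appearing after Schur's lemma are genuinely the quantities that go to zero in the large-scale ${\rm CMO}_{\mathcal L}$ limit; since $r_{4^kB}=4^k r_B\geq r_B\geq \rho(x_B)\simeq \rho(x_{4^kB})$, this matches precisely the $r_Q\geq \rho(x_Q)$ branch of $\mathsf A_{4^kB}$, and the head-tail argument above then completes the proof.
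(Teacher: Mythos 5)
The overall strategy you propose — expressing $\mathtt{C}(f)_B^2$ in terms of local ${\rm CMO}_{\mathcal L}$-type data attached to dilates of $B$ and then performing a head--tail split — is the natural one, and it does work cleanly for the pieces $\mathtt J_{B,1}$, $\mathtt J_{B,2}^{(1)}$, $\mathtt J_{B,2}^{(2,1)}$, $\mathtt J_{B,2}^{(2,2)}$, and for all of Case~2, where the proof of Theorem~\ref{thm:improved-mainA} genuinely produces sums of the form $\sum_{k\geq 1}\lambda^{-k}\mathsf A_{4^kB}(f)^2$ with $\lambda>1$. However, your first step ("every piece \ldots is dominated by a geometric series") is not accurate for the $\mathtt J_{B,3}$ pieces, and this is where the real work lies. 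For $\mathtt J_{B,3}^{(1)}$, $\mathtt J_{B,3}^{(2,1)}$, $\mathtt J_{B,3}^{(2,2)}$, $\mathtt J_{B,3}^{(2,3)}$, the Schur lemma yields a bound by $\int_{\mathbb R_+}(\mathbf M_B(s))^2\,ds$, and by Cauchy--Schwarz
\[
\int_{\mathbb R_+}(\mathbf M_B(s))^2\,ds\ \lesssim\ \frac{1}{\kappa-1}\sum_{j=2}^{\kappa}\mathsf A_{2^jB}(f)^2,
\qquad \kappa=\kappa(B)\sim \log_2\!\big(\rho(x_B)/r_B\big),
\]
which is an equal-weight arithmetic average over $\kappa(B)$ dyadic scales, not a geometric tail. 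Your claim that "the Schur-type pieces produce sums of the same form after integrating $\mathbf M_B(s)^2$ against a geometric tail in $k$" is therefore incorrect.

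This matters for the second step. In a genuine geometric series you can fix $N$ once and for all, kill the tail, and apply the CMO vanishing to the finitely many head terms; that is the engine of your argument. But in the $\mathbf M_B$-average, as $r_B\to 0$ (resp.\ as $B$ escapes to infinity) the number $\kappa(B)$ of scales grows without bound, so "the head" also grows with $B$ and there is no fixed $N$. Worse, the fraction of indices $j$ for which $2^jr_B$ sits in a fixed intermediate window $[a_1,a_2]$ (where the CMO conditions give nothing beyond $\lesssim\|f\|_{{\mathbb L}^2({\rm BMO}_{\mathcal L})}^2$) is $\sim \log_2(a_2/a_1)/\kappa(B)$, and the fraction of indices $j$ with $2^jr_B\geq a_0$ is $\sim \log_2(\rho(x_B)/a_0)/\log_2(\rho(x_B)/r_B)$; the latter does not go to $0$ uniformly over balls with $r_B\leq a$ when $\rho(x_B)$ is allowed to be unboundedly large. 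So one must either exploit the beneficial factors $(r_B/\rho(x_B))^{\gamma}$ (available for $\mathtt J_{B,3}^{(2,2)}$ and via Remark~\ref{rem:J-new}, but not obviously for $\mathtt J_{B,3}^{(1)}$, $\mathtt J_{B,3}^{(2,1)}$, $\mathtt J_{B,3}^{(2,3)}$) or match the argument precisely to the limiting conditions defining ${\mathbb L}^2(\mathbb R_+;{\rm CMO}_{\mathcal L})$, which the paper states only by analogy with \cite{SW2022} and which you have not made explicit. This is a genuine gap, not a bookkeeping omission. A secondary, more routine issue: for the $\gamma_3$-type limit you write $B(0,a-4^Nr_B)^c$, which degenerates when $r_B$ is large; you should first dispose of large $r_B$ via the $\gamma_2$-type limit before dilating $B$.
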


\vskip 1cm

 \noindent{\bf Acknowledgments.} Duong and Li are supported by ARC DP 220100285. Wu is  supported by  National Natural Science Foundation of China \# 12201002  and Anhui Natural Science Foundation of China \# 2208085QA03.

 \bigskip


\end{document}